\documentclass[10pt]{amsart}
\usepackage{amssymb,latexsym,pstricks}
\usepackage[normalem]{ulem}
\topskip  \usepackage{color}
\usepackage{orcidlink}
\usepackage{hyperref}
\usepackage{tensor}
\usepackage{enumitem}
\usepackage{mathabx}
\usepackage{comment}
\usepackage{amsfonts,amssymb,amsmath,amsthm,graphicx,latexsym}
\usepackage{mathrsfs,dsfont}
\usepackage{pstricks,pst-node,pst-plot}
\usepackage{pgfplots}
\usepackage{tikz}
\usepackage{bm}
\usepackage{caption}
\theoremstyle{thmstyleone}%
\newtheorem{theorem}{Theorem}[section]
\newtheorem{proposition}[theorem]{Proposition}%
\newtheorem{lemma}[theorem]{Lemma}%
\newtheorem{corollary}[theorem]{Corollary}

\theoremstyle{thmstyletwo}%
\newtheorem{example}[theorem]{Example}%
\newtheorem{remark}[theorem]{Remark}%

\theoremstyle{thmstylethree}%
\newtheorem{definition}[theorem]{Definition}%

\newcommand{\tp}{\mathbin{\hbox{$\bigcirc$\hbox to 0pt{\hspace{-0.81em}$\scriptstyle\top$\hfil}}}}

\begin{document}
\title[Normal ordering in the $(p,q)$-deformed generalized Weyl algebra]{Normal ordering in the $(p,q)$-deformed generalized Weyl algebra. I: Algebraic Framework and Combinatorial Identities}
\author[T. Mansour]{Toufik Mansour $^{1, \orcidlink{0000-0001-8028-2391}}$}
\address{$^1$ Department of Mathematics, University of Haifa, 3498838 Haifa, Israel, tmansour@univ.haifa.ac.il}
\author[L. Oussi]{Lahcen Oussi $^{2, \orcidlink{0000-0001-9804-0761}}$}
\address{$^2$ Faculty of Pure and Applied Mathematics, Wrocław University of Science and Technology, Wybrzeże Wyspiańskiego 27, 50-370 Wrocław, Poland, lahcen.oussi@pwr.edu.pl, oussimaths@gmail.com}
\author[M. Schork]{Matthias Schork $^{3, \orcidlink{0000-0002-9759-6618}}$}
\address{$^3$ Institute for Mathematics, Würzburg University, Emil-Fischer Str. 40, 97074 Würzburg, Germany, matthias.schork@mathematik.uni-wuerzburg.de}


\begin{abstract}
The $(p,q)$-deformed generalized Weyl algebra is generated by variables $X, Y$ and $Z_p$ which satisfy the commutation relations $XY-qYX=h Y^sZ_{p}, XZ_p=pZ_pX$, and $Z_pY=pYZ_p$, with $s\in \mathbb{N}_0$. We investigate the problem of normal ordering arbitrary words in these letters with the help of Young diagrams, and we treat certain special cases explicitly. In particular, the connection to generalized Stirling numbers is considered in detail.
\end{abstract}
\keywords{$(p, q)$-deformed Weyl algebra, $(p, q)$-commuting variables, Normal ordering, Young diagrams, Stirling numbers.}
\subjclass[2023]{05A19,11B73, 05A30, 81R99.} 
\maketitle
\section{Introduction}\label{sec1}
A commutation relation characterizes the disparity between different orders of operation of given operators $X$ and $Y$. For notation, we use the commutator $[X, Y]:=XY-YX$ to describe this relation. Note that, if $X$ and $Y$ commute (that is, $XY=YX$), then the commutator vanishes. There are many examples for noncommutative settings. In particular, matrices, quaternions, Lie algebras, etc., provide many instances. Noncommutative structures play a central role in discrete mathematics and physics (e.g., harmonic oscillator in quantum mechanics). The {\em Weyl algebra} can be considered as the complex unital algebra generated by two operators $X$ and $Y$ satisfying the commutation relation 
\begin{equation}\label{Walg}
[X, Y]=I, 
\end{equation}
where $I$ denotes the identity operator. A concrete representation of the generators is given as operators acting on (smooth) functions of a single variable: $X \rightsquigarrow D=\frac{d}{dx}$ is the derivative operator with respect to $x$, and $Y\rightsquigarrow M$ is the multiplication operator with $x$ (i.e., $(Mf)(x)=xf(x)$). Thus, from a mathematical point of view, the Weyl algebra is the simplest example of a ring of differential operators and has been studied thoroughly due to its rich structure. From a physical point of view, a realization is given by $X\rightsquigarrow \hat{a}$ and $Y \rightsquigarrow \hat{a}^{\dag}$, i.e., the  annihilation and creation operator of the harmonic oscillator. A word in the letters $X$ and $Y$ is in {\em normal ordered form} if all letters $Y$ stand to the left of all the letters $X$. It is an old result due to Scherk from 1823 (see \cite{TMMS2016} for some history) that normal ordering $(YX)^n$ (in the concrete representation mentioned above) involves the Stirling numbers of the second kind,
\begin{equation}\label{Scherk}
(YX)^n=\sum_{k=1}^n S(n,k) Y^kX^k.
\end{equation}
In the physics community, this result was rediscovered by Katriel in 1974 (in the realization with the creation and annihilation operators). Many generalizations have been considered, see, e.g.,  \cite{BF2011,TMMS2016}. \

In the past years, many ``$q$-deformed'' structures have been studied intensively by many authors. By a {\em $q$-deformation} we mean introducing a parameter $q$ into defining relations such that by taking the limit $q\rightarrow 1$ the undeformed structure is recovered. For instance, the {\em $q$-deformed Weyl algebra} is defined by the commutation relation $XY-qYX=I$, and it is clear that, for $q\rightarrow 1$, we obtain \eqref{Walg}. The {\em quantum plane} \cite{YIM1988} is the algebra generated by two variables $X$ and $Y$ satisfying the $q$-commutation relation
\begin{equation}\label{qcrintro}
XY=qYX.
\end{equation}
The consequences of \eqref{qcrintro} have been studied in the mathematical literature and there exists a deep connection to identities for $q$-special functions, see, e.g., \cite{KHT1995, KTH1997}. In the physical literature, many $q$-deformed models have been investigated. As in the undeformed case, the commutation relation of the $q$-deformed Weyl algebra is realized by the $q$-deformed annihilation and creation operators. Furthermore, one has a $q$-analog to \eqref{Scherk} involving the $q$-deformed Stirling numbers of the second kind. In more recent times, some of the $q$-deformed structures have also been {\em $(p,q)$-deformed}, i.e., a further deformation parameter $p$ has been introduced into the defining relations such that, for $p\rightarrow 1$, the $q$-deformed structure is recovered. There exist mathematical as well as physical motivations to study such $(p,q)$-deformations. For example, the $q$-deformed Weyl algebra has been further deformed from a mathematical motivation in \cite{JG2016,EKLS1993}, and the commutation relation of the $q$-deformed annihilation and creation operator has been extended to a $(p,q)$-deformation, leading in particular to $(p,q)$-deformed Stirling numbers upon normal ordering \cite{KK1992} (see also \cite{AB2010,AB2015,HOT2026,OT2024,OT2026} for some related recent works) and to $(p,q)$-special functions in analogy to the $q$-special functions mentioned above \cite{BK1994,Jag1997}. The $(p,q)$-deformed Stirling numbers were also discussed around this time in connection with a $(p,q)$-statistics for rook placements on the staircase board \cite{MWDW1991}. Since then, many further aspects and generalizations have been studied. For more background and some history concerning normal ordering, see \cite{BF2011,TMMS2016,MS2021} and the references therein.

Let us describe the contents of this paper. We consider variables $X, Y$ and $Z_p$ satisfying the following $(p,q)$-commutation relations, 
\begin{align}
   XY-qYX &= hf(Y)Z_p,\label{pqcr1} \\
  Z_pY&=pYZ_p, \label{pqcr2}\\
  XZ_p&=pZ_pX, \label{pqcr3}\\
  Z_1 &=I,\label{pqcr4}
\end{align}

where $f$ is a polynomial function. For a word $\omega$ in letters $X,Y$ and $Z_p$, we say it is in {\em normal ordered form} if all the letters $Y$ stand to the left of all the letters $Z_p$ which stand to the left of all the letters $X$. That is, $\omega$ has the form $\omega=\sum_{k,l,m} a_{k,l,m}Y^kZ_p^lX^m$ for some coefficients $a_{k,l,m}$. For the special case where $f(Y)=Y^s$ is a monomial -- where we call the algebraic structure generated by  $X,Y$ and $Z_p$ the {\em $(p,q)$-deformed generalized Weyl algebra} -- we derive the normal ordered form of arbitrary words and give a combinatorial interpretation of the normal ordering coefficients in terms of generalized Stirling numbers. We discuss many special cases by specializing the parameters $s,h,p,q$, thereby recovering several results given in literature as well as deriving new ones. For example, for $p=1$, we obtain the $q$-commutation relation $XY-qYX = hf(Y)$ studied in \cite{CCG2017,TMMS2011}. Letting furthermore $q=1$, the resulting algebra was studied in depth in \cite{BLO2013,BLO2015,BLO2015a}. On the other hand, specializing to $X=D_{p, q}$, the $(p, q)$-derivative operator, $Y$ the operator of multiplication with $x$, and $Z_p$ the Fibonacci operator \cite{GCMDCH}, one gets the $(p, q)$-commutation relations investigated in \cite{LO2021}. 

This paper is the first part of a series of 3 papers. In a sequel \cite{TLM2}, we introduce $(p,q)$-rook numbers and give an interpretation of the normal ordering coefficients of words in the $(p,q)$-deformed generalized Weyl algebra in terms of these $(p,q)$-rook numbers. In further work \cite{TLM3}, the binomial formula for $(X+Y)^n$ is studied in the $(p,q)$-deformed generalized Weyl algebra.

The structure of the present paper is as follows. In Section~\ref{mr}, we fix some notation and present basic results on normal ordering and point out special cases which correspond to well-known structures. In Section~\ref{gno}, we extend the normal ordering given in Section~\ref{mr} to arbitrary words when $f(Y)=Y^s$ -- i.e., in the $(p,q)$-deformed generalized Weyl algebra -- and discuss the connection to (generalized) Stirling numbers. Finally, in Section~\ref{concl}, we present some conclusions.
 
\section{Basic results on normal ordering}\label{mr}
In this section, we present basic normal ordering results for $X,Y$ and $Z_p$ satisfying  \eqref{pqcr1} - \eqref{pqcr4}. Before we do this, we fix some notations that will be used throughout the paper. Let $p$ and $q$ be two indeterminates. We define the {\em twin-basic numbers} for $n\in \mathbb{N}_0:=\mathbb{N}\cup \{0\}$ by
$$
[n]_{p, q}=\frac{p^n-q^n}{p-q}.
$$ 
We regard them as a $(p,q)$-deformation of the natural numbers. Clearly, specializing to $p=1$, one recovers the conventional $q$-deformed numbers, $[n]_{1, q}=\frac{1-q^n}{1-q}=[n]_{q}$. One has $[n]_{p, q}=p^{n-1}+p^{n-2}q+\cdots + pq^{n-2}+q^{n-1}$, reducing for $p=1$ to the well-known identity $[n]_{q}=1+q+q^2+\cdots + q^{n-1}$. The corresponding factorials and binomial coefficients are given by
$$
[n]_{p, q}!=[n]_{p, q}[n-1]_{p, q}\cdots [2]_{p, q}[1]_{p, q},\,\,\, {n\choose k}_{p, q}=\frac{[n]_{p, q}!}{[n-k]_{p, q}![k]_{p, q}!},
$$
together with $[0]_{p, q}!=1$. One may introduce a {\em $(p,q)$-derivative} $D_{p,q}$ by letting it act as
$$
(D_{p,q}f)(x):=\frac{f(px)-f(qx)}{(p-q)x}.
$$
For $p=1$, one recovers the {\em Jackson-derivative} $D_q$, i.e., $(D_{1,q}f)(x)=\frac{f(x)-f(qx)}{(1-q)x}=(D_{q}f)(x)$.  The action of $D_{p,q}$ on monomials is given, for $n \in \mathbb{N}$, by 
$$
D_{p,q}x^n=[n]_{p, q}x^{n-1}.
$$
See \cite{LO2024} for more information on $D_{p,q}$ and some references.

\subsection{The normal ordered form of $X^{m}Y$}
Let $X, Y$ and $Z_p$ be variables satisfying \eqref{pqcr1} - \eqref{pqcr4}, such that $f$ is a polynomial function in \eqref{pqcr1}. In this section, we present an identity for normal ordering $X^mY$. For instance, it is clear that from \eqref{pqcr1}, one obtains
\begin{equation}\label{p2no}
X^2Y=q^2YX^2+qhf(Y)Z_pX+hXf(Y)Z_p.
\end{equation}
Hence, we have to consider the normal ordered form of the last term on the right-hand side in \eqref{p2no}. For that, we need the following result.
\begin{lemma}\label{lem1}
Let $X, Y$ and $Z_p$ be variables satisfying \eqref{pqcr1} - \eqref{pqcr4}. For $k \in \mathbb{N}$, we have
\begin{equation}
XY^k=q^kY^kX+h[k]_{p, q}f(Y)Y^{k-1}Z_p\label{eq1}.
\end{equation}
\end{lemma}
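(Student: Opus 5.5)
We argue by induction on $k$. For $k=1$ the identity \eqref{eq1} reads $XY=qYX+hf(Y)Z_p$, since $[1]_{p,q}=1$ and $Y^0=I$; this is exactly the defining relation \eqref{pqcr1}.

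For the inductive step, assume \eqref{eq1} holds for some $k\ge 1$. We write $XY^{k+1}=(XY^k)Y$ and apply the hypothesis:
\[
XY^{k+1}=q^kY^kXY+h[k]_{p,q}f(Y)Y^{k-1}Z_pY .
\]
In the first term we replace $XY$ using \eqref{pqcr1}. This gives $q^{k+1}Y^{k+1}X+q^k h\,Y^kf(Y)Z_p$. In the second term we use \eqref{pqcr2}, $Z_pY=pYZ_p$, to move $Z_p$ past the last $Y$. This gives $p\,h[k]_{p,q}f(Y)Y^{k}Z_p$.

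Since $f$ is a polynomial in $Y$, $f(Y)$ commutes with every power of $Y$, so $Y^kf(Y)=f(Y)Y^k$. Collecting the two terms containing $Z_p$ therefore yields
\[
XY^{k+1}=q^{k+1}Y^{k+1}X+h\bigl(q^k+p[k]_{p,q}\bigr)f(Y)Y^kZ_p .
\]
It remains to check the identity $q^k+p[k]_{p,q}=[k+1]_{p,q}$. This is immediate from $p(p^k-q^k)+q^k(p-q)=p^{k+1}-q^{k+1}$, or alternatively from the expansion $[n]_{p,q}=p^{n-1}+\dots+q^{n-1}$. This completes the induction.

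There is no real obstacle here. The only points needing care are the order of the noncommuting factors and the correct placement of the $p$-factor: it arises from commuting $Z_p$ past $Y$ via \eqref{pqcr2}, and $Z_p$ never has to be moved past $X$.
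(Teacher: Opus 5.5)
Your proof is correct and matches the paper's argument essentially step for step. Both use induction on $k$, substitute \eqref{pqcr1} into $q^kY^kXY$, move $Z_p$ past the final $Y$ via $Z_pY=pYZ_p$, and close with $[k+1]_{p,q}=q^k+p[k]_{p,q}$.
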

\begin{proof}
The proof will be by induction on $k$. For $k=1$, \eqref{eq1} holds true by assumption. By the induction hypothesis, we have
\begin{align*}
XY^{k+1}&=q^kY^kXY+h[k]_{p, q}f(Y)Y^{k-1}Z_{p}Y\\
&=q^kY^k(qYX+hf(Y)Z_{p})+h[k]_{p, q}f(Y)Y^{k-1}Z_{p}Y\\
&=q^{k+1}Y^{k+1}X+q^{k}hY^kf(Y)Z_{p}+h[k]_{p, q}f(Y)Y^{k-1}Z_{p}Y\\
&=q^{k+1}Y^{k+1}X+q^{k}hf(Y)Y^kZ_{p}+ph[k]_{p, q}f(Y)Y^kZ_{p}\\
&=q^{k+1}Y^{k+1}X+h(q^k+p[k]_{p, q})f(Y)Y^{k}Z_{p}\\
&=q^{k+1}Y^{k+1}X+h[
k+1]_{p, q}f(Y)Y^kZ_p,
\end{align*}
where in the last equation we used that $[k+1]_{p, q}=q^k+p[k]_{p, q}$.
\end{proof}
Similarly, one can generalize the above lemma to a polynomial in $Y$ as follows.
\begin{proposition}\label{pqcrp}
Let $X, Y$ and $Z_p$ be variables satisfying \eqref{pqcr1} - \eqref{pqcr4}. For any polynomial $L\in\mathbb{C}[Y]$, we have the following identity,
\begin{equation}\label{polyrelation}
XL(Y)=L(qY)X+hf(Y)D_{p, q}L(Y)Z_p.
\end{equation}
\end{proposition}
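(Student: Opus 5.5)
The plan is to reduce to monomials via linearity, and then apply Lemma~\ref{lem1} term by term. Both sides of \eqref{polyrelation} are linear in $L$. On the left this is clear, since multiplication by $X$ is linear. On the right, the maps $L(Y)\mapsto L(qY)$ and $L\mapsto D_{p,q}L$ are both $\mathbb{C}$-linear, and left multiplication by $hf(Y)$ and right multiplication by $Z_p$ (resp.\ $X$) preserve linearity. So it suffices to verify the identity for $L(Y)=Y^k$ with $k\in\mathbb{N}_0$, and to extend it to $L=\sum_k c_kY^k$ by summing.

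For the constant monomial $k=0$, i.e.\ $L=1$, the left side is $X$. On the right, $L(qY)X=X$ and $D_{p,q}1=0$, since $\frac{1-1}{(p-q)x}=0$. Equivalently, $[0]_{p,q}=0$. So the identity holds.

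For $k\geq 1$, Lemma~\ref{lem1} gives $XY^k=q^kY^kX+h[k]_{p,q}f(Y)Y^{k-1}Z_p$. We compare this with the right side of \eqref{polyrelation}. Here $L(qY)=(qY)^k=q^kY^k$, because $q$ is a scalar. Also $D_{p,q}Y^k=[k]_{p,q}Y^{k-1}$, by the stated action of $D_{p,q}$ on monomials, interpreted formally with $x$ replaced by $Y$. Hence $hf(Y)D_{p,q}L(Y)Z_p=h[k]_{p,q}f(Y)Y^{k-1}Z_p$, and the two sides agree.

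The only point requiring care is the meaning of $D_{p,q}L(Y)$ as an element of $\mathbb{C}[Y]$. I would define it as the polynomial $(D_{p,q}L)$ evaluated at $Y$, which for polynomials is well defined because $L(px)-L(qx)$ is divisible by $x$. With this reading the identity follows at once; no genuine obstacle is expected. Note also that $f(Y)$ commutes with powers of $Y$, so the placement of $f(Y)$ relative to $D_{p,q}L(Y)$ is immaterial.
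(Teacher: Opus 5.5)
Your proposal is correct and follows the paper's proof: expand $L$ in monomials, apply Lemma~\ref{lem1} to each term, and use the linearity of $D_{p,q}$ and of $L\mapsto L(qY)$. Your separate treatment of the constant term $k=0$ is slightly more careful than the paper. The paper sums from $i=0$ and silently relies on $[0]_{p,q}=0$ to cancel the formal $Y^{-1}$ term, even though Lemma~\ref{lem1} is only stated for $k\in\mathbb{N}$.
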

\begin{proof}
Let $L(y)=\sum_{i=0}^{n}a_iy^i$ be a polynomial. By virtue of Lemma~\ref{lem1}, and using the linearity of the $(p, q)$-derivative operator $D_{p, q}$, one has
\begin{align*}
XL(Y)&=X\sum_{i=0}^{n}a_{i}Y^i\\
&=\sum_{i=0}^{n}a_i\big(q^iY^iX+h[i]_{p, q}f(Y)Y^{i-1}Z_p\big)\\
&=\sum_{i=0}^{n}a_i(qY)^iX+hf(Y)\sum_{i=0}^{n}a_i[i]_{p, q}Y^{i-1}Z_p\\
&=L(qY)X+hf(Y)D_{p, q}L(Y)Z_p,
\end{align*}
as requested.
\end{proof}
Now, letting $L\equiv f$ in Proposition~\ref{pqcrp}, we obtain
$$
Xf(Y)=f(qY)X+hf(Y)D_{p, q}f(Y)Z_p.
$$
Using this in \eqref{p2no} yields the normal ordered form of $X^2Y$,
\begin{equation}\label{p2no2}
X^2Y=q^2YX^2+h\big(qf(Y)+pf(qY)\big)Z_pX+h^2f(Y)D_{p, q}f(Y)Z_p^2.
\end{equation}

To describe the general case $X^mY$, let us first introduce some basic tools about partitions from \cite{TMMS2011} which we will use in the following. Recall that a {\em partition} $\lambda=(\lambda_1,\lambda_2,\ldots,\lambda_k) \equiv \lambda_1\lambda_2\ldots\lambda_k$ is a weakly decreasing sequence of positive integers. We denote the sum of the parts of $\lambda$ by $|\lambda|$, that is, $|\lambda|=\sum_{i=1}^k\lambda_i$. For a partition $\lambda$, the {\em Young diagram} $Y_\lambda$ of shape $\lambda$ is a left-justified diagram of $|\lambda|$ boxes, with $\lambda_i$ black boxes in the $i$th column. We denote the set of all Young diagrams that are contained in a $ \ell \times k$ box by ${\mathcal I}_{\ell,k}$. Define ${\mathcal I}_{m}=\bigcup_{k=0}^m {\mathcal I}_{m-k,k}$. Note that ${\mathcal I}_{m-k,k}$ contains $\binom{m}{k}$ elements, hence $|{\mathcal I}_m|=2^m$. In the following, we will denote the empty partition in ${\mathcal I}_{m-k,k}$ also by $\emptyset$ (instead of, e.g., $000 \in {\mathcal I}_{1,3}$ or $00 \in {\mathcal I}_{2,2}$, see  Figure~\ref{fig1}).

\begin{example} The set $\mathcal{I}_4$ containing 16 partitions is given in Figure~\ref{fig1}.
\begin{center}
\begin{figure}[htp]
\begin{pspicture}(14,2.2)
\psset{xunit=0.7,yunit=0.7}
\put(-.5,0){\psline(0,2.8)(1,2.8) \put(.2,2.15){$\mathcal{I}_{0,4}$}\put(0.8,1.9){$\lambda=0000$}}
\put(2,0){\put(0,.6){\psline(0,2)(.75,2)\psline(0,1.75)(.75,1.75)\psline(.25,1.75)(.25,2)\psline(.5,1.75)(.5,2)\psline(.75,1.75)(.75,2)
\psline(0,1.75)(0,2)}\put(0.8,2.15){$\mathcal{I}_{1,3}$}\put(.75,1.8){$\lambda=000$}}
\put(2,-.5){\put(0,.6){\psline(0,2)(.75,2)\psline(0,1.75)(.75,1.75)\psline(.25,1.75)(.25,2)\psline(.5,1.75)(.5,2)\psline(.75,1.75)(.75,2)
\psline(0,1.75)(0,2)\psline[fillstyle=solid,fillcolor=black](0,2)(.25,2)(.25,1.75)(0,1.75)(0,2)}\put(.75,1.8){$\lambda=100$}}
\put(2,-1){\put(0,.6){\psline(0,2)(.75,2)\psline(0,1.75)(.75,1.75)\psline(.25,1.75)(.25,2)\psline(.5,1.75)(.5,2)\psline(.75,1.75)(.75,2)
\psline(0,1.75)(0,2)\psline[fillstyle=solid,fillcolor=black](0,2)(.5,2)(.5,1.75)(0,1.75)(0,2)}\put(.75,1.8){$\lambda=110$}}
\put(2,-1.5){\put(0,.6){\psline(0,2)(.75,2)\psline(0,1.75)(.75,1.75)\psline(.25,1.75)(.25,2)\psline(.5,1.75)(.5,2)\psline(.75,1.75)(.75,2)
\psline(0,1.75)(0,2)\psline[fillstyle=solid,fillcolor=black](0,2)(.75,2)(.75,1.75)(0,1.75)(0,2)}\put(.75,1.8){$\lambda=111$}}
\put(4.5,0){\put(0,.6){\psline(0,2)(.5,2)(.5,1.5)(0,1.5)(0,2)\psline(0,1.75)(.5,1.75)\psline(.25,1.5)(.25,2)}
\put(1,2.15){$\mathcal{I}_{2,2}$}\put(.5,1.7){$\lambda=00$}}
\put(4.5,-.75){\put(0,.6){\psline(0,2)(.5,2)(.5,1.5)(0,1.5)(0,2)\psline(0,1.75)(.5,1.75)\psline(.25,1.5)(.25,2)
\psline[fillstyle=solid,fillcolor=black](0,2)(.25,2)(.25,1.75)(0,1.75)(0,2)}\put(.5,1.7){$\lambda=10$}}
\put(4.5,-1.5){\put(0,.6){\psline(0,2)(.5,2)(.5,1.5)(0,1.5)(0,2)\psline(0,1.75)(.5,1.75)\psline(.25,1.5)(.25,2)
\psline[fillstyle=solid,fillcolor=black](0,2)(.5,2)(.5,1.75)(0,1.75)(0,2)}\put(.5,1.7){$\lambda=11$}}
\put(6.25,0){\put(0,.6){\psline(0,2)(.5,2)(.5,1.5)(0,1.5)(0,2)\psline(0,1.75)(.5,1.75)\psline(.25,1.5)(.25,2)
\psline[fillstyle=solid,fillcolor=black](0,2)(.25,2)(.25,1.5)(0,1.5)(0,2)}\put(.5,1.7){$\lambda=20$}}
\put(6.25,-.75){\put(0,.6){\psline(0,2)(.5,2)(.5,1.5)(0,1.5)(0,2)\psline(0,1.75)(.5,1.75)\psline(.25,1.5)(.25,2)
\psline[fillstyle=solid,fillcolor=black](0,2)(.5,2)(.5,1.75)(.25,1.75)(.25,1.5)(0,1.5)(0,2)}\put(.5,1.7){$\lambda=21$}}
\put(6.25,-1.5){\put(0,.6){\psline(0,2)(.5,2)(.5,1.5)(0,1.5)(0,2)\psline(0,1.75)(.5,1.75)\psline(.25,1.5)(.25,2)
\psline[fillstyle=solid,fillcolor=black](0,2)(.5,2)(.5,1.5)(0,1.5)(0,2)}\put(.5,1.7){$\lambda=22$}}
\put(8.5,0){\put(0,.6){\psline(0,2)(.25,2)(.25,1.25)(0,1.25)(0,2)\psline(0,1.75)(.25,1.75)\psline(0,1.5)(.25,1.5)}
\put(1,2.15){$\mathcal{I}_{3,1}$}\put(.25,1.5){$\lambda=0$}}
\put(8.5,-1){\put(0,.6){\psline(0,2)(.25,2)(.25,1.25)(0,1.25)(0,2)\psline(0,1.75)(.25,1.75)\psline(0,1.5)(.25,1.5)
\psline[fillstyle=solid,fillcolor=black](0,2)(.25,2)(.25,1.75)(0,1.75)(0,2)}\put(.25,1.5){$\lambda=1$}}
\put(10.25,0){\put(0,.6){\psline(0,2)(.25,2)(.25,1.25)(0,1.25)(0,2)\psline(0,1.75)(.25,1.75)\psline(0,1.5)(.25,1.5)
\psline[fillstyle=solid,fillcolor=black](0,2)(.25,2)(.25,1.5)(0,1.5)(0,2)}\put(.25,1.5){$\lambda=2$}}
\put(10.25,-1){\put(0,.6){\psline(0,2)(.25,2)(.25,1.25)(0,1.25)(0,2)\psline(0,1.75)(.25,1.75)\psline(0,1.5)(.25,1.5)
\psline[fillstyle=solid,fillcolor=black](0,2)(.25,2)(.25,1.25)(0,1.25)(0,2)}\put(.25,1.5){$\lambda=3$}}
\put(12.7,0){\put(0,.6){\psline(0,2)(0,1)}\put(-.3,2.15){$\mathcal{I}_{4,0}$}\put(.1,1.25){$\lambda=0$}}
\end{pspicture}
\caption{The Young diagrams in $\mathcal{I}_4$.}\label{fig1}
\end{figure}
\end{center}
\end{example}

\begin{definition}\label{dgd}
Let $f(y)=\sum_{k=0}^s \alpha_k y^k$ be a polynomial. For a partition $\lambda$ with $d\geq 1$ parts, we define
\begin{align*}
g_{d}(\lambda; Y)&\equiv g_{d}(\lambda_1, \lambda_2, \ldots, \lambda_d; Y)\\
&:=\sum_{i_1, i_2, \ldots, i_d=0}^{s}\Big(q^{\sum_{j=1}^{d}i_j\lambda_j}\prod_{j=1}^{d}\alpha_{i_j}\prod_{j=1}^{d-1}[i_1+\cdots +i_j+1-j]_{p, q}\Big)Y^{i_1+i_2+\cdots +i_d+1-d}.
\end{align*}
Moreover, for $d=0$, we set
$$g_0(\emptyset; Y):=Y.$$
\end{definition}
Note that $g_{d}(\lambda; Y)$ depends on the polynomial $f$, but to simplify the notation, we do not indicate this dependency explicitly. It is clear that, for $d=1$, we obtain, for $k\geq 0$,
$$g_1(k; Y)=\sum_{i=0}^{s}q^{ki}\alpha_iY^i=f(q^kY).$$

\begin{example}\label{Explicitg} Let us give the explicit values of $g_{d}(\lambda; Y)$ for small $d$. 
\begin{itemize}
\item $g_{1}(\emptyset; Y)=f(Y)$,
\item $g_{1}(1; Y)=f(qY)$,
\item $g_{1}(2; Y)=f(q^2Y)$,
\item $g_{2}(\emptyset; Y)=f(Y)D_{p, q}f(Y)$,
\item $g_{2}(1; Y)=D_{p, q}f(qY)$,
\item $g_{2}(10; Y)=f(Y)D_{p, q}f(qY)$,
\item $g_{2}(11; Y)=f(qY)D_{p, q}f(qY)$,
\item $g_{3}(\emptyset; Y)=f(Y)\bigl(f(pY)D^2_{p, q}f(Y)+q^{-1}D_{p, q}f(qY)D_{p, q}f(Y)\bigr)$.
\end{itemize}
\end{example}

Following \cite{TMMS2011}, for any partition $\lambda =(\lambda_1,\lambda_2, \ldots, \lambda_d)$ we define $\lambda^{*}$ to be the partition
$$
\lambda^{*}=(\lambda_1+1,\lambda_2+1, \ldots, \lambda_d+1),
$$
and $\lambda_{*}$ to be the partition
$$
\lambda_{*}=(\lambda_1,\lambda_2, \ldots, \lambda_d,0).
$$
Thus, $Y_{\lambda^*}$ is the Young diagram that is obtained from $Y_{\lambda}$ by adding a filled row before the first row of $Y_{\lambda}$, and $Y_{\lambda_*}$ is the Young diagram that is obtained from $Y_{\lambda}$ by adding an empty column on the right side of $Y_{\lambda}$. Thus, if $Y_{\lambda} \in {\mathcal I}_{\ell,k}$, then $Y_{\lambda^*} \in {\mathcal I}_{\ell+1,k}$ and $Y_{\lambda_*} \in {\mathcal I}_{\ell,k+1}$.
\begin{lemma}\label{lxgd}
Let $\lambda$ be a partition with $d\geq 0$ parts. Let $X, Y$ and $Z_p$ be variables satisfying \eqref{pqcr1} - \eqref{pqcr4}, where $f(y)=\sum_{i=0}^{s}\alpha_i y^i$ is a polynomial. Then $g_{d}(\lambda; Y)$ satisfies the following recursive formula,
\begin{equation}\label{xgde}
Xg_{d}(\lambda; Y)=q^{1-d}g_{d}(\lambda^{*}; Y)X+hg_{d+1}(\lambda_{*}; Y)Z_p.
\end{equation}
\end{lemma}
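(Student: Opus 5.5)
The plan is to apply Proposition~\ref{pqcrp} directly with $L(Y)=g_d(\lambda;Y)$, which is a polynomial in $Y$. This gives
\[
Xg_d(\lambda;Y)=g_d(\lambda;qY)X+hf(Y)\,D_{p,q}g_d(\lambda;Y)\,Z_p .
\]
It then remains to verify two polynomial identities:
\[
g_d(\lambda;qY)=q^{1-d}g_d(\lambda^*;Y),\qquad f(Y)\,D_{p,q}g_d(\lambda;Y)=g_{d+1}(\lambda_*;Y).
\]
Both will be checked term by term from Definition~\ref{dgd}. The case $d=0$ is treated separately. There $g_0(\emptyset;Y)=Y$ and Proposition~\ref{pqcrp} (or \eqref{pqcr1}) gives $XY=qYX+hf(Y)Z_p$. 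This matches \eqref{xgde}, provided one reads $g_0(\emptyset^*;Y)=g_0(\emptyset;Y)=Y$ (the empty partition has no parts to increase) and uses $g_1(0;Y)=f(Y)$.

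For the first identity with $d\ge1$, I substitute $qY$ into the monomial $Y^{i_1+\cdots+i_d+1-d}$. This produces the factor $q^{\sum_j i_j+1-d}$. Combined with $q^{\sum_j i_j\lambda_j}$, it becomes $q^{1-d}\,q^{\sum_j i_j(\lambda_j+1)}$. The remaining factors $\prod_j\alpha_{i_j}$ and the $[\cdot]_{p,q}$ product do not involve $\lambda$, so the sum is exactly $q^{1-d}g_d(\lambda^*;Y)$.

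For the second identity, I apply $D_{p,q}Y^n=[n]_{p,q}Y^{n-1}$ with $n=i_1+\cdots+i_d+1-d$. If $n=0$, the bracket $[0]_{p,q}=0$ kills the term, consistent with $D_{p,q}$ of a constant. Multiplying by $f(Y)=\sum_{i_{d+1}}\alpha_{i_{d+1}}Y^{i_{d+1}}$ then gives the exponent $i_1+\cdots+i_{d+1}+1-(d+1)$. The product of brackets is extended by the new factor $[i_1+\cdots+i_d+1-d]_{p,q}$, which is precisely the $j=d$ factor in the definition of $g_{d+1}$. For the $q$-power, $\lambda_*$ has $(d+1)$-st part $0$, so $q^{\sum_{j\le d+1}i_j(\lambda_*)_j}=q^{\sum_{j\le d}i_j\lambda_j}$. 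Hence the expression equals $g_{d+1}(\lambda_*;Y)$.

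The main obstacle I foresee is bookkeeping at the boundaries rather than anything conceptual. First, the exponent $i_1+\cdots+i_d+1-d$ may be negative when many $i_j=0$. I would handle this by observing that any such term already carries an earlier bracket $[\,\cdot\,]_{p,q}$ with argument $0$, so it vanishes, and the formal sums agree. Second, the $d=0$ convention has to be made consistent, as described above.
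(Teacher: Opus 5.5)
Your proposal is correct and follows essentially the paper's route. The paper expands $g_d(\lambda;Y)$ via Definition~\ref{dgd} and applies Lemma~\ref{lem1} to each monomial; your use of Proposition~\ref{pqcrp} is just that termwise step packaged linearly, after which both arguments read off $q^{1-d}g_d(\lambda^{*};Y)$ and $g_{d+1}(\lambda_{*};Y)$ in the same way, and your explicit handling of $d=0$ and of the terms with nonpositive exponent (killed by a zero bracket) is care the paper's proof skips.
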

\begin{proof}
The proof is the same as for $p=1$ given in \cite{TMMS2011}. By virtue of Definition~\ref{dgd}, we obtain
\begin{align*}
Xg_{d}(\lambda; Y)&=Xg_{d}(\lambda_1, \ldots, \lambda_d; Y)\\
&=\sum_{i_1, \ldots, i_d\geq 0}\Big(q^{\sum_{j=1}^{d}i_j\lambda_j}\prod_{j=1}^{d}\alpha_{i_j}\prod_{j=1}^{d-1}[i_1+\cdots +i_j+1-j]_{p, q}\Big) XY^{i_1+\cdots +i_d+1-d}\\
&=q^{1-d}\sum_{i_1, \ldots, i_d\geq 0}\Big(q^{\sum_{j=1}^{d}i_j\lambda^{*}_j}\prod_{j=1}^{d}\alpha_{i_j}\prod_{j=1}^{d-1}[i_1+\cdots +i_j+1-j]_{p, q}\Big)Y^{i_1+\cdots +i_d+1-d}X\\
&\qquad+ h\sum_{i_1, \ldots, i_{d+1}\geq 0}\Big(q^{\sum_{j=1}^{d}i_j\lambda_j}\prod_{j=1}^{d}\alpha_{i_j}\prod_{j=1}^{d}[i_1+\cdots +i_j+1-j]_{p, q}\Big)Y^{i_1+\cdots +i_d-d}Z_p\\
&=q^{1-d}g_{d}(\lambda^{*}; Y)X+hg_{d+1}(\lambda_{*}; Y)Z_p,
\end{align*}
where we used Lemma~\ref{lem1} in the second equality.
\end{proof}
Now, we are in the position to present the normal ordered form of $X^mY$.
\begin{theorem}
Let $X, Y$ and $Z_p$ be variables satisfying \eqref{pqcr1} - \eqref{pqcr4}, where $f(y)=\sum_{j=0}^{s}\alpha _{j}y^j$ is a polynomial. Then the normal ordered form of $X^mY$ is given by
\begin{equation}\label{noxmy}
X^mY=\sum_{j=0}^{m}h^j\Big(\sum_{\lambda\in \mathcal{I}_{m-j, j}}q^{m-j-\vert\lambda\vert}p^{\vert \lambda \vert}g_{j}(\lambda; Y)\Big)Z_{p}^jX^{m-j}.
\end{equation}
\end{theorem}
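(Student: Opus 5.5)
We argue by induction on $m$, using Lemma~\ref{lxgd} to push one more $X$ through each term of the normal ordered form of $X^mY$. For $m=0$ the right-hand side of \eqref{noxmy} has the single term $j=0$, $\lambda=\emptyset\in\mathcal{I}_{0,0}$, and it equals $g_0(\emptyset;Y)=Y$. (As a sanity check, $m=1$ reproduces \eqref{pqcr1}, since $g_1(0;Y)=f(Y)$.) Throughout, a partition $\lambda\in\mathcal{I}_{m-j,j}$ is regarded as a sequence of exactly $j$ weakly decreasing entries in $\{0,\dots,m-j\}$, zeros allowed. This matches the convention of Figure~\ref{fig1} and is the form in which $g_j(\lambda;Y)$ and Lemma~\ref{lxgd} are used.

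For the inductive step, assume \eqref{noxmy} holds for $m$ and multiply on the left by $X$. For each $j$ and each $\lambda\in\mathcal{I}_{m-j,j}$, Lemma~\ref{lxgd} gives
$$Xg_j(\lambda;Y)Z_p^jX^{m-j}=q^{1-j}g_j(\lambda^*;Y)XZ_p^jX^{m-j}+hg_{j+1}(\lambda_*;Y)Z_p^{j+1}X^{m-j}.$$
Iterating \eqref{pqcr3} gives $XZ_p^j=p^jZ_p^jX$. So the first summand becomes $q^{1-j}p^jg_j(\lambda^*;Y)Z_p^jX^{m+1-j}$. Now include the prefactor $h^jq^{m-j-|\lambda|}p^{|\lambda|}$ and use $|\lambda^*|=|\lambda|+j$ and $|\lambda_*|=|\lambda|$.
\begin{itemize}
\item The first summand becomes $h^jq^{(m+1)-j-|\lambda^*|}p^{|\lambda^*|}g_j(\lambda^*;Y)Z_p^jX^{m+1-j}$, with $\lambda^*\in\mathcal{I}_{m+1-j,j}$.
\item The second summand becomes $h^{j+1}q^{(m+1)-(j+1)-|\lambda_*|}p^{|\lambda_*|}g_{j+1}(\lambda_*;Y)Z_p^{j+1}X^{(m+1)-(j+1)}$, with $\lambda_*\in\mathcal{I}_{m-j,j+1}$.
\end{itemize}
Both have exactly the shape required by \eqref{noxmy} for $m+1$.

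It remains to check that each $\mu\in\mathcal{I}_{m+1-j,j}$ arises exactly once. This is a combinatorial bijection: $\mathcal{I}_{m+1-j,j}$ is the disjoint union of $\{\lambda^*:\lambda\in\mathcal{I}_{m-j,j}\}$ and $\{\nu_*:\nu\in\mathcal{I}_{m+1-j,j-1}\}$.
\begin{itemize}
\item If the last entry $\mu_j$ is positive, then subtracting $1$ from every entry inverts $^*$.
\item If $\mu_j=0$, then deleting the last entry inverts $_*$.
\end{itemize}
This is the Pascal recursion $\binom{m+1}{j}=\binom{m}{j}+\binom{m}{j-1}$. In the sum for $X^{m+1}Y$, the coefficient of $Z_p^jX^{m+1-j}$ therefore collects the first-type terms from level $j$ and the second-type terms from level $j-1$, and together they give exactly $\sum_{\mu\in\mathcal{I}_{m+1-j,j}}$. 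The edge cases need a quick check. For $j=0$ only $\mu=\emptyset$ occurs, and Lemma~\ref{lxgd} with $d=0$ has $\emptyset^*=\emptyset$. For $j=m+1$ only $\mu=0\cdots0=\emptyset_*$ occurs.

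The main obstacle is bookkeeping rather than any new idea. One must make sure the zero-padded convention for partitions is applied consistently, so that $\lambda^*$ and $\lambda_*$ land in the right boxes and $g_j$ is evaluated at $j$-entry sequences. One must also verify the exponent matching above, in particular that the factor $p^j$ from commuting $X$ past $Z_p^j$ combines with the $q^{1-j}$ of Lemma~\ref{lxgd} to shift both exponents correctly by $|\lambda^*|-|\lambda|=j$.
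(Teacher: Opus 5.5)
Your proof is correct and follows the paper's argument. Both use induction on $m$, apply Lemma~\ref{lxgd} to each $g_j(\lambda;Y)$, use $XZ_p^j=p^jZ_p^jX$, and split $\mathcal{I}_{m+1-j,j}$ into diagrams with a filled first row (the $\lambda^*$) and diagrams with an empty last column (the $\lambda_*$). Your explicit zero-padded convention and your exponent bookkeeping are, if anything, tidier than the paper's displayed computation.
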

\begin{proof}
We use induction on $m$. Obviously, for $m=0$, the left-hand side of \eqref{noxmy} gives $Y$, and the right-hand side is given by
$$\sum_{\lambda\in \mathcal{I}_{0, 0}}q^{0-0-\vert\lambda\vert}p^{\vert\lambda\vert}g_{0}(\lambda; Y)=g_{0}(\emptyset; Y)=Y.$$
Assume that \eqref{noxmy} holds for $m \geq 0$. Then, by the induction hypothesis, we have
\begin{align*}
X^{m+1}Y&=\sum_{j=0}^{m}h^{j}\left(\sum_{\lambda\in \mathcal{I}_{m-j, j}}q^{m-j-\vert\lambda\vert}p^{\vert\lambda\vert}\big(Xg_{j}(\lambda; Y)\big)\right)Z_{p}^{j}X^{m-j}\\
&=\sum_{j=0}^{m}h^{j}\left(\sum_{\lambda\in \mathcal{I}_{m-j, j}}q^{m-j-\vert\lambda\vert}p^{\vert\lambda\vert}\Big(q^{1-j}g_{j}(\lambda^{*}; Y)X+hg_{j+1}(\lambda_{*}; Y)Z_p\Big)\right) Z_{p}^{j}X^{m-j}\\
&=\sum_{j=0}^{m}h^{j}\left(\sum_{\lambda\in \mathcal{I}_{m-j, j}}q^{m-j+1-\sum_{i=1}^{j}(\lambda_i+1)}p^{\vert\lambda\vert}g_{j}(\lambda^{*}; Y)XZ_{p}^{j}\right)X^{m-j}\\
&\qquad\qquad +\sum_{j=0}^{m}h^{j+1}\left(\sum_{\lambda\in \mathcal{I}_{m-j, j}}q^{m-j-\vert\lambda\vert}p^{\vert\lambda\vert}g_{j+1}(\lambda_{*}; Y)\right)Z_{p}^{j+1}X^{m-j}\\
&=\sum_{j=0}^{m}h^{j}\left(\sum_{\lambda\in \mathcal{I}_{m-j, j}}q^{m-j+1-\sum_{i=1}^{j}(\lambda_j+1)}p^{\vert\lambda\vert+j}g_{j}(\lambda^{*}; Y)\right)Z_{p}^{j}X^{m+1-j}\\
&\qquad\qquad +\sum_{j=0}^{m+1}h^{j}\left(\sum_{\lambda_{*}\in \mathcal{I}_{m+1-j, j}}q^{m+1-j-\vert\lambda_{*}\vert}p^{\vert\lambda_{*}\vert}g_{j}(\lambda_{*}; Y)\right)Z_{p}^{j}X^{m+1-j}\\
&=\sum_{j=0}^{m+1}h^{j}\left(\sum_{\lambda^{*}\in \mathcal{I}_{m+1-j, j}}q^{m+1-j-\vert\lambda^{*}\vert}p^{\vert\lambda^{*}\vert}g_{j}(\lambda^{*}; Y)\right)Z_{p}^{j}X^{m+1-j}\\
&\qquad\qquad +\sum_{j=0}^{m+1}h^{j}\left(\sum_{\lambda_{*}\in \mathcal{I}_{m+1-j, j}}q^{m+1-j-\vert\lambda_{*}\vert}p^{\vert\lambda_{*}\vert}g_{j}(\lambda_{*}; Y)\right)Z_{p}^{j}X^{m+1-j}\\
&=\sum_{j=0}^{m+1}h^{j}\left(\sum_{\lambda\in \mathcal{I}_{m+1-j, j}}q^{m+1-j-\vert\lambda\vert}p^{\vert\lambda\vert}g_{j}(\lambda; Y)\right)Z_{p}^{j}X^{m+1-j},
\end{align*}
where the second equality follows from Lemma~\ref{lxgd}. The sum over $\lambda_{*}$ is understood as the sum over all Young diagrams in $\mathcal{I}_{m+1-j, j}$, where the last column is empty (by convention, we define $\mathcal{I}_{k, -1}$ to be the empty set), while the sum over $\lambda^{*}$ is understood as the sum over all Young diagrams $\mathcal{I}_{m+1-j, j}$ such that the first row is filled.
\end{proof}
\begin{example}\label{Explicitno} Let us consider \eqref{noxmy} for small $m$, using the expressions given in Example~\ref{Explicitg}.
\begin{itemize}
\item For $m=1$, one obtains $\mathcal{I}_{1, 0}=\{\emptyset\}, \, \mathcal{I}_{0, 1}=\{\emptyset\}.$ Hence,
\begin{align*}
XY&=qg_{0}(\emptyset; Y)X+hg_{1}(\emptyset; Y)Z_p\\
&=qYX+hf(Y)Z_p,
\end{align*}
which agrees with the $(p, q)$-commutation relation \eqref{pqcr1}.
\item For $m=2$, we have $\mathcal{I}_{2, 0}=\{\emptyset\}, \, \mathcal{I}_{1, 1}=\{\emptyset, 1\}, \, \mathcal{I}_{0, 2}=\{\emptyset\}$. Hence,
\begin{align*}
X^2Y&=q^2g_{0}(\emptyset; Y)X^2+h\bigl(qg_{1}(\emptyset; Y)+pg_{1}(1; Y)\bigr)Z_pX+h^2g_{2}(\emptyset; Y)Z_p^2\\
&=q^2YX^2+h\big(qf(Y)+pf(qY)\big)Z_pX+h^2f(Y)D_{p, q}f(Y)Z_p^2,
\end{align*}
which agrees with \eqref{p2no2}.
\item For $m=3$, we have $\mathcal{I}_{3, 0}=\{\emptyset\}, \, \mathcal{I}_{2, 1}=\{\emptyset,1,2\}, \, \mathcal{I}_{1, 2}=\{\emptyset,10,11\}, \, \mathcal{I}_{0, 3}=\{\emptyset\}.$ Hence,
\begin{align*}
X^3Y=&q^3g_{0}(\emptyset; Y)X^3+h\left\{q^2g_{1}(\emptyset; Y)+qpg_{1}(1; Y)+p^2g_{1}(2; Y)\right\}Z_pX^2 \\ &+h^2\left\{qg_{2}(\emptyset; Y)+pg_{2}(10; Y)+q^{-1}p^2g_{2}(11; Y)\right\}Z_p^2X+h^3g_{3}(\emptyset; Y)Z_p^3\\ =&q^3 Y X^3+h\left\{q^2f(Y)+qpf(qY)+p^2f(q^2Y)\right\}Z_pX^2 \\ &+h^2\left\{q f(Y)D_{p,q}f(Y)+pf(Y)D_{p, q}f(qY)+q^{-1}p^2f(qY)D_{p, q}f(qY)\right\}Z_p^2X\\
&+h^3\left\{f(Y)\bigl(f(pY)D^2_{p, q}f(Y)+q^{-1}D_{p, q}f(qY)D_{p, q}f(Y)\bigr)\right\}Z_p^3.
\end{align*}
\end{itemize}
\end{example}
Now, let us specialize our considerations to the case where $f$ is a monomial.
\begin{corollary}
Let $f(y)=y^s$ be a monomial. Then $g_{d}(\lambda; Y)$ can be written as
\begin{equation}
g_{d}(\lambda; Y)=q^{s\vert\lambda\vert}\bigg(\prod_{j=1}^{d-1}[1+(s-1)j]_{p, q}\bigg)Y^{1+(s-1)d}.
\end{equation}
\end{corollary}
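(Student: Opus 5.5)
This is a direct specialization of Definition~\ref{dgd}, so the plan is to substitute and evaluate. For $f(y)=y^s$ we have $\alpha_k=\delta_{k,s}$, where $\delta$ is the Kronecker delta. Hence, in the $d$-fold sum defining $g_d(\lambda;Y)$, only the single term $i_1=i_2=\cdots=i_d=s$ survives, and the product of the $\alpha_{i_j}$ equals $1$. The proof then reduces to evaluating the three remaining factors at this index.

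First, the $q$-power becomes $q^{\sum_{j=1}^d s\lambda_j}=q^{s|\lambda|}$. Second, in the product of twin-basic numbers, the $j$th argument is
$$i_1+\cdots+i_j+1-j=sj+1-j=1+(s-1)j,$$
so the product becomes $\prod_{j=1}^{d-1}[1+(s-1)j]_{p,q}$. Third, the exponent of $Y$ is $sd+1-d=1+(s-1)d$. Multiplying these gives the claimed formula for every $d\ge 1$.

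The case $d=0$ is handled separately, since Definition~\ref{dgd} sets $g_0(\emptyset;Y)=Y$ directly. The right-hand side of the corollary also gives $Y$ here: we have $|\lambda|=0$, the product $\prod_{j=1}^{-1}$ is empty and equals $1$, and $Y^{1+(s-1)\cdot 0}=Y$.

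As a consistency check, for $d=1$ the formula gives $q^{s\lambda_1}Y^s=f(q^{\lambda_1}Y)$, which agrees with the earlier observation that $g_1(k;Y)=f(q^kY)$.

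The argument has no genuine obstacle. The only point needing care is the degenerate value $s=0$, where some arguments $1-j$ are nonpositive. The formula still holds there as a formal identity in $[n]_{p,q}=(p^n-q^n)/(p-q)$: for $s=0$ the factor $[0]_{p,q}=0$ appears when $d\ge 2$, so both sides vanish. No further argument is needed.
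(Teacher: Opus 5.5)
Your proof is correct and matches the paper's implicit argument: the paper states this corollary without a written proof, as an immediate specialization of Definition~\ref{dgd} in which only the term $i_1=\cdots=i_d=s$ survives. Your separate checks for $d=0$ and for $s=0$ are harmless additions that the paper does not make.
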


Hence, the normal ordered form of $X^mY$ given in \eqref{noxmy} can be simplified as follows,
\begin{align*}
X^mY&=\sum_{k=0}^{m}h^{k}\bigg(\sum_{\lambda\in \mathcal{I}_{m-k, k}}q^{m-k-\vert \lambda\vert}p^{\vert\lambda\vert}g_{k}(\lambda; Y)\bigg)Z_{p}^{k}X^{m-k}\\
&=\sum_{k=0}^{m}h^{k}\bigg(\sum_{\lambda\in \mathcal{I}_{m-k, k}}q^{m-k-\vert\lambda\vert}p^{\vert\lambda\vert}q^{s\vert\lambda\vert}\Big(\prod_{j=1}^{k-1}[1+(s-1)j]_{p, q}\Big)Y^{1+(s-1)k}\bigg)Z_{p}^{k}X^{m-k}\\
&=\sum_{k=0}^{m}h^{k}q^{m-k}\Big(\sum_{\lambda\in \mathcal{I}_{m-k, k}}q^{(s-1)\vert\lambda\vert}p^{\vert\lambda\vert}\Big)\Big(\prod_{j=1}^{k-1}[1+(s-1)j]_{p, q}\Big)Y^{1+(s-1)k}Z_{p}^{k}X^{m-k}.
\end{align*}

\begin{lemma}\label{lpq}
For $m, k\in\mathbb{N}$, such that $m\geq k$, we have
\begin{equation}
\sum_{\lambda\in \mathcal{I}_{m-k. k}}p^{\vert\lambda\vert}q^{(s-1)\vert\lambda\vert}={m\choose k}_{p, q^{s-1}}.
\end{equation}
\end{lemma}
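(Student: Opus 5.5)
The plan is to prove the identity by induction on $m$. I will show that both sides satisfy the same Pascal-type recurrence with the same boundary values. Throughout, write $Q:=q^{s-1}$ and
$$S(m,k):=\sum_{\lambda\in\mathcal{I}_{m-k,k}}(pQ)^{\vert\lambda\vert},$$
where each box of a Young diagram carries the weight $pQ$.

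\emph{Boundary values.} For $k=0$, and also for $k=m$, the set $\mathcal{I}_{m-k,k}$ consists of the empty diagram only, so $S(m,0)=S(m,m)=1$. This agrees with ${m\choose 0}_{p,Q}={m\choose m}_{p,Q}=1$.

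\emph{Recurrence for the left-hand side.} I use the same decomposition of $\mathcal{I}_{m-k,k}$ that appears in the proof of \eqref{noxmy}. A diagram in $\mathcal{I}_{m-k,k}$ either has its first row filled or its last column empty, and these two cases are exclusive. Diagrams of the first kind are exactly the $\lambda^{*}$ with $\lambda\in\mathcal{I}_{m-1-k,k}$, and $\vert\lambda^{*}\vert=\vert\lambda\vert+k$. Diagrams of the second kind are exactly the $\lambda_{*}$ with $\lambda\in\mathcal{I}_{m-k,k-1}$, and $\vert\lambda_{*}\vert=\vert\lambda\vert$. Both maps are bijections onto their images. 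Summing the weights therefore gives
$$S(m,k)=(pQ)^{k}S(m-1,k)+S(m-1,k-1),\qquad 1\le k\le m-1.$$

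\emph{Recurrence for the right-hand side.} Next I derive a Pascal rule for ${m\choose k}_{p,Q}$ directly from the definition of the twin-basic factorials. The tool is the splitting
$$[m]_{p,Q}=Q^{k}[m-k]_{p,Q}+p^{m-k}[k]_{p,Q},$$
which I obtain from $p^{m}-Q^{m}=Q^{k}(p^{m-k}-Q^{m-k})+p^{m-k}(p^{k}-Q^{k})$. It comes together with the symmetric version in which $p$ and $Q$ are interchanged. Multiplying by $[m-1]_{p,Q}!/([k]_{p,Q}![m-k]_{p,Q}!)$ then gives
$${m\choose k}_{p,Q}=Q^{k}{m-1\choose k}_{p,Q}+p^{m-k}{m-1\choose k-1}_{p,Q}.$$
The symmetric splitting gives the analogous rule with $p$ and $Q$ interchanged. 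Once the two recurrences are shown to agree, induction on $m$ finishes the proof.

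\emph{Main obstacle.} The difficulty is precisely the last step: matching these recurrences. The diagram recurrence has coefficients $(pQ)^{k}$ and $1$, while the $(p,Q)$-Pascal rules have coefficients $Q^{k}$ and $p^{m-k}$, or $p^{k}$ and $Q^{m-k}$. In fact, if ${m\choose k}_{p,Q}$ denotes the twin-basic binomial as defined in Section~\ref{mr}, the statement is false as written. Already at $m=2$, $k=1$ the left side is $1+pQ$ while ${2\choose 1}_{p,Q}=p+Q$. Moreover, ${m\choose k}_{p,Q}=p^{k(m-k)}{m\choose k}_{1,Q/p}$, whereas $S(m,k)={m\choose k}_{1,pQ}$. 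The two sides therefore coincide only when $p=1$, which covers exactly the $q$-case of \cite{TMMS2011}. So the lemma can hold as stated only if the paper intends some convention for ${m\choose k}_{p,q^{s-1}}$ other than the twin-basic binomial, one under which it satisfies the Pascal rule
$${m\choose k}=(pQ)^{k}{m-1\choose k}+{m-1\choose k-1}.$$
If the paper intends such a convention, the induction above closes immediately. Before writing the induction out, I would first pin down which definition of the binomial the paper actually uses.
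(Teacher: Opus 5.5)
Your conclusion is right, and it is the essential point. The paper states Lemma~\ref{lpq} without any proof. Under the paper's own twin-basic definition of $\binom{m}{k}_{p,q}$, given explicitly in Section~\ref{mr} and relied on later, e.g.\ in \eqref{xmynJordan2}, the identity is false. So there is no alternative convention left to pin down. Your case $m=2$, $k=1$ settles it, since $(1+pq^{s-1})-(p+q^{s-1})=(1-p)(1-q^{s-1})$. Your diagram recurrence $S(m,k)=(pq^{s-1})^{k}S(m-1,k)+S(m-1,k-1)$, with $S(m,0)=S(m,m)=1$, is exactly the Gaussian Pascal rule. It therefore proves the correct identity $\sum_{\lambda\in\mathcal{I}_{m-k,k}}(pq^{s-1})^{|\lambda|}=\binom{m}{k}_{1,pq^{s-1}}$. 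One correction: the two sides agree for all $m,k$ exactly when $p=1$ \emph{or} $q^{s-1}=1$, not only for $p=1$. When $q^{s-1}=1$ (e.g.\ $s=1$), both sides equal $\binom{m}{k}_{p}$, because $[n]_{p,Q}$ is symmetric in $p$ and $Q$. This is why the shift-algebra formula \eqref{xmynShift} survives.

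It is also worth recording that the algebra itself decides which side is the right one. Lemma~\ref{lem1} gives directly
\[
X^2Y=q^2YX^2+h\bigl(q+pq^{s}\bigr)Y^sZ_pX+h^2[s]_{p,q}Y^{2s-1}Z_p^2 .
\]
This agrees with \eqref{noxmy} and \eqref{p2no2}, i.e.\ with the left-hand side $q(1+pq^{s-1})$, whereas \eqref{xmyone} would give $h(pq+q^{s})$. So the Young-diagram formula is correct and the lemma is what fails. In fact, the inductive proof of \eqref{xmyn} genuinely establishes the diagram-sum version; the lemma is used only to rewrite it. Consequently, unless $p=1$ or $s=1$, every formula obtained through the lemma needs $\binom{m}{k}_{p,q^{s-1}}$ replaced by $\binom{m}{k}_{1,pq^{s-1}}$. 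This affects \eqref{xmyone}, \eqref{xmyn}, \eqref{xmynWeyl}, \eqref{xmynJordan}, \eqref{xmyns3}, Theorem~\ref{thmword} and the structure constants. For instance, for $s=0$ one finds directly $X^2Y=q^2YX^2+h(p+q)Z_pX$, while \eqref{xmynWeyl} predicts $h(1+pq)Z_pX$.
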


Let us introduce the following definition in a similar way as in \cite[Definition 1.25]{TMMS2016}.
\begin{definition}\label{DefpqGenWeyl}
The {\em $(p, q)$-deformed generalized Weyl algebra $A_{s;h|p,q}$} is defined, for $s\in\mathbb{N}_{0}, h\in\mathbb{C}\setminus\{0\}$ and $p, q\in\mathbb{C}$, as the complex unital algebra generated by variables $X, Y$ and $Z_p$ satisfying 
\begin{equation}\label{pqdefgenWeyl}
XY-qYX = h Y^sZ_p, \,\, Z_pY=pYZ_p, \,\, XZ_p=pZ_pX, \,\, Z_1 =I.
\end{equation}
\end{definition}
Note that, for $p=1$, the above definition reduces to the $q$-deformed generalized Weyl algebra $A_{s;h|1,q}=A_{s;h|q}$ \cite[Definition 1.25]{TMMS2016}. The algebra $A_{s;h|q}$ was introduced in \cite{TMMS2011}. The study of the case $p=q=h=1$, was initiated by Burde \cite{Burde2005} (for matrices $X$ and $Y$) and Varvak \cite{AV2005}, see \cite[Section 9.1]{TMMS2016} for more references. 

By virtue of Lemma~\ref{lpq}, we obtain the following result (note that we added in \eqref{xmyone} in the product the factor $[1]_{p, q}=1$ corresponding to $j=0$).

\begin{corollary}\label{xmycor} Let $X, Y$ and $Z_p$ be variables satisfying the commutation relations \eqref{pqdefgenWeyl} of the $(p, q)$-deformed generalized Weyl algebra $A_{s;h|p,q}$. Then the normal ordered form of $X^mY$, where $m \in \mathbb{N}$, is given by
\begin{equation}\label{xmyone}
X^mY=\sum_{k=0}^{m}\bigg(h^{k}q^{m-k}{m\choose k}_{p, q^{s-1}}\prod_{j=0}^{k-1}[1+(s-1)j]_{p, q}\bigg)Y^{1+(s-1)k}Z_{p}^{k}X^{m-k}.
\end{equation}
\end{corollary}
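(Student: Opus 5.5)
The plan is to obtain the corollary directly from the simplified form of \eqref{noxmy} derived just before Lemma~\ref{lpq}. Specialize $f(y)=y^s$, i.e. $\alpha_s=1$ and all other $\alpha_i=0$. Then the sum in Definition~\ref{dgd} collapses to the single term $i_1=\cdots=i_d=s$. This gives $g_d(\lambda;Y)=q^{s|\lambda|}\prod_{j=1}^{d-1}[js+1-j]_{p,q}\,Y^{ds+1-d}$, which is exactly the preceding corollary, since $js+1-j=1+(s-1)j$. For $d=0$ we have $g_0(\emptyset;Y)=Y$, consistent with the empty product.

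Substituting this into \eqref{noxmy} gives, as computed in the display above, the coefficient
\[
h^kq^{m-k}\Big(\sum_{\lambda\in\mathcal{I}_{m-k,k}}(pq^{s-1})^{|\lambda|}\Big)\prod_{j=1}^{k-1}[1+(s-1)j]_{p,q}
\]
of $Y^{1+(s-1)k}Z_p^kX^{m-k}$. The next step is to replace the inner sum by ${m\choose k}_{p,q^{s-1}}$ using Lemma~\ref{lpq}, and then insert the factor $[1]_{p,q}=1$ so that the product runs from $j=0$. This yields \eqref{xmyone}.

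The main point to verify is the range of Lemma~\ref{lpq}. It is stated for $m,k\in\mathbb{N}$, so the boundary case $k=0$ needs a separate check. There $\mathcal{I}_{m,0}$ consists only of the empty diagram, so the sum equals $1={m\choose 0}_{p,q^{s-1}}$, and the term reduces to $q^mYX^m$, as it should. The case $k=m$ is already covered by the lemma and likewise gives $1$.

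If one also wants to justify Lemma~\ref{lpq} itself, I would argue as follows. Diagrams in $\mathcal{I}_{m-k,k}$ are lattice paths, and the decomposition $\lambda\mapsto\lambda^*$ or $\lambda_*$ used in the proof of \eqref{noxmy} gives the recursion
\[
F(m,k)=F(m-1,k)+(pq^{s-1})^{k}F(m-1,k-1).
\]
The aim is then to match this with a Pascal-type recurrence for the $(p,q^{s-1})$-binomial. This matching is where care is needed, because the $(p,Q)$-binomials satisfy $\binom{m}{k}=p^{k}\binom{m-1}{k}+Q^{m-k}\binom{m-1}{k-1}$, whose weights mix $p$ and $Q$ differently. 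So one must check the normalization, possibly by comparing the $q$-power $q^{m-k-|\lambda|}$ combined with the $q^{s|\lambda|}$ factor, rather than assuming the recursions coincide term by term. For the corollary itself, though, Lemma~\ref{lpq} is taken as given.
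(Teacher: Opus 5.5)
\textbf{There is a genuine gap, and it sits exactly in the step you take on faith.} Your route is the paper's own. You collapse $g_d(\lambda;Y)$ for $f(y)=y^s$, substitute into \eqref{noxmy}, replace $\sum_{\lambda\in\mathcal{I}_{m-k,k}}(pq^{s-1})^{|\lambda|}$ by ${m\choose k}_{p,q^{s-1}}$ via Lemma~\ref{lpq}, and pad the product with $[1]_{p,q}$. Everything up to the inner sum is correct. But Lemma~\ref{lpq} is false unless $p=1$ or $q^{s-1}=1$. So neither your argument nor the paper's establishes \eqref{xmyone}, and \eqref{xmyone} is in fact wrong in general.

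Take $m=2$, $k=1$. The sum over $\mathcal{I}_{1,1}=\{\emptyset,1\}$ is $1+pq^{s-1}$, while ${2\choose 1}_{p,q^{s-1}}=[2]_{p,q^{s-1}}=p+q^{s-1}$. They differ by $(1-p)(1-q^{s-1})$. Correspondingly, \eqref{p2no2} (or Example~\ref{Explicitno}) with $f(y)=y^s$ gives the middle term $h(q+pq^{s})Y^sZ_pX$ of $X^2Y$, whereas \eqref{xmyone} gives $h(pq+q^{s})Y^sZ_pX$.

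For an unambiguous check, take $s=0$, $h=1$ and the representation $X=D_{p,q}$, $Y=$ multiplication by $x$, $(Z_pf)(x)=f(px)$, which satisfies \eqref{pqWeyl}. Applied to $f(x)=x$, the left side $X^2Y$ gives $[2]_{p,q}=p+q$. The right side of \eqref{xmyone} gives $q[2]_{p,q^{-1}}=pq+1$, since the $k=0$ and $k=2$ terms vanish.

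\textbf{How to repair it.} Your worry about the normalization was justified, and your recursion would have settled it had the weight been on the correct term. Diagrams in $\mathcal{I}_{m-k,k}$ with empty last column are $\lambda_*$ with $\lambda\in\mathcal{I}_{m-k,k-1}$, with the same weight. Those with full first row are $\lambda^*$ with $\lambda\in\mathcal{I}_{m-k-1,k}$, with weight multiplied by $(pq^{s-1})^k$. Hence $F(m,k)=F(m-1,k-1)+(pq^{s-1})^kF(m-1,k)$; the version you wrote already fails at $m=3$, $k=1$.

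This is the Pascal rule of the ordinary one-variable Gaussian binomial, so $F(m,k)={m\choose k}_{1,pq^{s-1}}$. By contrast, ${m\choose k}_{p,Q}=p^{k(m-k)}{m\choose k}_{1,Q/p}=\sum_{\lambda\in\mathcal{I}_{m-k,k}}p^{k(m-k)-|\lambda|}Q^{|\lambda|}$ puts the $p$-weight on the empty cells of the rectangle rather than on the boxes.

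So what your argument, and the paper's, actually proves is \eqref{xmyone} with ${m\choose k}_{p,q^{s-1}}$ replaced by ${m\choose k}_{1,pq^{s-1}}$. The two versions coincide for all $m$ exactly when $(1-p)(1-q^{s-1})=0$. Examples are $p=1$, or $s=1$, where ${m\choose k}_{p,1}={m\choose k}_{1,p}$ by the symmetry of $[n]_{p,q}$ in $p$ and $q$.
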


\subsection{The normal ordered form of $X^{m}Y^n$}
Formula \eqref{xmyone} can be extended to the case $X^mY^n$ with arbitrary $n \in \mathbb{N}$ as follows.

\begin{proposition}
Let $X, Y$ and $Z_p$ be variables satisfying the commutation relations \eqref{pqdefgenWeyl} of the $(p, q)$-deformed generalized Weyl algebra $A_{s;h|p,q}$. Then the normal ordered form of $X^mY^n$, where $m,n\in \mathbb{N}$, is given by
\begin{equation}\label{xmyn}
X^mY^n=\sum_{k=0}^{m} \bigg( h^kq^{n(m-k)} {m\choose k}_{p, q^{s-1}} \prod_{j=0}^{k-1}[n+(s-1)j]_{p, q}\bigg) Y^{n+(s-1)k}Z_{p}^{k}X^{m-k}.
\end{equation}
\end{proposition}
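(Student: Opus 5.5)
Our plan is to rerun the argument behind Corollary~\ref{xmycor}, replacing $Y$ by the polynomial $L(Y)=Y^n$. The natural generalization of $g_d$ is
\[
g^{(n)}_d(\lambda;Y):=q^{s|\lambda|}\Big(\prod_{j=0}^{d-1}[n+(s-1)j]_{p,q}\Big)Y^{n+(s-1)d}.
\]
The key one-step relation follows from Lemma~\ref{lem1} with $f(Y)=Y^s$, applied with $k=N:=n+(s-1)d$:
\[
XY^{N}=q^{N}Y^{N}X+h[N]_{p,q}Y^{N+s-1}Z_p .
\]
The new $[\cdot]$-factor is exactly $[n+(s-1)d]_{p,q}$, so the product acquires its next factor. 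We will therefore prove directly by induction on $m$ that
\[
X^mY^n=\sum_{k=0}^m h^k c_{m,k}\,Y^{n+(s-1)k}Z_p^kX^{m-k},
\]
where $c_{m,k}=q^{n(m-k)}\binom{m}{k}_{p,q^{s-1}}P_k$ and $P_k=\prod_{j=0}^{k-1}[n+(s-1)j]_{p,q}$. The base case $m=0$ is trivial, and $m=1$ is Lemma~\ref{lem1}.

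For the induction step, we multiply on the left by $X$. In each term we move $X$ past $Y^{n+(s-1)k}$ using the displayed relation, and past $Z_p^k$ using $XZ_p^k=p^kZ_p^kX$ (from \eqref{pqcr3}). The first piece contributes $q^{n+(s-1)k}p^k\,Y^{n+(s-1)k}Z_p^kX^{m+1-k}$. The second piece contributes $h[n+(s-1)k]_{p,q}\,Y^{n+(s-1)(k+1)}Z_p^{k+1}X^{m-k}$. Here we must also commute $Z_p$ past nothing, since it already sits to the right of the $Y$'s, so no extra $p$ appears. Collecting the coefficient of $h^kY^{n+(s-1)k}Z_p^kX^{m+1-k}$ gives the recursion
\[
c_{m+1,k}=q^{n+(s-1)k}p^k c_{m,k}+[n+(s-1)(k-1)]_{p,q}\,c_{m,k-1}.
\]

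Substituting the claimed form and dividing by $q^{n(m+1-k)}P_k$, the recursion reduces to
\[
\binom{m+1}{k}_{p,Q}=p^kQ^{k}\cdots
\]
with $Q=q^{s-1}$. More precisely, it reduces to the identity
\[
\binom{m+1}{k}_{p,Q}=p^kQ^{0}\cdot(\text{stuff})\ldots
\]
We will need to track powers carefully. The $q$-powers are $q^{n+(s-1)k}q^{n(m-k)}=q^{n(m+1-k)}Q^k$ in the first term and $q^{n(m+1-k)}$ in the second. The required identity is therefore the $(p,Q)$-Pascal rule
\[
\binom{m+1}{k}_{p,Q}=p^kQ^k\binom{m}{k}_{p,Q}+\binom{m}{k-1}_{p,Q}.
\]
This does not look symmetric at first sight. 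The main obstacle is to check that this is the correct variant of Pascal's rule. We will verify it via Lemma~\ref{lpq}: split the Young diagrams in $\mathcal I_{m+1-k,k}$ according to whether the first row is full (giving $\lambda^*$, weight $(pQ)^k$ times the weight in $\mathcal I_{m-k,k}$) or the last column is empty (giving $\lambda_*$ from $\mathcal I_{m+1-k,k-1}$). This is exactly the decomposition used in the proof of \eqref{noxmy}, with weight $p^{|\lambda|}Q^{|\lambda|}$.

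That decomposition actually yields $\binom{m+1}{k}=(pQ)^k\binom{m}{k}+\binom{m}{k-1}$, which is what the recursion needs. The sign conventions (where $p^k$ enters from $XZ_p^k$ and where $Q^k$ enters from the $q$-power) should be rechecked against \eqref{xmyone} at $n=1$ as a sanity test.
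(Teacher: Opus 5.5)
Your induction follows the paper's proof. You left-multiply by $X$, apply Lemma~\ref{lem1} to $XY^{n+(s-1)k}$, use $XZ_p^k=p^kZ_p^kX$, and recombine with the $\lambda^*/\lambda_*$ split behind Lemma~\ref{lpq}. Your recursion
\[
c_{m+1,k}=q^{n+(s-1)k}p^k\,c_{m,k}+[n+(s-1)(k-1)]_{p,q}\,c_{m,k-1}
\]
is correct, and so is its reduction to ${m+1\choose k}_{p,Q}=(pQ)^k{m\choose k}_{p,Q}+{m\choose k-1}_{p,Q}$ with $Q=q^{s-1}$.

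The gap is exactly the step you flagged as the main obstacle: this Pascal rule is false for the two-parameter coefficient ${m\choose k}_{p,Q}=[m]_{p,Q}!/([k]_{p,Q}!\,[m-k]_{p,Q}!)$ of Section~\ref{mr}. Already for $m=k=1$ the left side is $[2]_{p,Q}=p+Q$ and the right side is $1+pQ$; they differ by $(1-p)(1-Q)$. The Young-diagram decomposition does prove the rule, but for $G(m,k):=\sum_{\lambda\in\mathcal{I}_{m-k,k}}(pQ)^{|\lambda|}$. That sum is the ordinary one-parameter Gaussian coefficient in the variable $pq^{s-1}$, i.e.\ ${m\choose k}_{1,pq^{s-1}}$ in the paper's notation, not ${m\choose k}_{p,q^{s-1}}$. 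Lemma~\ref{lpq}, which both you and the paper's proof use to make this identification, is wrong as stated: for $\mathcal{I}_{1,1}=\{\emptyset,1\}$ it asserts $1+pQ=p+Q$.

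Your recursion and $c_{0,0}=1$ determine every $c_{m,k}$, so what your argument (and the paper's) actually proves is
\[
X^mY^n=\sum_{k=0}^{m}h^kq^{n(m-k)}{m\choose k}_{1,pq^{s-1}}\prod_{j=0}^{k-1}[n+(s-1)j]_{p,q}\,Y^{n+(s-1)k}Z_p^kX^{m-k}.
\]
This agrees with \eqref{xmyn} only if $p=1$ or $q^{s-1}=1$, and the printed statement genuinely fails otherwise. Take $s=0$, $m=2$, $n=1$: a direct computation gives $X^2Y=q^2YX^2+h(p+q)Z_pX$, while \eqref{xmyn} gives $h(1+pq)Z_pX$. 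For $s=0$ the algebra is the iterated Ore extension $\mathbb{C}[Y][Z_p;\sigma][X;\tau,\delta]$, with $\sigma(Y)=pY$, $\tau(Y)=qY$, $\tau(Z_p)=pZ_p$, $\delta(Y)=hZ_p$, $\delta(Z_p)=0$. Hence the monomials $Y^aZ_p^bX^c$ are linearly independent, and the two expressions differ whenever $(1-p)(1-q)\neq 0$.

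Your proposed sanity check against \eqref{xmyone} cannot catch this, because \eqref{xmyone} rests on the same lemma. Comparing with \eqref{p2no2} for $f(Y)=Y^s$ does catch it: that formula gives the coefficient $hq(1+pq^{s-1})$ of $Y^sZ_pX$, not $hq(p+q^{s-1})$.

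If you replace ${m\choose k}_{p,q^{s-1}}$ by ${m\choose k}_{1,pq^{s-1}}$, your proof goes through unchanged, using the standard rule ${m+1\choose k}_{1,x}=x^k{m\choose k}_{1,x}+{m\choose k-1}_{1,x}$ with $x=pq^{s-1}$.
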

\begin{proof}
We use induction on $m\geq 0$. It is clear that \eqref{xmyn} holds true for $m=0$. Now, we assume that \eqref{xmyn} holds true for $m \geq 0$. Using $X^{m+1}Y^n=X(X^mY^n)$, this yields
$$
X^{m+1}Y^n=\sum_{k=0}^{m}h^kq^{n(m-k)}\sum_{\lambda\in \mathcal{I}_{m-k, k}}q^{(s-1)\vert\lambda\vert}p^{\vert\lambda\vert}\prod_{j=0}^{k-1}[n+(s-1)j]_{p, q}XY^{n+(s-1)k}Z_{p}^{k}X^{m-k},
$$
where we also used Lemma~\ref{lpq}. Applying \eqref{eq1}, we observe that
\begin{align*}
XY^{n+(s-1)k}Z_{p}^{k}X^{m-k}=&q^{n+(s-1)k}Y^{n+(s-1)k}XZ_{p}^{k}X^{m-k}\\
&+h[n+(s-1)k]_{p, q}Y^sY^{n+(s-1)k-1}Z_{p}^{k+1}X^{m-k}.
\end{align*}
It follows that
\begin{align*}
X^{m+1}Y^n=& \sum_{k=0}^{m}h^{k}q^{n(m-k)}\sum_{\lambda\in \mathcal{I}_{m-k, k}}q^{(s-1)\vert\lambda\vert}p^{\vert\lambda\vert}\prod_{j=0}^{k-1}[n+(s-1)j]_{p, q}q^{n+(s-1)k}Y^{n+(s-1)k}XZ_{p}^{k}X^{m-k}\\
& +\sum_{k=0}^{m}h^kq^{n(m-k)}\sum_{\lambda\in \mathcal{I}_{m-k, k}}q^{(s-1)\vert\lambda\vert}p^{\vert\lambda\vert}\prod_{j=0}^{k-1}[n+(s-1)j]_{p, q}h[n+(s-1)k]_{p, q}Y^sY^{n+(s-1)k-1}Z_{p}^{k+1}X^{m-k}\\
=&\sum_{k=0}^{m}h^kq^{n(m+1-k)}\sum_{\lambda\in \mathcal{I}_{m-k, k}}q^{(s-1)(\vert\lambda\vert+k)}p^{k+\vert\lambda\vert}\prod_{j=0}^{k-1}[n+(s-1)j]_{p, q}Y^{n+(s-1)k}Z_{p}^{k}X^{m+1-k}\\
& +\sum_{k=0}^{m}h^{k+1}q^{n(m-k)}\sum_{\lambda\in \mathcal{I}_{m-k, k}}q^{(s-1)\vert\lambda\vert}p^{\vert\lambda\vert}\prod_{j=0}^{k}[n+(s-1)j]_{p, q}Y^{n+(s-1)(k+1)}Z_{p}^{k+1}X^{m-k}\\
=&\sum_{k=0}^{m+1}h^kq^{n(m+1-k)}\left(\sum_{\lambda^{*}\in \mathcal{I}_{m+1-k, k}}q^{(s-1)\vert\lambda^{*}\vert}p^{\vert\lambda^{*}\vert}\right)\prod_{j=0}^{k-1}[n+(s-1)j]_{p, q}Y^{n+(s-1)k}Z_{p}^{k}X^{m+1-k}\\
&+\sum_{k=0}^{m+1}h^{k}q^{n(m+1-k)}\left(\sum_{\lambda_{*}\in \mathcal{I}_{m+1-k, k}}q^{(s-1)\vert\lambda_{*}\vert}p^{\vert\lambda_{*}\vert}\right)\prod_{j=0}^{k-1}[n+(s-1)j]_{p, q}Y^{n+(s-1)k}Z_{p}^{k}X^{m+1-k}\\
=&\sum_{k=0}^{m+1}h^{k}q^{n(m+1-k)}\left(\sum_{\lambda\in \mathcal{I}_{m+1-k, k}}q^{(s-1)\vert\lambda\vert}p^{\vert\lambda\vert}\right)\prod_{j=0}^{k-1}[n+(s-1)j]_{p, q}Y^{n+k(s-1)}Z_{p}^{k}X^{m+1-k},
\end{align*}
as was to be shown.
\end{proof}
It is interesting to consider the coefficients $k=0,1$ in \eqref{xmyn} explicitly, yielding an expansion
\begin{equation}\label{xmynexpand}
X^mY^n= q^{nm} Y^nX^m+ hq^{n(m-1)} [m]_{p, q^{s-1}}[n]_{p, q} Y^{n+(s-1)}Z_{p}X^{m-1}+{\mathcal O}(h^2),
\end{equation}
where we abbreviated by ${\mathcal O}(h^2)$ the sum in \eqref{xmyn} containing the summands for $k \geq 2$. Thus, in lowest order in $h$, the variables $X $ and $Y$ behave as if they were $q$-commuting, but there are corrections with terms containing $h$ in higher order.
\begin{remark}
For $h=0$, the variables $X$ and $Y$ $q$-commute, $XY=qYX$. In that case, \eqref{xmyn} reduces to $X^mY^n=q^{nm}Y^nX^m$, as it should, see also \eqref{xmynexpand}. On the other hand, specializing \eqref{xmyn} to $p=1$, we recover \cite[Corollary 4.11]{TMMS2011} (see also \cite[Theorem 1]{CCG2017}).
\end{remark}
Above, we considered in Corollary~\ref{xmycor} the case $X^mY$. After having derived a formula for the general case $X^mY^n$ in \eqref{xmyn}, we consider the case $XY^n$ in the following example.
\begin{example}
Let $m=1$ and $n \in \mathbb{N}$ arbitrary. From \eqref{xmyn}, we obtain
\begin{align*}
X Y^n=&\sum_{k=0}^{1} \bigg( h^k q^{n(1-k)} {1 \choose k}_{p, q^{s-1}} \prod_{j=0}^{k-1}[n+(s-1)j]_{p, q}\bigg) Y^{n+(s-1)k}Z_{p}^{k}X^{1-k}\\ =& q^{n} Y^{n}X + h [n]_{p, q}Y^{n+(s-1)}Z_{p},
\end{align*}
which is exactly the contents of Lemma~\ref{lem1} for $f(y)=y^s$ (see also \cite[Proposition 3.1]{LO2021}).
\end{example}

\subsection{The special cases $s=0,1,2, 3$.}
In the following sections, we consider \eqref{xmyn} for $s \in \{0,1,2,3 \}$, corresponding to $(p,q)$-deformations of well-known structures.

\subsubsection{The $(p,q)$-deformed Weyl algebra $(s=0)$}\label{SectpqWeyl}
Let $s=0$. Then \eqref{pqdefgenWeyl} reduces to
\begin{equation}\label{pqWeyl}
XY-qYX = h Z_p, \,\, Z_pY=pYZ_p, \,\, XZ_p=pZ_pX, \,\, Z_1 =I,
\end{equation}
which is a kind of $(p,q)$-deformation of the Weyl algebra. In fact, the relations \eqref{pqWeyl} (for $h=1$) are exactly the defining relations of the algebra $H_{p^{-1},q}$ considered by Gaddis \cite{JG2016} in an algebraic fashion. Specializing the parameter $p$ further, one obtains other algebras considered before. For example, letting $p=q$, we obtain (for $h=1$) the relations $XY-qYX = Z, \,ZY=qYZ, \,XZ=qZX$ discussed by Kirkman and Small \cite{EKLS1993}. Letting $p=q^2$, we obtain (for $h=1$) the relations
\begin{equation}\label{Blumen}
XY-qYX = Z, \,\, ZY=q^2YZ, \,\, XZ=q^2ZX
\end{equation}
considered by Blumen \cite{SB2006}. Choosing $p=1$ and $h=1$, one recovers from \eqref{pqWeyl} the commutation relation $XY-qYX = I$ of the $q$-deformed Weyl algebra (see, e.g., \cite{TMMS2016}).

For $s=0$, \eqref{xmyn} reduces to
\begin{equation}\label{xmynWeyl}
X^mY^n=\sum_{k=0}^{m} \bigg( h^kq^{n(m-k)} {m\choose k}_{p, q^{-1}} \frac{[n]_{p, q}!}{[n-k]_{p, q}!}\bigg) Y^{n-k}Z_{p}^{k}X^{m-k},
\end{equation}
where we used $\displaystyle\prod_{j=0}^{k-1}[n-j]_{p, q}=\frac{[n]_{p, q}!}{[n-k]_{p, q}!}$. One should regard \eqref{xmynWeyl} as the central commutation relation of the $(p,q)$-deformation of the Weyl algebra. Specializing further to $p=1$, we use that
$$
{m\choose k}_{1, q^{-1}}={m\choose k}_{q^{-1}}=q^{k(k-m)}{m\choose k}_{q}
$$
as well as $\frac{[n]_{1,q}!}{[n-k]_{1,q}!}=\frac{[n]_{q}!}{[n-k]_{q}!}=[k]_{q}!{n\choose k}_{q}$ and $Z_1=I$ to simplify \eqref{xmynWeyl} as follows,
$$
X^mY^n=\sum_{k=0}^{\min(m, n)} h^kq^{(n-k)(m-k)}[k]_{q}!{m\choose k}_{q}{n\choose k}_{q} Y^{n-k}X^{m-k}.
$$
This is a well-known normal ordering result in the $q$-deformed Weyl algebra (see \cite[Proposition 7.13]{TMMS2016}), as expected. Note that we can write \eqref{xmynWeyl} in a similar form as
\begin{equation}\label{xmynWeyl2}
X^mY^n=\sum_{k=0}^{m} h^kq^{n(m-k)} [k]_{p,q}!{m\choose k}_{p, q^{-1}} {n\choose k}_{p, q}  Y^{n-k}Z_{p}^{k}X^{m-k}.
\end{equation}
We can use this to derive the structure constants of the $(p,q)$-deformed Weyl algebra. If we write the basis elements in normal ordered form as $Y^rZ_p^sX^t$, where $r,s,t \in \mathbb{N}_0$, we have to consider the product of two elements $(Y^{r_2}Z_p^{s_2}X^{t_2})\cdot (Y^{r_1}Z_p^{s_1}X^{t_1})$. Thus, using \eqref{xmynWeyl2} for $X^{t_2}Y^{r_1}$, we obtain
$$
(Y^{r_2}Z_p^{s_2}X^{t_2})\cdot (Y^{r_1}Z_p^{s_1}X^{t_1})=\sum_{k=0}^{t_2} h^kq^{r_1(t_2-k)} [k]_{p,q}!{t_2\choose k}_{p, q^{-1}} {r_1\choose k}_{p, q}  Y^{r_2}Z_p^{s_2}Y^{r_1-k}Z_{p}^{k}X^{t_2-k}Z_p^{s_1}X^{t_1}.
$$
From \eqref{pqcr2}, we obtain $Z_p^{s_2}Y^{r_1-k}=p^{s_2(r_1-k)}Y^{r_1-k}Z_p^{s_2}$, and from \eqref{pqcr3} that $X^{t_2-k}Z_p^{s_1}=p^{s_1(t_2-k)}Z_p^{s_1}X^{t_2-k}$. This shows the following result.

\begin{proposition}\label{structurepqWeyl} Let $X,Y$ and $Z_p$ satisfy the relations \eqref{pqWeyl} of the $(p,q)$-deformed Weyl algebra. For $r_i,s_i,t_i \in \mathbb{N}_0$, $i=1,2$, one has the normal ordering result
$$
(Y^{r_2}Z_p^{s_2}X^{t_2})\cdot (Y^{r_1}Z_p^{s_1}X^{t_1})=\sum_{k=0}^{t_2} \mathscr{C}_{p,q,h}(r_1,r_2,s_1,s_2,t_1,t_2|k) \, Y^{r_2+r_1-k}Z_p^{s_2+s_1+k}X^{t_2+t_1-k},
$$
where the structure constants are given by
$$
\mathscr{C}_{p,q,h}(r_1,r_2,s_1,s_2,t_1,t_2|k)=h^k p^{s_2r_1+s_1t_2-k(s_2+s_1)}q^{r_1(t_2-k)} [k]_{p,q}!{t_2\choose k}_{p, q^{-1}} {r_1\choose k}_{p, q}.
$$
\end{proposition}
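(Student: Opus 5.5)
The plan is to carry out the computation sketched just before the statement, keeping careful track of the powers of $p$.

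We start from the product $(Y^{r_2}Z_p^{s_2}X^{t_2})\cdot (Y^{r_1}Z_p^{s_1}X^{t_1})$ and normal order the middle factor $X^{t_2}Y^{r_1}$ using \eqref{xmynWeyl2}. This produces the sum over $k$ of $h^kq^{r_1(t_2-k)}[k]_{p,q}!{t_2\choose k}_{p,q^{-1}}{r_1\choose k}_{p,q}$ times the word
$$Y^{r_2}Z_p^{s_2}Y^{r_1-k}Z_p^{k}X^{t_2-k}Z_p^{s_1}X^{t_1}.$$
Terms with $k>r_1$ vanish because ${r_1\choose k}_{p,q}=0$ there. So we may assume $r_1-k\ge 0$, and the word is a genuine monomial.

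Next we bring this word into normal order. First, $Z_p^{s_2}Y^{r_1-k}=p^{s_2(r_1-k)}Y^{r_1-k}Z_p^{s_2}$. This follows from iterating \eqref{pqcr2}: each $Z_p$ passing each $Y$ produces one factor $p$, which is a trivial induction. Similarly, \eqref{pqcr3} gives $X^{t_2-k}Z_p^{s_1}=p^{s_1(t_2-k)}Z_p^{s_1}X^{t_2-k}$. After these two moves the word becomes
$$Y^{r_2+r_1-k}Z_p^{s_2+k+s_1}X^{t_2-k+t_1}$$
with scalar $p^{s_2(r_1-k)+s_1(t_2-k)}=p^{s_2r_1+s_1t_2-k(s_2+s_1)}$. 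Multiplying this scalar into the coefficient from \eqref{xmynWeyl2} gives exactly $\mathscr{C}_{p,q,h}$.

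The only genuinely nontrivial ingredient is \eqref{xmynWeyl2}, which is the $s=0$ specialization of \eqref{xmyn}. The step there is to rewrite $\prod_{j=0}^{k-1}[n-j]_{p,q}$ as $[k]_{p,q}!{n\choose k}_{p,q}$. This identity holds directly from the definitions as long as $k\le n$, and both sides vanish when $k>n$.

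The main point needing care is this edge case $k>r_1$. There the product contains the factor $[0]_{p,q}=0$, so the whole term vanishes, and we must make sure that the apparently negative power $Y^{r_1-k}$ never actually appears. Apart from that, everything is bookkeeping of exponents.
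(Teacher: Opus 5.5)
Your argument is the same as the paper's: apply \eqref{xmynWeyl2} to $X^{t_2}Y^{r_1}$, then move $Z_p^{s_2}$ past $Y^{r_1-k}$ and $Z_p^{s_1}$ past $X^{t_2-k}$. That second step is correct, including the exponent $s_2(r_1-k)+s_1(t_2-k)$. The gap is the ingredient you called the only nontrivial one and then did not check. \eqref{xmynWeyl2} is false when $p\neq 1$ and $q\neq 1$, and so is the proposition as printed; no argument can prove it. Take $r_2=s_2=s_1=t_1=0$, $r_1=1$, $t_2=2$. Directly from \eqref{pqWeyl},
\[
X^2Y=X(qYX+hZ_p)=q(qYX+hZ_p)X+hpZ_pX=q^2YX^2+h(p+q)Z_pX,
\]
whereas the stated constant for $k=1$ is $hq\binom{2}{1}_{p,q^{-1}}=hq(p+q^{-1})=h(1+pq)$. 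The discrepancy $h(1-p)(1-q)Z_pX$ is genuinely nonzero. In the representation $X=D_{p,q}$, $Y=x$, $(Z_pf)(x)=f(px)$, which satisfies \eqref{pqWeyl} with $h=1$, apply both sides to $x$: the left gives $[2]_{p,q}=p+q$ and the right gives $1+pq$.

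The error comes from Lemma~\ref{lpq}. The induction behind \eqref{xmyn} correctly produces the weight $G_{m,k}=\sum_{\lambda\in\mathcal{I}_{m-k,k}}(pq^{s-1})^{|\lambda|}$. Splitting $\mathcal{I}_{m+1-k,k}$ into diagrams with a full first row or an empty last column gives $G_{m+1,k}=(pq^{s-1})^kG_{m,k}+G_{m,k-1}$. So $G_{m,k}$ is the one-variable Gaussian binomial in $pq^{s-1}$, with $G_{2,1}=1+pq^{s-1}$, and not $\binom{m}{k}_{p,q^{s-1}}$, which gives $p+q^{s-1}$. For $s=0$ this yields $G_{m,k}=q^{-k(m-k)}\binom{m}{k}_{p,q}$, so the correct replacement for \eqref{xmynWeyl2} is
\[
X^mY^n=\sum_{k=0}^{m}h^kq^{(m-k)(n-k)}[k]_{p,q}!\binom{m}{k}_{p,q}\binom{n}{k}_{p,q}Y^{n-k}Z_p^kX^{m-k}.
\]
Feeding this into your two-step argument proves the proposition with $q^{(r_1-k)(t_2-k)}\binom{t_2}{k}_{p,q}$ in place of $q^{r_1(t_2-k)}\binom{t_2}{k}_{p,q^{-1}}$, with all other factors unchanged. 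The two versions coincide when $p=1$ or $q=1$. That is why the paper's reduction to the $q$-deformed Weyl algebra does not expose the problem.
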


\subsubsection{The $(p,q)$-deformed shift algebra $(s=1)$}\label{Sectpqshift}
Let $s=1$. Then \eqref{pqdefgenWeyl} reduces to
\begin{equation}\label{pqshift}
XY-qYX = h Y Z_p, \,\, Z_pY=pYZ_p, \,\, XZ_p=pZ_pX, \,\, Z_1 =I,
\end{equation}
which is a kind of $(p,q)$-deformation of the shift algebra. Choosing $p=1$, one recovers the commutation relation $XY-qYX = h Y$ of the $q$-deformed shift algebra (see, e.g., \cite[Section 8.4.3]{TMMS2016} and the references given therein).

For $s=1$, \eqref{xmyn} reduces to
\begin{equation}\label{xmynShift}
X^mY^n=\sum_{k=0}^{m} \bigg( h^kq^{n(m-k)} {m\choose k}_{p} ([n]_{p, q})^{k}\bigg) Y^{n}Z_{p}^{k}X^{m-k},
\end{equation}
where we used that ${m\choose k}_{p,q^{s-1}}\big|_{s=1}={m\choose k}_{p,1}={m\choose k}_{p}$. Specializing further to $p=1$, one obtains (recall $Z_1=I$)
\begin{equation}\label{xmynShiftp}
X^mY^n=\sum_{k=0}^{m} \bigg( h^kq^{n(m-k)} {m\choose k} ([n]_{q})^{k}\bigg) Y^{n}X^{m-k}.
\end{equation}
Specializing further by letting $q=1$, this reduces to
$$
X^mY^n=\sum_{k=0}^{m} h^k {m\choose k} n^{k}Y^{n}X^{m-k}=Y^n (X+hn)^m.
$$
The formula $X^mY^n=Y^n (X+hn)^m$ is well known for the shift algebra in the conventions used here (see \cite{RAS1958, RMW1967, WW1974}, or also \cite[Section 8.4.3]{TMMS2016} and the references given therein). It is also easy to see directly: The commutation relation \eqref{pqcr1} reduces for $p=q=1$ and $f(y)=y$ to $XY-YX=hY$, or, $XY=Y(X+h)$. By induction, one easily shows that $XY^n=Y^n(X+hn)$ as well as $X^mY=Y(X+h)^m$. Combined, one obtains $X^mY^n=Y^n (X+hn)^m$. By linearity, one has for any polynomial $L\in \mathbb{C}[X]$ that $L(X)Y^n=Y^nL(X+hn)$, see \cite[Section 8.4.3]{TMMS2016} and the references given therein.

Note that we can write \eqref{xmynShiftp} equivalently as
\begin{equation}\label{xmynShiftp2}
X^mY^n=Y^n(q^nX+h[n]_q)^m.
\end{equation}
We can write \eqref{xmynShift} also in this form, as the following proposition shows.
\begin{proposition} Let $X,Y$ and $Z_p$ satisfy the relations \eqref{pqshift} of the $(p,q)$-deformed shift algebra. Then one has, for $m,n \in \mathbb{N}$,
$$
X^mY^n=Y^n(q^nX+h[n]_{p,q}Z_p)^m.
$$
By linearity, one has for any polynomial $L\in \mathbb{C}[X]$ that
$$
L(X)Y^n=Y^nL(q^nX+h[n]_{p,q}Z_p).
$$
\end{proposition}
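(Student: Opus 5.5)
The plan is to read off the claimed identity from \eqref{xmynShift}, which is already established. The right-hand side $Y^n(q^nX+h[n]_{p,q}Z_p)^m$ expands into a sum of normal ordered terms $Y^nZ_p^kX^{m-k}$, and I expect the coefficients to be exactly the $p$-binomial coefficients appearing in \eqref{xmynShift}.

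First set $A:=q^nX$ and $B:=h[n]_{p,q}Z_p$. By \eqref{pqcr3} we have $XZ_p=pZ_pX$, so $AB=pBA$. Hence $A$ and $B$ form a $p$-commuting pair of the kind that occurs in the quantum plane \eqref{qcrintro}.

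Next I would prove the noncommutative $p$-binomial formula
$$(A+B)^m=\sum_{k=0}^m {m\choose k}_{p}B^kA^{m-k}$$
for such a pair, where ${m\choose k}_p={m\choose k}_{p,1}$ is exactly the coefficient ${m\choose k}_{p,q^{s-1}}\big|_{s=1}$ used in \eqref{xmynShift}. The argument is induction on $m$, multiplying $(A+B)^m$ on the left by $A+B$.
\begin{itemize}
\item From $AB=pBA$ we get $AB^k=p^kB^kA$, so $A\,B^kA^{m-k}=p^kB^kA^{m+1-k}$.
\item The term $B\,B^{k-1}A^{m-k+1}$ needs no reordering.
\item Collecting terms, the induction step therefore reduces to the Pascal-type rule ${m+1\choose k}_p={m\choose k-1}_p+p^k{m\choose k}_p$.
\end{itemize}
This rule follows from $[m+1]_{p,1}=[k]_{p,1}+p^k[m+1-k]_{p,1}$, which is immediate from $[n]_{p,1}=\frac{p^n-1}{p-1}=1+p+\cdots+p^{n-1}$. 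As a sanity check, for $m=2$ one gets $A^2+(1+p)BA+B^2$, as expected.

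Substituting back gives $B^kA^{m-k}=h^k([n]_{p,q})^kq^{n(m-k)}Z_p^kX^{m-k}$. Multiplying on the left by $Y^n$ then reproduces term by term the right-hand side of \eqref{xmynShift}, which equals $X^mY^n$. The case $m=0$ is trivial.

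For the second assertion, write $L(X)=\sum_m c_mX^m$ and apply the first identity to each monomial. Linearity then gives $L(X)Y^n=Y^n\sum_m c_m(q^nX+h[n]_{p,q}Z_p)^m=Y^nL(q^nX+h[n]_{p,q}Z_p)$. This is well defined because substituting an element of the algebra into a one-variable polynomial is unambiguous.

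The only step requiring care is the $p$-binomial expansion. The main risk is getting the ordering wrong: the formula must have the $B$'s (the $Z_p$'s) to the left of the $A$'s (the $X$'s), with the weight $p^k$ arising from moving $A$ past $B^k$. That ordering is what matches the normal ordered form in \eqref{xmynShift}.
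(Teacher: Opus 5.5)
Your proof is correct and follows the paper's argument. You treat $q^nX$ and $h[n]_{p,q}Z_p$ as a $p$-commuting pair, expand with the $p$-binomial formula (with the $Z_p$'s to the left), match term by term with \eqref{xmynShift}, and then extend by linearity. The only difference is that you prove the $p$-binomial formula yourself by induction via $\binom{m+1}{k}_p=\binom{m}{k-1}_p+p^k\binom{m}{k}_p$, whereas the paper simply cites Potter and Sch\"{u}tzenberger.
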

\begin{proof}
Let us first observe that $X$ and $Z_p$ $p$-commute, i.e., $XZ_p=pZ_pX$. Recall that for $q$-commuting variables $UV=qVU$  one has the the $q$-binomial formula  $(U+V)^n=\sum_{k=0}^{n}\binom{n}{k}_{q}V^kU^{n-k}$ due to Potter \cite{HSAP1950} and Schützenberger \cite{MPS1953}. Thus, 
$$
(X+Z_p)^m=\sum_{k=0}^m {m\choose k}_p Z_p^k X^{m-k}.
$$
Thus, inserting the factors $q^n$ and $h[n]_{p,q}$, we have that
$$
Y^n (q^nX+h[n]_{p,q}Z_p)^m=\sum_{k=0}^m {m\choose k}_p q^{n(m-k)} h^k ([n]_{p,q})^k Y^n Z_p^k X^{m-k}.
$$
By \eqref{xmynShift}, the right-hand side is equal to $X^mY^n$.
\end{proof}
If we specialize to $p=1$, one obtains $L(X)Y^n=Y^nL(q^nX+h[n]_{q})$, which is \cite[Proposition 8.62]{TMMS2016}. Sau \cite{JS1978} generalized the classical shift algebra to the situation where $h$ is not a constant, but a polynomial in $X$, i.e., he considered the relation $XY-YX=Yh(X)$ with $h\in \mathbb{C}[X]$. He observed that in this situation one has $L(X)Y=YL(X+h(X))$ where $L\in \mathbb{C}[X]$. In the $q$-deformed situation where $XY-qYX=Yh(X)$ the same argument shows that $L(X)Y=YL(qX+h(X))$, see \cite[Proposition 8.63]{TMMS2016}. 
\begin{proposition} Let $X,Y$ and $Z_p$ satisfy $XY-qYX = Y Z_p h(X)$ where $h\in \mathbb{C}[X]$ (and where $Z_pY=pYZ_p, XZ_p=pZ_pX, Z_1 =I$). Then one has, for any $L\in \mathbb{C}[X]$, that
\begin{equation}\label{pqsau}
L(X)Y=YL(qX+Z_ph(X)).
\end{equation}
\end{proposition}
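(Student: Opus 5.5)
By linearity in $L$, it suffices to prove the identity for monomials $L(X)=X^m$, that is,
$$X^mY=Y\,(qX+Z_ph(X))^m \qquad (m\in\mathbb{N}_0).$$
We argue by induction on $m$. The case $m=0$ is trivial. The case $m=1$ is the defining relation rewritten as $XY=qYX+YZ_ph(X)=Y(qX+Z_ph(X))$.

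For the inductive step, write $T:=qX+Z_ph(X)$, which lies in the subalgebra generated by $X$ and $Z_p$. Assuming $X^mY=YT^m$, we compute
$$X^{m+1}Y=X^m(XY)=X^mY\,T=YT^mT=YT^{m+1}.$$
Here we first use $XY=YT$, and then pull $X^m$ past $Y$ by the induction hypothesis. The key observation is that in this order of multiplication, $X^m$ only ever has to pass $Y$ and never has to pass $Z_p$ or $h(X)$. So the relation $XZ_p=pZ_pX$ is never used, and $T$ simply accumulates on the right. Alternatively, one can do the step as $X^{m+1}Y=X(YT^m)=(XY)T^m=YT^{m+1}$, which is equally immediate.

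I expect no real obstacle; the main point is to check the ordering carefully. One must make sure the result is read as $Y$ times the noncommutative polynomial $L(T)$, with $T=qX+Z_ph(X)$ kept as a single element and powers of $T$ not expanded. Expanding $T^m$ would require the $p$-commutation of $X$ and $Z_p$, and this is not needed for the identity as stated. Linearity then gives $L(X)Y=\sum a_mX^mY=Y\sum a_mT^m=YL(T)$.

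As a consistency check, I would specialize to $h(X)=h$ constant. Then $Z_ph(X)=hZ_p$, and the formula reduces to the previous proposition with $n=1$, namely $L(X)Y=YL(qX+h[1]_{p,q}Z_p)$.
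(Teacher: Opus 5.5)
Your proof is correct and follows essentially the same route as the paper: rewrite the relation as $XY=Y(qX+Z_ph(X))$, show $X^mY=Y(qX+Z_ph(X))^m$ by induction (your alternative step $X(YT^m)=(XY)T^m$ is the paper's step), and conclude by linearity. Your point that $T$ must be kept unexpanded because $X$ and $Z_p$ do not commute is also the content of the remark the paper makes right after this proposition.
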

\begin{proof}
Note that we can write $XY=Y(qX+Z_ph(X))$. Thus, 
$$
X^2Y=X(XY)=XY(qX+Z_ph(X))=Y(qX+Z_ph(X))^2.
$$
By induction, $X^mY=Y(qX+Z_ph(X))^m$. The assertion follows by linearity. 
\end{proof}
\begin{remark}
Note that \eqref{pqsau} is the straightforward generalization of Sau's observation to the $(p,q)$-deformed situation. In contrast to the classical or the $q$-deformed situation, the variables $X$ and $Z_p$ on the right-hand side of \eqref{pqsau} do not commute, so one has to be careful when expanding the right-hand side.
\end{remark}
Using \eqref{xmynShift}, we can show the following analog to Proposition~\ref{structurepqWeyl}.
\begin{proposition}\label{structurepqshift} Let $X,Y$ and $Z_p$ satisfy the relations \eqref{pqshift} of the $(p,q)$-deformed shift algebra. For $r_i,s_i,t_i \in \mathbb{N}_0$, $i=1,2$, one has the normal ordering result
$$
(Y^{r_2}Z_p^{s_2}X^{t_2})\cdot (Y^{r_1}Z_p^{s_1}X^{t_1})=\sum_{k=0}^{t_2} \mathscr{D}_{p,q,h}(r_1,r_2,s_1,s_2,t_1,t_2|k) \, Y^{r_2+r_1}Z_p^{s_2+s_1+k}X^{t_2+t_1-k},
$$
where the structure constants are given by
$$
\mathscr{D}_{p,q,h}(r_1,r_2,s_1,s_2,t_1,t_2|k)= h^k  p^{s_2r_1+s_1t_2-ks_1}  q^{r_1(t_2-k)} {t_2\choose k}_{p} ([r_1]_{p, q})^{k}.
$$
\end{proposition}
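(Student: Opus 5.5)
The plan is to mirror the argument that gave Proposition~\ref{structurepqWeyl}, replacing \eqref{xmynWeyl2} by its shift-algebra analogue \eqref{xmynShift}. Write the product as $Y^{r_2}Z_p^{s_2}\,(X^{t_2}Y^{r_1})\,Z_p^{s_1}X^{t_1}$ and apply \eqref{xmynShift} with $m=t_2$ and $n=r_1$ to the middle factor. This gives
$$
(Y^{r_2}Z_p^{s_2}X^{t_2})\cdot (Y^{r_1}Z_p^{s_1}X^{t_1})=\sum_{k=0}^{t_2} h^kq^{r_1(t_2-k)}{t_2\choose k}_{p}([r_1]_{p,q})^{k}\,Y^{r_2}Z_p^{s_2}Y^{r_1}Z_p^{k}X^{t_2-k}Z_p^{s_1}X^{t_1}.
$$
Note that the power of $Y$ stays $r_1$ here, unlike the Weyl case, since $s=1$.

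The second step is to move the remaining letters into normal order using only \eqref{pqcr2} and \eqref{pqcr3}, iterated.
\begin{itemize}
\item From $Z_pY=pYZ_p$ we get $Z_p^{s_2}Y^{r_1}=p^{s_2r_1}Y^{r_1}Z_p^{s_2}$.
\item From $XZ_p=pZ_pX$ we get $X^{t_2-k}Z_p^{s_1}=p^{s_1(t_2-k)}Z_p^{s_1}X^{t_2-k}$.
\end{itemize}
Each of these follows by a routine double induction on the exponents. After these swaps every $Y$ stands left of every $Z_p$, which stands left of every $X$. The term becomes $Y^{r_2+r_1}Z_p^{s_2+k+s_1}X^{t_2-k+t_1}$, multiplied by $p^{s_2r_1+s_1t_2-ks_1}$.

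Finally, collect the coefficients. The total $p$-exponent is $s_2r_1+s_1(t_2-k)$, which matches $\mathscr{D}_{p,q,h}$. There is no $-ks_2$ term, in contrast to $\mathscr{C}_{p,q,h}$, because $Z_p^{s_2}$ now passes $Y^{r_1}$ rather than $Y^{r_1-k}$.

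The only point needing care is this exponent bookkeeping, i.e. checking that $Z_p^{s_2}$ crosses exactly $r_1$ copies of $Y$ and $Z_p^{s_1}$ is crossed by exactly $t_2-k$ copies of $X$. Everything else is a direct substitution, so I expect no real obstacle.
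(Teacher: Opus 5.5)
Your proposal is correct and follows the same route as the paper. The paper's only indication is ``using \eqref{xmynShift}'' in analogy with Proposition~\ref{structurepqWeyl}: expand the middle factor $X^{t_2}Y^{r_1}$, then apply $Z_p^{s_2}Y^{r_1}=p^{s_2r_1}Y^{r_1}Z_p^{s_2}$ and $X^{t_2-k}Z_p^{s_1}=p^{s_1(t_2-k)}Z_p^{s_1}X^{t_2-k}$. Your observation that the $-ks_2$ term disappears because $Z_p^{s_2}$ now crosses all $r_1$ letters $Y$ is exactly the right bookkeeping.
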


\subsubsection{The $(p,q)$-deformed Jordan plane $(s=2)$}\label{SectpqJordan}
Let $s=2$. Then \eqref{pqdefgenWeyl} reduces to
\begin{equation}\label{pqJordan}
XY-qYX = h Y^2 Z_p, \,\, Z_pY=pYZ_p, \,\, XZ_p=pZ_pX, \,\, Z_1 =I,
\end{equation}
which is a kind of $(p,q)$-deformation of the Jordan plane. Choosing $p=1$, one recovers the commutation relation of the $q$-deformed Jordan plane (see \cite[Section 7.4]{TMMS2016} -- where it is also called {\em $q$-meromorphic Weyl algebra} -- and the references therein).

For $s=2$, \eqref{xmyn} reduces to
\begin{equation}\label{xmynJordan}
X^mY^n=\sum_{k=0}^{m} \bigg( h^kq^{n(m-k)} {m\choose k}_{p, q} \prod_{j=0}^{k-1}[n+j]_{p, q}\bigg) Y^{n+k}Z_{p}^{k}X^{m-k}.
\end{equation}
Using $\displaystyle\prod_{j=0}^{k-1}[n+j]_{p,q}=[k]_q! {n+k-1\choose k}_{p,q}$, we can write this equivalently as
\begin{equation}\label{xmynJordan2}
X^mY^n=\sum_{k=0}^{m} \bigg( h^kq^{n(m-k)} {n+k-1\choose k}_{p,q} \frac{[m]_{p,q}!}{[m-k]_{p,q}!}\bigg) Y^{n+k}Z_{p}^{k}X^{m-k}.
\end{equation}
Specializing further to $p=1$, this reduces to
$$
X^mY^n=\sum_{k=0}^{m} \bigg( h^kq^{n(m-k)} {n+k-1\choose k}_{q} \frac{[m]_q!}{[m-k]_q!}\bigg) Y^{n+k}X^{m-k},
$$
a well-known result for the $q$-deformed Jordan plane (see \cite[Proposition 7.61]{TMMS2016} and references therein), as expected. Using \eqref{xmynJordan2}, we can show the following analog to Proposition~\ref{structurepqWeyl} and Proposition~\ref{structurepqshift}.
\begin{proposition}\label{structurepqJordan} Let $X,Y$ and $Z_p$ satisfy the relations \eqref{pqJordan} of the $(p,q)$-deformed Jordan plane. For $r_i,s_i,t_i \in \mathbb{N}_0$, $i=1,2$, one has the normal ordering result
$$
(Y^{r_2}Z_p^{s_2}X^{t_2})\cdot (Y^{r_1}Z_p^{s_1}X^{t_1})=\sum_{k=0}^{t_2} \mathscr{E}_{p,q,h}(r_1,r_2,s_1,s_2,t_1,t_2|k) \, Y^{r_2+r_1+k}Z_p^{s_2+s_1+k}X^{t_2+t_1-k},
$$
where the structure constants are given by
$$
\mathscr{E}_{p,q,h}(r_1,r_2,s_1,s_2,t_1,t_2|k)= h^k  p^{s_2r_1+s_1t_2+k(s_2-s_1)}  q^{r_1(t_2-k)} {r_1+k-1\choose k}_{p,q} \frac{[t_2]_{p,q}!}{[t_2-k]_{p,q}!}.
$$
\end{proposition}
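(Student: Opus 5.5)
The plan is to follow the same route as for Proposition~\ref{structurepqWeyl}, now using the normal ordering formula \eqref{xmynJordan2} of the $(p,q)$-deformed Jordan plane. In the product
$$
(Y^{r_2}Z_p^{s_2}X^{t_2})\cdot (Y^{r_1}Z_p^{s_1}X^{t_1})
$$
only the middle factor $X^{t_2}Y^{r_1}$ is out of order. So the first step is to replace it, via \eqref{xmynJordan2} with $m=t_2$ and $n=r_1$, by
$$
\sum_{k=0}^{t_2} h^kq^{r_1(t_2-k)} {r_1+k-1\choose k}_{p,q} \frac{[t_2]_{p,q}!}{[t_2-k]_{p,q}!}\, Y^{r_1+k}Z_{p}^{k}X^{t_2-k}.
$$
Each summand of the product then becomes a scalar multiple of
$$
Y^{r_2}Z_p^{s_2}\,Y^{r_1+k}Z_p^{k}X^{t_2-k}\,Z_p^{s_1}X^{t_1}.
$$

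Next, I bring this word into normal order using only \eqref{pqcr2} and \eqref{pqcr3}. Iterating \eqref{pqcr2} gives $Z_p^{s_2}Y^{r_1+k}=p^{s_2(r_1+k)}Y^{r_1+k}Z_p^{s_2}$. Iterating \eqref{pqcr3} gives $X^{t_2-k}Z_p^{s_1}=p^{s_1(t_2-k)}Z_p^{s_1}X^{t_2-k}$. After collecting powers, the word equals
$$
p^{s_2(r_1+k)+s_1(t_2-k)}\,Y^{r_2+r_1+k}Z_p^{s_2+k+s_1}X^{t_2-k+t_1}.
$$
The exponent of $p$ rearranges to $s_2r_1+s_1t_2+k(s_2-s_1)$. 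Combining this with the coefficient from \eqref{xmynJordan2} yields exactly $\mathscr{E}_{p,q,h}(r_1,r_2,s_1,s_2,t_1,t_2|k)$, with the stated monomial.

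The only step needing real care is the boundary cases, since \eqref{xmynJordan2} was stated for $m,n\in\mathbb{N}$. If $t_2=0$, the sum has the single term $k=0$ and the claim is just the two $p$-commutations. If $r_1=0$, the middle factor is simply $X^{t_2}$. Here I would argue from the product form \eqref{xmynJordan} rather than \eqref{xmynJordan2}: the factor $[r_1+0]_{p,q}=[0]_{p,q}=0$ kills every term with $k\geq1$. Under the natural convention, ${k-1\choose k}_{p,q}=0$ for $k\ge1$ and $=1$ for $k=0$, so the stated formula again reduces to its $k=0$ term, which is correct. Apart from these cases, the proof is a direct bookkeeping of $p$-powers and involves no real obstacle.
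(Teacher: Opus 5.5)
Your route is the paper's route: substitute \eqref{xmynJordan2} for the only out-of-order factor $X^{t_2}Y^{r_1}$, then use \eqref{pqcr2} and \eqref{pqcr3} to move $Z_p^{s_2}$ past $Y^{r_1+k}$ and $X^{t_2-k}$ past $Z_p^{s_1}$. Your exponent $s_2(r_1+k)+s_1(t_2-k)$ and your treatment of $t_2=0$ and $r_1=0$ are correct. The step you call routine is the one that fails, however. Formula \eqref{xmynJordan2} is wrong for $p\neq 1$, and so is the proposition. Take $t_2=2$, $r_1=1$ and all other exponents $0$, i.e.\ the word $X^2Y$. Directly from \eqref{pqJordan}, using $XY^2=q^2Y^2X+h[2]_{p,q}Y^3Z_p$ and $XZ_p=pZ_pX$,
\[
X^2Y=qXYX+hXY^2Z_p=q^2YX^2+hq(1+pq)\,Y^2Z_pX+h^2[2]_{p,q}\,Y^3Z_p^2 .
\]
The stated $\mathscr{E}_{p,q,h}$ instead gives $hq(p+q)$ as the coefficient of $Y^2Z_pX$. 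The two expressions differ by $hq(1-p)(1-q)\,Y^2Z_pX$. This difference is nonzero in $A_{2;h|p,q}$. The only ambiguity of the three rewriting rules is the word $XZ_pY$, and resolving it forces the extra relation $hp(p^2-1)\,Y^2Z_p^2=0$. After adjoining $Y^2Z_p^2=0$ (when $p(p^2-1)\neq0$), the system is confluent and $Y^2Z_pX$ is still irreducible, so by the diamond lemma it is a basis element. For $p=\pm1$ no extra relation arises at all.

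The error comes from Lemma~\ref{lpq}, on which \eqref{xmyn}, \eqref{xmynJordan} and \eqref{xmynJordan2} rest. The sum $\sum_{\lambda\in\mathcal{I}_{m-k,k}}(pq^{s-1})^{|\lambda|}$ is the Gaussian polynomial in the single variable $pq^{s-1}$, i.e.\ ${m\choose k}_{1,pq^{s-1}}$, not ${m\choose k}_{p,q^{s-1}}$. Already for $m=2$, $k=1$ one gets $1+pq^{s-1}$ versus $p+q^{s-1}$. The induction proving \eqref{xmyn} is fine as long as it keeps the sum over $\lambda$; only this closed-form identification is wrong. For $s=2$ the correct formula is
\[
X^mY^n=\sum_{k=0}^{m}h^kq^{n(m-k)}{m\choose k}_{1,pq}\prod_{j=0}^{k-1}[n+j]_{p,q}\,Y^{n+k}Z_p^kX^{m-k},
\]
and here ${m\choose k}_{1,pq}[k]_{p,q}!$ does not collapse to $[m]_{p,q}!/[m-k]_{p,q}!$. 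If you feed this corrected formula into your argument, your $p$-bookkeeping goes through unchanged. It yields the coefficients $h^kp^{s_2r_1+s_1t_2+k(s_2-s_1)}q^{r_1(t_2-k)}{t_2\choose k}_{1,pq}\prod_{j=0}^{k-1}[r_1+j]_{p,q}$. The stated $\mathscr{E}_{p,q,h}$ coincides with these only for $p=1$ or $q=1$. Moreover, because $Y^2Z_p^2=0$ when $p^2\neq1$, such coefficients are not uniquely determined once $r_2+r_1+k\geq2$ and $s_2+s_1+k\geq2$.
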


Before closing this section, let us mention that recently a different kind of deformation of the Jordan plane has been introduced in  \cite{ABGB2022} where the two generators $\mathcal{X}$ and $\mathcal{Z}$ satisfy $\mathcal{Z}\mathcal{X}-\mathcal{X}\mathcal{Z}=\mathcal{Z}^2+\Delta I$ with a parameter $\Delta \in \mathbb{R}$.

\subsubsection{The case $s=3$}\label{Sects3}
In this case, \eqref{pqdefgenWeyl} reduces to
\begin{equation}\label{pqdefgens3Weyl}
XY-qYX = h Y^3Z_p, \,\, Z_pY=pYZ_p, \,\, XZ_p=pZ_pX, \,\, Z_1 =I.
\end{equation}
Hence, \eqref{pqdefgens3Weyl} can be seen as the defining relations of the $(p, q)$-deformed generalized Weyl algebra $A_{3;h|p,q}$. For $s=3$, \eqref{xmyn} reduces 
\begin{align}
X^mY^n&=\sum_{k=0}^{m} \bigg( h^kq^{n(m-k)} {m\choose k}_{p, q^{2}} \prod_{j=0}^{k-1}[n+2j]_{p, q}\bigg) Y^{n+2k}Z_{p}^{k}X^{m-k}\nonumber\\
&=\sum_{k=0}^{m} \bigg( h^kq^{n(m-k)} {m\choose k}_{p, q^{2}} [n|-2]_{k|p, q}\bigg) Y^{n+2k}Z_{p}^{k}X^{m-k}\label{xmyns3},
\end{align}
where, for $z,x \in\mathbb{C}$ and $n\in\mathbb{N}$, $[x|z]_{n|p, q}=\prod\limits_{k=0}^{n-1}[x-kz]_{p, q}$ is the $(p, q)$-deformed generalized factorial with $[x|z]_{0|p, q}=1$. Using \eqref{xmyns3}, we can show the following analog to Proposition~\ref{structurepqWeyl}, Proposition~\ref{structurepqshift} and Proposition~\ref{structurepqJordan}.
\begin{proposition}\label{structurepqdgWeyl}
Let $X,Y$ and $Z_p$ satisfy the relations \eqref{pqdefgens3Weyl} of the $(p,q)$-deformed generalized Weyl algebra $A_{3;h|p,q}$. For $r_i,s_i,t_i \in \mathbb{N}_0$, $i=1,2$, one has the normal ordering result
$$
(Y^{r_2}Z_p^{s_2}X^{t_2})\cdot (Y^{r_1}Z_p^{s_1}X^{t_1})=\sum_{k=0}^{t_2} \mathscr{E}_{p,q,h}(r_1,r_2,s_1,s_2,t_1,t_2|k) \, Y^{r_2+r_1+2k}Z_p^{s_2+s_1+k}X^{t_2+t_1-k},
$$
where the structure constants are given by
$$
\mathscr{E}_{p,q,h}(r_1,r_2,s_1,s_2,t_1,t_2|k)= h^k  p^{s_2r_1+s_1t_2+k(2s_2-s_1)}  q^{r_1(t_2-k)} {t_2\choose k}_{p,q^2} [r_1|-2]_{k|p;q}.
$$
\end{proposition}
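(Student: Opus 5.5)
The plan is to argue exactly as in the derivation of Proposition~\ref{structurepqWeyl}: reduce the product to a single normal ordering of $X^{t_2}Y^{r_1}$, for which \eqref{xmyns3} applies directly, and then move the remaining powers of $Z_p$ into place with the $p$-commutation relations. Concretely, associativity gives
$$
(Y^{r_2}Z_p^{s_2}X^{t_2})\cdot (Y^{r_1}Z_p^{s_1}X^{t_1})=Y^{r_2}Z_p^{s_2}\,\bigl(X^{t_2}Y^{r_1}\bigr)\,Z_p^{s_1}X^{t_1}.
$$
I will substitute \eqref{xmyns3} with $m=t_2$ and $n=r_1$ (the case $t_2=0$ or $r_1=0$ being trivial). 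This produces
$$
\sum_{k=0}^{t_2} h^k q^{r_1(t_2-k)}{t_2\choose k}_{p,q^2}[r_1|-2]_{k|p,q}\; Y^{r_2}Z_p^{s_2}Y^{r_1+2k}Z_p^{k}X^{t_2-k}Z_p^{s_1}X^{t_1}.
$$

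Next I normalize each summand using two consequences of \eqref{pqdefgens3Weyl}, both obtained by a trivial double induction:
$$
Z_p^{a}Y^{b}=p^{ab}Y^{b}Z_p^{a},\qquad X^{c}Z_p^{a}=p^{ac}Z_p^{a}X^{c}.
$$
With $a=s_2$ and $b=r_1+2k$, the first one contributes a factor $p^{s_2(r_1+2k)}$. With $c=t_2-k$ and $a=s_1$, the second contributes $p^{s_1(t_2-k)}$. After these two moves the word is $Y^{r_2+r_1+2k}Z_p^{s_2+k+s_1}X^{t_2-k+t_1}$, which is normal ordered.

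Collecting the powers of $p$ gives
$$
s_2r_1+2ks_2+s_1t_2-ks_1=s_2r_1+s_1t_2+k(2s_2-s_1),
$$
which is exactly the exponent in the claimed $\mathscr{E}_{p,q,h}$. The remaining factors $h^k q^{r_1(t_2-k)}{t_2\choose k}_{p,q^2}[r_1|-2]_{k|p,q}$ are carried over unchanged from \eqref{xmyns3}. The summation range $0\le k\le t_2$ is the one in \eqref{xmyns3}, so no truncation is needed.

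There is no real obstacle: the only care needed is the bookkeeping of the $p$-exponents. In particular, the power of $Y$ that $Z_p^{s_2}$ passes is $r_1+2k$, not $r_1-k$ as in the Weyl case; this is what changes $-ks_2$ into $+2ks_2$. I will also note that the notation $[r_1|-2]_{k|p;q}$ in the statement is the generalized factorial $[r_1|-2]_{k|p,q}=\prod_{j=0}^{k-1}[r_1+2j]_{p,q}$ defined after \eqref{xmyns3}.
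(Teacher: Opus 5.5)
Your reduction is the route the paper intends: it is the argument used for Proposition~\ref{structurepqWeyl}, with \eqref{xmyns3} in place of \eqref{xmynWeyl2}. Your $p$-bookkeeping is also correct: $Z_p^{s_2}$ passes $Y^{r_1+2k}$ and $X^{t_2-k}$ passes $Z_p^{s_1}$, giving $p^{s_2r_1+s_1t_2+k(2s_2-s_1)}$.

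The problem is the input. \eqref{xmyns3}, and therefore the proposition itself, is false for $p\neq 1$, so your argument only carries the error forward. Take the smallest case $r_2=s_1=s_2=t_1=0$, $r_1=1$, $t_2=2$. Using $XY=qYX+hY^3Z_p$, $XY^3=q^3Y^3X+h[3]_{p,q}Y^5Z_p$ (Lemma~\ref{lem1}) and $XZ_p=pZ_pX$, you get
\[
X^2Y=q(XY)X+h(XY^3)Z_p=q^2YX^2+h(q+pq^3)Y^3Z_pX+h^2[3]_{p,q}Y^5Z_p^2 .
\]
So the coefficient of $hY^3Z_pX$ is $q(1+pq^2)$. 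The claimed formula gives $q{2\choose 1}_{p,q^2}=q(p+q^2)$, and the difference is $q(1-p)(1-q^2)$.

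The error comes from Lemma~\ref{lpq}. In the induction proving \eqref{xmyn}, write the coefficient as $h^kq^{n(m-k)}C(m,k)\prod_{j=0}^{k-1}[n+(s-1)j]_{p,q}$. The factor $C(m,k)$ then satisfies $C(m+1,k)=(pq^{s-1})^kC(m,k)+C(m,k-1)$ with $C(0,0)=1$. Here $q^{(s-1)k}$ is left over from $XY^{n+(s-1)k}$, and $p^k$ comes from $XZ_p^k$. This is the Gaussian Pascal rule in the single base $pq^{s-1}$, so
\[
C(m,k)=\sum_{\lambda\in\mathcal{I}_{m-k,k}}(pq^{s-1})^{|\lambda|}={m\choose k}_{1,pq^{s-1}}.
\]
By contrast, ${m\choose k}_{p,q^{s-1}}=\sum_{\lambda\in\mathcal{I}_{m-k,k}}p^{k(m-k)-|\lambda|}q^{(s-1)|\lambda|}$. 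The two agree only when $s=1$ or $p=1$, which is why the shift-algebra case and the $p=1$ specializations in the paper come out right.

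The correct structure constant has ${t_2\choose k}_{1,pq^2}$ in place of ${t_2\choose k}_{p,q^2}$, with all other factors as stated. Once \eqref{xmyns3} is corrected in this way, your argument goes through verbatim. As written, the proposition cannot be proved, since it already fails for $X^2Y$. Checking the smallest nontrivial case against a direct computation exposes this.
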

\section{Normal ordering arbitrary words in the $(p,q)$-deformed generalized Weyl algebra}\label{gno}

In this section, we consider normal ordering arbitrary words $\omega$ in $X$ and $Y$ which satisfy the commutation relations \eqref{pqdefgenWeyl} of the $(p, q)$-deformed generalized Weyl algebra $A_{s;h|p,q}$. Such a word can be written in the form
\begin{equation}\label{wordxy}
\omega=X^{m_r}Y^{n_r}\cdots X^{m_2}Y^{n_2}X^{m_1}Y^{n_1}
\end{equation}
with suitable exponents $m_k,n_k \in \mathbb{N}_0=\mathbb{N}\cup\{0\}$. The resulting normal ordering coefficients can be interpreted as generalized Stirling numbers and have a connection to rook numbers.

\subsection{Normal ordering arbitrary words} To start, we consider normal ordering $X^{m_2}Y^{n_2}X^{m_1}Y^{n_1}$. This can be done in two steps: First commuting $X^{m_1}$ and $Y^{n_1}$ using \eqref{xmyn}, then commuting $X^{m_2}$ and $Y^{n_1+*}$ using again \eqref{xmyn}. Let us abbreviate for the following
$$
a_{s;p,q}(n,k):=\prod_{j=0}^{k-1}[n+(s-1)j]_{p, q}.
$$
The above-mentioned first step yields
$$
X^{m_2}Y^{n_2}X^{m_1}Y^{n_1}=\sum_{k_1=0}^{m_1} \bigg( h^{k_1}q^{n_1(m_1-k_1)} {m_1\choose k_1}_{p, q^{s-1}} a_{s;p,q}(n_1,k_1) \bigg) X^{m_2}Y^{n_2+n_1+(s-1)k_1}Z_{p}^{k_1}X^{m_1-k_1}.
$$
Now, as second step we have to commute $X^{m_2}$ and $Y^{n_2+n_1+(s-1)k_1}$ using \eqref{xmyn},
\begin{align*}
X^{m_2}Y^{n_2+n_1+(s-1)k_1}=&\sum_{k_2=0}^{m_2} \bigg( h^{k_2}q^{(n_2+n_1+(s-1)k_1)(m_2-k_2)} {m_2\choose k_2}_{p, q^{s-1}}a_{s;p,q}(n_2+n_1+(s-1)k_1,k_2)\bigg) \\ & Y^{n_2+n_1+(s-1)(k_1+k_2)}Z_{p}^{k_2}X^{m_2-k_2}.
\end{align*}
Inserting this, one obtains
\begin{align*}
X^{m_2}Y^{n_2}&X^{m_1}Y^{n_1}= \sum_{k_1=0}^{m_1} \sum_{k_2=0}^{m_2} \bigg\{ h^{k_1+k_2}q^{n_1(m_1-k_1)+(n_2+n_1+(s-1)k_1)(m_2-k_2)} {m_1\choose k_1}_{p, q^{s-1}}{m_2\choose k_2}_{p, q^{s-1}}  \\ &  a_{s;p,q}(n_1,k_1) a_{s;p,q}(n_2+n_1+(s-1)k_1,k_2)\bigg\} Y^{n_2+n_1+(s-1)(k_1+k_2)}Z_{p}^{k_2}X^{m_2-k_2} Z_{p}^{k_1}X^{m_1-k_1}.
\end{align*}
Using $X^{m_2-k_2} Z_{p}^{k_1}= p^{k_1(m_2-k_2)}Z_{p}^{k_1}X^{m_2-k_2}$, one finds
$$
Y^{n_2+n_1+(s-1)(k_1+k_2)}Z_{p}^{k_2}X^{m_2-k_2} Z_{p}^{k_1}X^{m_1-k_1}=p^{k_1(m_2-k_2)} Y^{n_2+n_1+(s-1)(k_1+k_2)}Z_{p}^{k_1+k_2}X^{m_1+m_2-(k_1+k_2)}.
$$
To introduce some further abbreviations, let us write ${\bf n}=(n_2,n_1)$ and denote $|{\bf n}|=n_1+n_2$, and, similarly, for ${\bf m}$ and ${\bf k}$. Denoting also
$$
{\bf m\choose \bf k}_{p, q^{s-1}}={m_2\choose k_2}_{p, q^{s-1}}{m_1\choose k_1}_{p, q^{s-1}},
$$
we can write the above result more briefly as
\begin{align*}
X^{m_2}Y^{n_2}X^{m_1}Y^{n_1}=& \sum_{k_1=0}^{m_1} \sum_{k_2=0}^{m_2} \bigg\{ h^{|{\bf k}|} p^{k_1(m_2-k_2)} q^{n_1(m_1-k_1)+(n_2+n_1+(s-1)k_1)(m_2-k_2)} {\bf m\choose \bf k}_{p, q^{s-1}}  \\ &  a_{s;p,q}(n_1,k_1) a_{s;p,q}(n_2+n_1+(s-1)k_1,k_2)\bigg\} Y^{|{\bf n}|+(s-1)|{\bf k}|}Z_{p}^{|{\bf k}|}X^{|{\bf m}|-|{\bf k}|} . 
\end{align*}
We can iterate this to see how a recursive structure appears in the coefficients,
\begin{align*}
X^{m_3}Y^{n_3}&X^{m_2}Y^{n_2}X^{m_1}Y^{n_1}= \sum_{k_1=0}^{m_1} \sum_{k_2=0}^{m_2} \bigg\{ h^{k_1+k_2}q^{n_1(m_1-k_1)+(n_2+n_1+(s-1)k_1)(m_2-k_2)} {m_1\choose k_1}_{p, q^{s-1}}{m_2\choose k_2}_{p, q^{s-1}}  \\ &  a_{s;p,q}(n_1,k_1) a_{s;p,q}(n_2+n_1+(s-1)k_1,k_2)\bigg\} X^{m_3}Y^{n_3+n_2+n_1+(s-1)(k_1+k_2)}Z_{p}^{k_1+k_2}X^{m_2+m_1-(k_1+k_2)}.
\end{align*}
Now, we have to commute $X^{m_3}$ and $Y^{n_3n_2+n_1+(s-1)(k_1+k_2)}$ using \eqref{xmyn},
\begin{align*}
X^{m_3}&Y^{n_3+n_2+n_1+(s-1)(k_1+k_2)}=\sum_{k_3=0}^{m_3} \bigg( h^{k_3}q^{(n_3+n_2+n_1+(s-1)(k_1+k_2))(m_3-k_3)} {m_3\choose k_3}_{p, q^{s-1}} \\ & a_{s;p,q}(n_3+n_2+n_1+(s-1)(k_1+k_2),k_3)\bigg)  Y^{n_3+n_2+n_1+(s-1)(k_1+k_2+k_3)}Z_{p}^{k_3}X^{m_3-k_3}.
\end{align*}
Inserting this, using $X^{m_3-k_3}Z_p^{k_1+k_2}=p^{(k_1+k_2)(m_3-k_3)}Z_p^{k_1+k_2}X^{m_3-k_3}$, and using the notation introduced above (extended from $r=2$ to arbitrary $r$), we obtain for the right-hand side:
\begin{align*}
 \sum_{k_1=0}^{m_1} \sum_{k_2=0}^{m_2}\sum_{k_3=0}^{m_3} \bigg\{ & h^{|{\bf k}|} q^{n_1(m_1-k_1)+(n_1+n_2+(s-1)k_1)(m_2-k_2)+(n_1+n_2+n_3+(s-1)(k_1+k_2))(m_3-k_3)}{\bf m\choose \bf k}_{p, q^{s-1}}  \\ & p^{k_1(m_2-k_2)+(k_1+k_2)(m_3-k_3)} a_{s;p,q}(n_1,k_1) a_{s;p,q}(n_1+n_2+(s-1)k_1,k_2)) \\ & a_{s;p,q}(n_1+n_2+n_3+(s-1)(k_1+k_2),k_3) \bigg\} Y^{|{\bf n}|+(s-1)|{\bf k}|}Z_{p}^{|{\bf k}|}X^{|{\bf m}|-|{\bf k}|} .
\end{align*}
Let us introduce some further notations to get less clumsy formulas. Let us first abbreviate $({\bf 0},{\bf m})_r$ to be the set of all ${\bf k}=(k_1,\ldots,k_r) \in \mathbb{N}_0^r$ such that $0 \leq k_j \leq m_j $ for $j=1,\ldots,r$. Furthermore, let us introduce
\begin{align*}
{\mathcal Q}_{r|s}({\bf m},{\bf n},{\bf k}):=&\sum_{j=1}^r (m_j-k_j)\left\{\sum_{l=1}^{j}n_l+(s-1)\sum_{l=1}^{j-1}k_l\right\},\\
{\mathcal P}_r({\bf m},{\bf k}):=& \sum_{j=1}^{r-1}(m_{j+1}-k_{j+1})\sum_{l=1}^{j}k_l, \\
{\mathcal A}_{r|s;p,q}({\bf n},{\bf k}):=& \prod_{j=1}^r a_{s;p,q}\left(\sum_{l=1}^{j}n_l+(s-1)\sum_{l=1}^{j-1}k_l,k_j\right)=\prod_{j=1}^r \prod_{\ell =0}^{k_j-1}\left[\sum_{l=1}^{j}n_l+(s-1)\sum_{l=1}^{j-1}k_l+(s-1) \ell \right]_{p, q}.
\end{align*}
Then we can write the above result compactly as
\begin{align*}
X^{m_3}Y^{n_3}X^{m_2}Y^{n_2} X^{m_1}Y^{n_1}=\sum_{{\bf k}  \in ({\bf 0},{\bf m})_3} \bigg\{ & h^{|{\bf k}|} p^{{\mathcal P}_3({\bf m},{\bf k})}  q^{{\mathcal Q}_{3|s}({\bf m},{\bf n},{\bf k})}  {\bf m\choose \bf k}_{p, q^{s-1}} \\ & {\mathcal A}_{3|s;p,q}({\bf n},{\bf k}) \bigg\} Y^{|{\bf n}|+(s-1)|{\bf k}|}Z_{p}^{|{\bf k}|}X^{|{\bf m}|-|{\bf k}|} .
\end{align*}
Thus, it is natural to conjecture that the above structure holds in general.
\begin{theorem}\label{thmword} Let $X, Y$ and $Z_p$ be variables satisfying the commutation relations \eqref{pqdefgenWeyl} of the $(p, q)$-deformed generalized Weyl algebra $A_{s;h|p,q}$. Then the normal ordered form of an arbitrary word $\omega=X^{m_r}Y^{n_r}\cdots X^{m_1}Y^{n_1}$, $m_k,n_k \in \mathbb{N}_0$,  is given by
\begin{equation}\label{Main}
\omega=\sum_{{\bf k}  \in ({\bf 0},{\bf m})_r} \bigg\{ h^{|{\bf k}|} p^{{\mathcal P}_r({\bf m},{\bf k})}  q^{{\mathcal Q}_{r|s}({\bf m},{\bf n},{\bf k})}  {\bf m\choose \bf k}_{p, q^{s-1}}  {\mathcal A}_{r|s;p,q}({\bf n},{\bf k}) \bigg\} Y^{|{\bf n}|+(s-1)|{\bf k}|}Z_{p}^{|{\bf k}|}X^{|{\bf m}|-|{\bf k}|}.
\end{equation}
\end{theorem}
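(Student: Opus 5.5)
We prove \eqref{Main} by induction on the number $r$ of blocks $X^{m_j}Y^{n_j}$, following exactly the computation carried out above for $r=2$ and $r=3$. For $r=1$, the sets and exponents reduce to ${\mathcal P}_1=0$, ${\mathcal Q}_{1|s}=(m_1-k_1)n_1$ and ${\mathcal A}_{1|s;p,q}=a_{s;p,q}(n_1,k_1)$. With these values, \eqref{Main} is precisely \eqref{xmyn}, so the base case holds. (The case $r=0$, the empty word, is trivial as well.)

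For the induction step, write $\omega'=X^{m_{r+1}}Y^{n_{r+1}}\omega$, where $\omega$ satisfies \eqref{Main}. Multiplying \eqref{Main} on the left by $X^{m_{r+1}}Y^{n_{r+1}}$ produces terms of the form
$$
X^{m_{r+1}}Y^{N+(s-1)K}Z_p^{K}X^{M-K}, \qquad N=\sum_{l=1}^{r+1}n_l,\quad K=\sum_{l=1}^{r}k_l,\quad M=\sum_{l=1}^{r}m_l .
$$
We then apply \eqref{xmyn} with exponent $n=N+(s-1)K$ and summation index $k_{r+1}$. This yields
$$
h^{k_{r+1}}\,q^{(N+(s-1)K)(m_{r+1}-k_{r+1})}{m_{r+1}\choose k_{r+1}}_{p,q^{s-1}}a_{s;p,q}(N+(s-1)K,k_{r+1})\,Y^{N+(s-1)(K+k_{r+1})}Z_p^{k_{r+1}}X^{m_{r+1}-k_{r+1}} .
$$
Next we move $X^{m_{r+1}-k_{r+1}}$ past $Z_p^{K}$ using the relation $XZ_p=pZ_pX$. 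This gives the factor $p^{K(m_{r+1}-k_{r+1})}$ and the monomial $Y^{|{\bf n}|+(s-1)|{\bf k}|}Z_p^{|{\bf k}|}X^{|{\bf m}|-|{\bf k}|}$ for the extended vectors.

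It remains to check that the new factors combine with the old ones into the $(r+1)$-versions of the four quantities, which is bookkeeping against the definitions:
\begin{itemize}
\item The new $q$-exponent $(m_{r+1}-k_{r+1})\{\sum_{l\le r+1}n_l+(s-1)\sum_{l\le r}k_l\}$ is exactly the $j=r+1$ summand of ${\mathcal Q}_{r+1|s}$.
\item The $p$-exponent $K(m_{r+1}-k_{r+1})$ is exactly the $j=r$ summand of ${\mathcal P}_{r+1}$.
\item The factor $a_{s;p,q}(\cdot,k_{r+1})$ is the $j=r+1$ factor of ${\mathcal A}_{r+1|s;p,q}$.
\item The $(p,q^{s-1})$-binomial and the power $h^{k_{r+1}}$ multiply into ${{\bf m}\choose{\bf k}}_{p,q^{s-1}}$ and $h^{|{\bf k}|}$.
\end{itemize}
Finally, the double sum over ${\bf k}\in({\bf 0},{\bf m})_r$ and $0\le k_{r+1}\le m_{r+1}$ is exactly the sum over $({\bf 0},{\bf m})_{r+1}$.

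There is no genuine obstacle here; the only delicate point is the indexing convention. The word is read from the right, so the newly added block must be the leftmost one with index $r+1$. This is consistent with the recursive definitions, since each new block contributes only its own terms and leaves the earlier terms of ${\mathcal Q}$, ${\mathcal P}$ and ${\mathcal A}$ unchanged. We must also make sure the $Z_p$ produced by the new block lies to the left of the old $Z_p^{K}$, so that only $X$ needs to be commuted past $Z_p$, and never $Y$. Vanishing exponents, such as $m_j=0$ or $n_j=0$, cause no problem, because \eqref{xmyn} remains valid for them, with empty products equal to $1$.
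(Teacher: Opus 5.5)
Your proposal is correct and follows the paper's proof essentially step for step. Both argue by induction on $r$, apply \eqref{xmyn} to $X^{m_{r+1}}Y^{n_{r+1}+|{\bf n}|+(s-1)|{\bf k}|}$, commute $X^{m_{r+1}-k_{r+1}}$ past $Z_p^{|{\bf k}|}$ to pick up $p^{|{\bf k}|(m_{r+1}-k_{r+1})}$, and match the new factors against the recursive structure of ${\mathcal Q}_{r+1|s}$, ${\mathcal P}_{r+1}$ and ${\mathcal A}_{r+1|s;p,q}$. The only difference is that you take $r=1$ (which is exactly \eqref{xmyn}) as the base case, whereas the paper points to its explicit computations for $r=1,2,3$; this is immaterial.
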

\begin{proof} We give a proof by induction in $r\in \mathbb{N}$. Clearly, for $r=1,2,3$ the above formula holds true, as was discussed above. So it remains to show the formula for $r+1$, assuming it holds true for $r$. Thus, let us consider $
X^{m_{r+1}}Y^{n_{r+1}}X^{m_r}Y^{n_r}\cdots X^{m_1}Y^{n_1}$. Using the induction hypothesis, this equals
$$
\sum_{{\bf k}  \in ({\bf 0},{\bf m})_r} \bigg\{ h^{|{\bf k}|} p^{{\mathcal P}_r({\bf m},{\bf k})}  q^{{\mathcal Q}_{r|s}({\bf m},{\bf n},{\bf k})}  {\bf m\choose \bf k}_{p, q^{s-1}}  {\mathcal A}_{r|s;p,q}({\bf n},{\bf k}) \bigg\} X^{m_{r+1}}Y^{n_{r+1}+|{\bf n}|+(s-1)|{\bf k}|}Z_{p}^{|{\bf k}|}X^{|{\bf m}|-|{\bf k}|},
$$
where the bold vectors still have $r$ components. Now, one has to use \eqref{xmyn} to find
\begin{align*}
X^{m_{r+1}}&Y^{n_{r+1}+|{\bf n}|+(s-1)|{\bf k}|}=\sum_{k_{r+1}=0}^{m_{r+1}} \bigg( h^{k_{r+1}}q^{(n_{r+1}+|{\bf n}|+(s-1)|{\bf k}|)(m_{r+1}-k_{r+1})} {m_{r+1}\choose k_{r+1}}_{p, q^{s-1}} \\ & \prod_{j=0}^{k_{r+1}-1}[n_{r+1}+|{\bf n}|+(s-1)|{\bf k}|+(s-1)j]_{p, q}\bigg) Y^{n_{r+1}+|{\bf n}|+(s-1)|{\bf k}|+(s-1)k_{r+1}}Z_{p}^{k_{r+1}}X^{m_{r+1}-k_{r+1}}.
\end{align*}
Inserting this, using $X^{m_{r+1}-k_{r+1}}Z_p^{|{\bf k}|}=p^{|{\bf k}|(m_{r+1}-k_{r+1})}Z_p^{|{\bf k}|}X^{m_{r+1}-k_{r+1}}$, and collecting terms, one finds
\begin{align*}
\sum_{{\bf k}  \in ({\bf 0},{\bf m})_r}\sum_{k_{r+1}=0}^{m_{r+1}} & \bigg\{ h^{|{\bf k}|+k_{r+1}} p^{{\mathcal P}_r({\bf m},{\bf k})+|{\bf k}|(m_{r+1}-k_{r+1})} q^{{\mathcal Q}_{r|s}({\bf m},{\bf n},{\bf k})+(n_{r+1}+|{\bf n}|+(s-1)|{\bf k}|)(m_{r+1}-k_{r+1})}  {\bf m\choose \bf k}_{p, q^{s-1}} \\ & {m_{r+1}\choose k_{r+1}}_{p, q^{s-1}}     {\mathcal A}_{r|s;p,q}({\bf n},{\bf k}) a_{s;p,q}(n_{r+1}+|{\bf n}|+(s-1)|{\bf k}|,k_{r+1}) \bigg\} \\ & Y^{n_{r+1}+|{\bf n}|+(s-1)(|{\bf k}|+k_{r+1})}Z_{p}^{|{\bf k}|+k_{r+1}}X^{|{\bf m}|+m_{r+1}-(|{\bf k}|+k_{r+1})}
\end{align*}
Let us denote the vectors with $r+1$ components by ${\bf m}'=(m_{r+1},m_r,\ldots,m_1)=(m_{r+1},{\bf m})$, and, similarly, for ${\bf n}'$ and  ${\bf k}'$. The above definitions of the auxiliary functions show that
\begin{align*}
{\mathcal Q}_{r+1|s}({\bf m}',{\bf n}',{\bf k}')=&{\mathcal Q}_{r|s}({\bf m},{\bf n},{\bf k})+(m_{r+1}-k_{r+1})\left\{\sum_{l=1}^{r+1}n_l+(s-1)\sum_{l=1}^{r}k_l\right\}\\
=& {\mathcal Q}_{r|s}({\bf m},{\bf n},{\bf k})+(m_{r+1}-k_{r+1})(n_{r+1}+|{\bf n}|+(s-1)|{\bf k}|),\\
{\mathcal P}_{r+1}({\bf m}',{\bf k}')=&{\mathcal P}_{r}({\bf m},{\bf k}) +(m_{r+1}-k_{r+1})\sum_{l=1}^r k_l\\
=& {\mathcal P}_{r}({\bf m},{\bf k}) +(m_{r+1}-k_{r+1})|{\bf k}|, \\
{\mathcal A}_{r+1|s;p,q}({\bf n}',{\bf k}')=&{\mathcal A}_{r|s;p,q}({\bf n},{\bf k}) a_{s;p,q}\left(\sum_{l=1}^{r+1}n_l+(s-1)\sum_{l=1}^{r}k_l,k_{r+1}\right)\\
= &{\mathcal A}_{r|s;p,q}({\bf n},{\bf k}) a_{s;p,q}\left(n_{r+1}+|{\bf n}|+(s-1)|{\bf k}|,k_{r+1}\right).
\end{align*}
Observing $|{\bf n}'|=|{\bf n}|+n_{r+1}$ (and, similarly, for $|{\bf m}'|$ and  $|{\bf k}'|$), and using 
$${\bf m'\choose \bf k'}_{p, q^{s-1}}={m_{r+1}\choose k_{r+1}}_{p, q^{s-1}} {\bf m\choose \bf k}_{p, q^{s-1}}, 
$$ 
we can, therefore, write the preceding sum as
\begin{align*}
\sum_{{\bf k}'  \in ({\bf 0}',{\bf m}')_{r+1}} & \bigg\{ h^{|{\bf k}'|} p^{{\mathcal P}_{r+1}({\bf m}',{\bf k}')} q^{{\mathcal Q}_{r+1|s}({\bf m}',{\bf n}',{\bf k}')}  {\bf m'\choose \bf k'}_{p, q^{s-1}} {\mathcal A}_{r+1|s;p,q}({\bf n}',{\bf k}')\bigg\} Y^{|{\bf n}'|+(s-1)|{\bf k}'|}Z_{p}^{|{\bf k}'|}X^{|{\bf m}'|-|{\bf k}'|},
\end{align*}
as requested.
\end{proof}
By splitting the sum, we can write \eqref{Main} equivalently as
\begin{equation}\label{MainOther}
\omega=\sum_{k=0}^{|{\bf m}|} h^k \mathscr{J}_{p,q,s}({\bf m},{\bf n},k) Y^{|{\bf n}|+(s-1)k}Z_{p}^{k}X^{|{\bf m}|-k},
\end{equation}
where we introduced the coefficients
\begin{equation}\label{MainOtherCo}
\mathscr{J}_{p,q,s}({\bf m},{\bf n},k):=\sum_{{\bf k}  \in ({\bf 0},{\bf m})_r \atop |{\bf k}|=k} \bigg\{  p^{{\mathcal P}_r({\bf m},{\bf k})}  q^{{\mathcal Q}_{r|s}({\bf m},{\bf n},{\bf k})}  {\bf m\choose \bf k}_{p, q^{s-1}}  {\mathcal A}_{r|s;p,q}({\bf n},{\bf k}) \bigg\}.
\end{equation}
Thus, from \eqref{MainOther} we can infer that the variables in the normal ordered form appear in the combinations $Y^{|{\bf n}|}X^{|{\bf m}|}, Y^{|{\bf n}|+(s-1)}Z_{p}X^{|{\bf m}|-1},\ldots,Y^{|{\bf n}|+(s-1)|{\bf m}|}Z_{p}^{|{\bf m}|}$, with coefficients given by \eqref{MainOtherCo}. Note that the summand for $k=0$ is given by $\mathscr{J}_{p,q,s}({\bf m},{\bf n},0) Y^{|{\bf n}|}X^{|{\bf m}|}$ where 
$$
\mathscr{J}_{p,q,s}({\bf m},{\bf n},0) =p^{{\mathcal P}_r({\bf m},{\bf 0})}  q^{{\mathcal Q}_{r|s}({\bf m},{\bf n},{\bf 0})}   {\mathcal A}_{r|s;p,q}({\bf n},{\bf 0}).
$$
From the definitions, one finds 
$$
{\mathcal P}_r({\bf m},{\bf 0})=0, \,\,\, {\mathcal Q}_{r|s}({\bf m},{\bf n},{\bf 0})=\sum_{j=1}^r m_j \sum_{l=1}^j n_l, \,\,\, {\mathcal A}_{r|s;p,q}({\bf n},{\bf 0})=1.
$$
Let us define for any word $\omega=X^{m_r}Y^{n_r}\cdots X^{m_1}Y^{n_1}$ its {\em index} ${\mathcal I}(\omega)$ by 
$$
{\mathcal I}(\omega) :=\sum_{j=1}^r m_j \sum_{l=1}^j n_l,
$$
see \cite[Section 7.1.1]{TMMS2016}. If $X$ and $Y$ were $q$-commuting, it would count the number of times one has to use the commutation relation in order to bring $\omega$ into normal ordered form. Thus, ${\mathcal Q}_{r|s}({\bf m},{\bf n},{\bf 0})={\mathcal I}(\omega)$, and, consequently, $\mathscr{J}_{p,q,s}({\bf m},{\bf n},0) =q^{{\mathcal I}(\omega)}$. Therefore, we can write in analogy to \eqref{xmynexpand} the expansion
\begin{equation}\label{ExpGen}
\omega = q^{{\mathcal I}(\omega)} Y^{|{\bf n}|}X^{|{\bf m}|}+{\mathcal O}(h).
\end{equation}

\begin{corollary}
For $s=1$ (corresponding to the $(p,q)$-deformed shift algebra, see Section~\ref{Sectpqshift}), we have
$$
{\mathcal Q}_{r|1}({\bf m},{\bf n},{\bf k})=\sum_{j=1}^r (m_j-k_j)\sum_{l=1}^{j}n_l, \,\, {\mathcal A}_{r|1;p,q}({\bf n},{\bf k})= \prod_{j=1}^r a_{1;p,q}\left(\sum_{l=1}^{j}n_l\right)=\prod_{j=1}^r \prod_{\ell =0}^{k_j-1}\left[\sum_{l=1}^{j}n_l \right]_{p, q}.$$
Hence, \eqref{Main} reduces to
\begin{equation}
\omega=\sum_{{\bf k}  \in ({\bf 0},{\bf m})_r} \bigg\{ h^{|{\bf k}|} p^{{\mathcal P}_r({\bf m},{\bf k})}  q^{\sum\limits_{j=1}^r (m_j-k_j)\sum\limits_{l=1}^{j}n_l}  {\bf m\choose \bf k}_{p}  \prod_{j=1}^r\left[\sum_{l=1}^{j}n_l \right]_{p, q}^{k_j} \bigg\} Y^{|{\bf n}|}Z_{p}^{|{\bf k}|}X^{|{\bf m}|-|{\bf k}|}.
\end{equation}
If we specialize furthermore to $p=1$, we obtain the following extension of \eqref{xmynShiftp2},
$$\omega=Y^{|{\bf n}|}\prod_{j=1}^{r}\Biggl(q^{\sum\limits_{l=1}^{j}n_l}X+h\left[\sum_{l=1}^{j}n_l\right]_q\Biggr)^{m_j}.$$
\end{corollary}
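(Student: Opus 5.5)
The plan is to specialize Theorem~\ref{thmword} directly, first to $s=1$ and then to $p=1$. No new commutation computations are needed; everything reduces to bookkeeping in the definitions of ${\mathcal Q}_{r|s}$, ${\mathcal P}_r$, ${\mathcal A}_{r|s;p,q}$ and the binomial coefficients. Throughout, abbreviate $N_j:=\sum_{l=1}^{j}n_l$.

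\emph{Step 1: the case $s=1$.} Set $s=1$ in the definitions, so that every term carrying the factor $(s-1)$ vanishes. In ${\mathcal Q}_{r|s}$ this leaves $\sum_{j}(m_j-k_j)N_j$. In $a_{s;p,q}(n,k)=\prod_{j=0}^{k-1}[n+(s-1)j]_{p,q}$ the factors no longer depend on $j$, so $a_{1;p,q}(n,k)=[n]_{p,q}^{k}$, and the argument of the $j$-th factor of ${\mathcal A}_{r|1;p,q}$ is just $N_j$. This gives ${\mathcal A}_{r|1;p,q}({\bf n},{\bf k})=\prod_{j=1}^r[N_j]_{p,q}^{k_j}$. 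The binomial coefficients become ${m_j\choose k_j}_{p,q^{0}}={m_j\choose k_j}_{p,1}={m_j\choose k_j}_{p}$, exactly as already used for \eqref{xmynShift}. The monomial $Y^{|{\bf n}|+(s-1)|{\bf k}|}$ becomes $Y^{|{\bf n}|}$. Substituting all of this into \eqref{Main} gives the first displayed formula.

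\emph{Step 2: the case $p=1$.} Here $Z_1=I$ and $p^{{\mathcal P}_r({\bf m},{\bf k})}=1$. The coefficients ${m_j\choose k_j}_{1}$ are the ordinary binomials, and $[N_j]_{1,q}=[N_j]_q$. The summand therefore factorizes completely over $j$:
$$
Y^{|{\bf n}|}\prod_{j=1}^r {m_j\choose k_j}\bigl(q^{N_j}\bigr)^{m_j-k_j}\bigl(h[N_j]_q\bigr)^{k_j}\;\cdot\;X^{\sum_j(m_j-k_j)}.
$$
Since the coefficients are scalars, $X^{\sum_j(m_j-k_j)}=\prod_j X^{m_j-k_j}$. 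The sum over ${\bf k}\in({\bf 0},{\bf m})_r$ is a product of independent sums over each $k_j$. Each of these is the ordinary binomial expansion of $(q^{N_j}X+h[N_j]_q)^{m_j}$, which is legitimate because $X$ commutes with scalars. Taking the product over $j$ yields the claimed product formula. For $r=1$ it reduces to \eqref{xmynShiftp2}, which serves as a consistency check.

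There is no real obstacle. The only points needing care are the convention $q^{s-1}=q^0=1$ in the deformed binomial, and the observation that in Step~2 the exponent of $p$ disappears. This matters because for $p\neq1$ the noncommuting $Z_p$ would prevent the summand from factorizing into the product over $j$.
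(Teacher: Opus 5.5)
Your proposal is correct and follows the paper's route. The corollary is obtained by substituting $s=1$ into Theorem~\ref{thmword}: all $(s-1)$-terms vanish, $a_{1;p,q}(n,k)=[n]_{p,q}^k$, and ${m\choose k}_{p,q^0}={m\choose k}_p$. Then $p=1$ is substituted, and your splitting of the sum over ${\bf k}$ into independent ordinary binomial expansions is exactly the step the paper leaves implicit.

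One refinement of your closing remark: the summand indeed stops factorizing for $p\neq 1$, but the ordered product $Y^{|{\bf n}|}\bigl(q^{N_r}X+h[N_r]_{p,q}Z_p\bigr)^{m_r}\cdots\bigl(q^{N_1}X+h[N_1]_{p,q}Z_p\bigr)^{m_1}$ still equals $\omega$ for general $p$. This follows by iterating $X^mY^n=Y^n(q^nX+h[n]_{p,q}Z_p)^m$. The factor $p^{{\mathcal P}_r({\bf m},{\bf k})}$ then arises exactly from moving the $Z_p$'s left past the $X$'s.
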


\begin{corollary}
Recall that letting $h=0$ in \eqref{pqcr1} -- \eqref{pqcr4} means considering $q$-commuting variables $X$ and $Y$, i.e., $XY=qYX$. For $h=0$, \eqref{Main} reduces to $\omega=q^{{\mathcal I}(\omega)}Y^{|{\bf n}|}X^{|{\bf m}|}$, where we used the index of a word introduced above. This is a well-known result for $q$-commuting variables (see \cite[Section 7.1.1]{TMMS2016}). Comparing with \eqref{ExpGen}, we see that this is the lowest order term in an expansion in $h$.
\end{corollary}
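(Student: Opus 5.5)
The statement to prove is the last corollary: for $h=0$, \eqref{Main} reduces to $\omega=q^{{\mathcal I}(\omega)}Y^{|{\bf n}|}X^{|{\bf m}|}$. I will specialize Theorem~\ref{thmword} directly.

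First, every summand in \eqref{Main} carries the factor $h^{|{\bf k}|}$. With the usual convention $h^0=1$, setting $h=0$ kills every ${\bf k}\in({\bf 0},{\bf m})_r$ with $|{\bf k}|\ge 1$. Since all $k_j\ge 0$, the only survivor is ${\bf k}={\bf 0}$.

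For ${\bf k}={\bf 0}$ I evaluate each factor, as was already done just before \eqref{ExpGen}:
\begin{itemize}
\item ${\mathcal P}_r({\bf m},{\bf 0})=0$, because every inner sum $\sum_{l\le j}k_l$ vanishes;
\item ${\mathcal Q}_{r|s}({\bf m},{\bf n},{\bf 0})=\sum_j m_j\sum_{l\le j}n_l={\mathcal I}(\omega)$;
\item ${\bf m\choose \bf 0}_{p,q^{s-1}}=1$;
\item ${\mathcal A}_{r|s;p,q}({\bf n},{\bf 0})=1$, being an empty product;
\item the monomial is $Y^{|{\bf n}|}Z_p^0X^{|{\bf m}|}=Y^{|{\bf n}|}X^{|{\bf m}|}$.
\end{itemize}
This gives $\omega=q^{{\mathcal I}(\omega)}Y^{|{\bf n}|}X^{|{\bf m}|}$, which matches the $h$-independent term in \eqref{ExpGen}.

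One subtlety needs attention. Definition~\ref{DefpqGenWeyl} assumes $h\neq0$, so I will note that Theorem~\ref{thmword} is really an identity derived purely from the relations, with $h$ a parameter. Every step of its proof, namely Lemma~\ref{lem1}, \eqref{xmyn} and the induction, only uses the relations and never divides by $h$. The formula therefore remains valid at $h=0$, where the relations give $XY=qYX$ and $Z_p$ decouples.

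As an independent check, I would also confirm the result directly. Moving each $X$ past each $Y$ to its right contributes one factor $q$. The letters in block $X^{m_j}$ cross exactly $\sum_{l\le j}n_l$ letters $Y$, which again gives ${\mathcal I}(\omega)$.

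There is no real obstacle here; the only point requiring care is justifying the specialization $h=0$ as described above.
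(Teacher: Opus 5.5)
Your proof is correct and takes the same route as the paper: setting $h=0$ in \eqref{Main} leaves only the ${\bf k}={\bf 0}$ summand, whose factors (${\mathcal P}_r=0$, ${\mathcal Q}_{r|s}={\mathcal I}(\omega)$, ${\mathcal A}_{r|s;p,q}=1$, trivial binomials) were already evaluated just before \eqref{ExpGen}, giving $q^{{\mathcal I}(\omega)}Y^{|{\bf n}|}X^{|{\bf m}|}$. The paper leaves implicit two things you supply: the remark that the derivation of \eqref{Main} never divides by $h$, so specializing to $h=0$ is legitimate, and your direct count of how many letters $Y$ each $X$ must pass. That count is a worthwhile self-contained check, since it does not rely on Theorem~\ref{thmword} at all.
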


\begin{corollary}\label{corpower} Let $X, Y$ and $Z_p$ be variables satisfying the commutation relations \eqref{pqdefgenWeyl} of the $(p, q)$-deformed generalized Weyl algebra $A_{s;h|p,q}$. Then the normal ordered form of the word $\omega=(XY)^r, r \in \mathbb{N}$, is given by
\begin{equation}\label{corpowereq}
(XY)^r=\sum_{k=0}^r h^{k}\bigg\{ \sum_{{\bf k}  \in \{0,1\}^r \atop |{\bf k}|=k}  p^{{\mathcal P}_r({\bf 1},{\bf k})}  q^{{\mathcal Q}_{r|s}({\bf 1},{\bf 1},{\bf k})}   {\mathcal A}_{r|s;p,q}({\bf 1},{\bf k}) \bigg\} Y^{r+(s-1)k}Z_{p}^{k}X^{r-k}.
\end{equation}
\end{corollary}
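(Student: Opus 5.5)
This is a direct specialization of Theorem~\ref{thmword}, so no new induction is needed. The word $(XY)^r$ is of the form \eqref{wordxy} with $m_j=n_j=1$ for all $j=1,\ldots,r$, that is, ${\bf m}={\bf n}={\bf 1}=(1,\ldots,1)\in\mathbb{N}_0^r$. I insert these values into \eqref{Main} and then regroup according to $k=|{\bf k}|$, exactly as in the passage from \eqref{Main} to \eqref{MainOther}.

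The steps, in order, are as follows.

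First, the summation range $({\bf 0},{\bf 1})_r$ consists of all ${\bf k}$ with $0\le k_j\le 1$. This is precisely $\{0,1\}^r$.

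Second, the binomial factor is trivial. Each ${1\choose k_j}_{p,q^{s-1}}$ equals $1$ for $k_j\in\{0,1\}$, since $[0]_{p,q^{s-1}}!=[1]_{p,q^{s-1}}!=1$. Hence ${{\bf 1}\choose {\bf k}}_{p,q^{s-1}}=1$ and this factor drops out.

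Third, the exponents of $Y$, $Z_p$ and $X$ become $|{\bf n}|+(s-1)|{\bf k}|=r+(s-1)k$, $|{\bf k}|=k$ and $|{\bf m}|-|{\bf k}|=r-k$. The power of $h$ is $h^{|{\bf k}|}=h^k$.

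Fourth, the remaining weights are $p^{{\mathcal P}_r({\bf 1},{\bf k})}$, $q^{{\mathcal Q}_{r|s}({\bf 1},{\bf 1},{\bf k})}$ and ${\mathcal A}_{r|s;p,q}({\bf 1},{\bf k})$. These are kept exactly as they stand.

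Finally, I split the sum over ${\bf k}\in\{0,1\}^r$ by the value $k=|{\bf k}|\in\{0,\ldots,r\}$. This is the same regrouping that produced \eqref{MainOther} and \eqref{MainOtherCo}, and it yields \eqref{corpowereq}.

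There is no real obstacle. The only point needing care is the vanishing of the binomial factor, which holds because every $m_j=1$ forces $k_j\in\{0,1\}$.
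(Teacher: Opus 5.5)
Your proposal is correct and follows the paper's own proof: specialize Theorem~\ref{thmword} to ${\bf m}={\bf n}={\bf 1}$, note that ${{\bf 1}\choose {\bf k}}_{p,q^{s-1}}=1$ for every ${\bf k}\in\{0,1\}^r$, and group the terms by $k=|{\bf k}|$. One wording fix: in your last sentence, the binomial factor does not vanish; it becomes trivial, i.e.\ equal to $1$.
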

\begin{proof}
Using the fact that, for all ${\bf k}  \in \{0,1\}^r$, 
\begin{equation}\label{pqBinomRed}
 {\bf 1\choose \bf k}_{p, q^{s-1}} =1,
\end{equation}
this follows immediately from \eqref{Main} since in the case at hand ${\bf m}={\bf n}=(1,1,\ldots,1)$ of length $r$.
\end{proof}

In Theorem~\ref{thmword}, the normal ordered form of a word $\omega = w_{{\bf m},{\bf n}}=X^{m_r}Y^{n_r}\cdots X^{m_1}Y^{n_1}$, $m_k,n_k \in \mathbb{N}_0$, is given. What about considering the letter $Z_p$ on the left-hand side? Since $Z_p$ almost commutes with $X$ and $Y$ we can pull all letters $Z_p$ to the right (or, alternatively, to the left), yielding some powers of $p$ and something of the form $\omega^{\ast} Z_p^n$, where $\omega^{\ast}$ is a word in $X$ and $Y$ only. This can be normal ordered and then the factor $Z_p^n$ be brought to its correct position. Let us consider this in more detail and let 
\begin{equation}\label{PQWeylWord}
\mathrm{w}_{{\bf m},{\bf n},{\bf u}}=Z_p^{u_{2r}}Y^{n_r}Z_p^{u_{2r-1}}X^{m_r}\cdots Z_p^{u_4}Y^{n_2}Z_p^{u_3}X^{m_2}\cdot Z_p^{u_2}Y^{n_1}Z_p^{u_1}X^{m_1}\cdot Z_p^{u_0}
\end{equation}
be an arbitrary word in $X,Y$ and $Z_p$. Except for the right-most factor $Z_p^{u_0}$ we can group the factors as $r$ ``blocks'' of the form $Z_p^{u_{2k}}Y^{n_k}Z_p^{u_{2k-1}}X^{m_k}$, where $k=1,\ldots,r$. Commuting $Z_p^{u_{2k-1}}$ to the right yields a factor $p^{u_{2k-1}(\sum_{j=1}^{k-1}n_j-\sum_{j=1}^{k}m_j)}$, whereas commuting $Z_p^{u_{2k}}$ to the right yields a factor $p^{u_{2k}(\sum_{j=1}^{k}n_j-\sum_{j=1}^{k}m_j)}$. Together, we get as exponent 
$$
u_{2k-1}\left(\sum_{j=1}^{\lfloor \frac{2k-1}{2} \rfloor} n_j-\sum_{j=1}^{\lfloor \frac{2k}{2} \rfloor}m_j\right)+u_{2k}\left(\sum_{j=1}^{\lfloor \frac{2k}{2} \rfloor}n_j-\sum_{j=1}^{\lfloor \frac{2k+1}{2} \rfloor}m_j)\right).
$$
Thus, pulling all letters $Z_p$ of $\mathrm{w}_{{\bf m},{\bf n},{\bf u}}$ to the right, we get a factor of $p^{\mathscr{L}({\bf m},{\bf n},{\bf u})}$, where
\begin{equation}\label{PQExponent}
\mathscr{L}({\bf m},{\bf n},{\bf u}):=\sum_{\ell =1}^{2r} u_{\ell }\left(\sum_{j=1}^{\lfloor \frac{\ell }{2} \rfloor} n_j-\sum_{j=1}^{\lfloor \frac{\ell +1}{2} \rfloor}m_j\right).
\end{equation}
Denoting by $\mathrm{w}_{{\bf m},{\bf n},{\bf u}}^{\ast}$ the {\it reduced word of $\mathrm{w}_{{\bf m},{\bf n},{\bf u}}$} where all letters $Z_p$ are deleted from $\mathrm{w}_{{\bf m},{\bf n},{\bf u}}$, we can write (using  $|{\bf u}|=u_{2r}+u_{2r-1}+\cdots + u_1+u_0$)
\begin{equation}
\mathrm{w}_{{\bf m},{\bf n},{\bf u}}=p^{\mathscr{L}({\bf m},{\bf n},{\bf u})}\mathrm{w}_{{\bf m},{\bf n},{\bf u}}^{\ast} Z_p^{|{\bf u}|}.
\end{equation}
Now, we can use \eqref{Main} for $\mathrm{w}_{{\bf m},{\bf n},{\bf u}}^{\ast} $ to obtain the following result.
\begin{proposition}Let $X, Y$ and $Z_p$ be variables satisfying the commutation relations \eqref{pqdefgenWeyl} of the $(p, q)$-deformed generalized Weyl algebra $A_{s;h|p,q}$. Then the normal ordered form of an arbitrary word $\mathrm{w}_{{\bf m},{\bf n},{\bf u}}=Z_p^{u_{2r}}Y^{n_r}Z_p^{u_{2r-1}}X^{m_r}\cdots Z_p^{u_2}Y^{n_1}Z_p^{u_1}X^{m_1}Z_p^{u_0}$ is given by
\begin{equation}
\mathrm{w}_{{\bf m},{\bf n},{\bf u}}=\sum_{{\bf k}  \in ({\bf 0},{\bf m})_r} {\mathfrak N}_{p,q;h,s}({\bf m},{\bf n},{\bf u}| {\bf k}) \,  Y^{|{\bf n}|+(s-1)|{\bf k}|}Z_{p}^{|{\bf u}|+|{\bf k}|}X^{|{\bf m}|-|{\bf k}|},
\end{equation}
where we introduced the abbreviation 
$$
{\mathfrak N}_{p,q;h,s}({\bf m},{\bf n},{\bf u}|{\bf k}) :=\bigg\{ h^{|{\bf k}|} p^{{\mathcal P}_r^{\ast}({\bf m},{\bf k})+|{\bf u}|(|{\bf m}|-k)+\mathscr{L}({\bf m},{\bf n},{\bf u})}  q^{{\mathcal Q}_{r|s}^{\ast}({\bf m},{\bf n},{\bf k})}  {\bf m\choose \bf k}_{p, q^{s-1}}  {\mathcal A}_{r|s;p,q}^{\ast}({\bf n},{\bf k}) \bigg\}
$$
with $\mathscr{L}({\bf m},{\bf n},{\bf u})$ defined in \eqref{PQExponent} and the functions ${\mathcal P}_r^{\ast}, {\mathcal Q}_{r|s}^{\ast}$ and ${\mathcal A}_{r|s;p,q}^{\ast}$ have been labeled with the asterisk ``$\ast$'' to indicate that they are evaluated for the reduced word $\mathrm{w}_{{\bf m},{\bf n},{\bf u}}^{\ast}$.  
\end{proposition}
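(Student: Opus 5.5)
The plan is to reduce the statement to Theorem~\ref{thmword} by moving all letters $Z_p$ to the far right, normal ordering the remaining word in $X$ and $Y$, and then moving the accumulated power of $Z_p$ back to its normal ordered position. The only input needed besides Theorem~\ref{thmword} is the pair of monomial exchange rules obtained by iterating \eqref{pqcr2} and \eqref{pqcr3}:
\[
Z_p^{u}Y^{n}=p^{un}Y^{n}Z_p^{u}, \qquad Z_p^{u}X^{m}=p^{-um}X^{m}Z_p^{u}.
\]
The second rule follows from $XZ_p=pZ_pX$, which gives $Z_pX=p^{-1}XZ_p$.

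\emph{Step 1 (extracting $p^{\mathscr{L}}$).} I will move each block $Z_p^{u_\ell}$ to the right end of $\mathrm{w}_{{\bf m},{\bf n},{\bf u}}$.
\begin{itemize}
\item The factor $Z_p^{u_\ell}$ must pass every letter $Y$ and $X$ standing to its right. Reading \eqref{PQWeylWord}, these are $\sum_{j\le\lfloor \ell/2\rfloor}n_j$ letters $Y$ and $\sum_{j\le\lfloor(\ell+1)/2\rfloor}m_j$ letters $X$.
\item By the exchange rules, each $Y$ contributes a factor $p^{u_\ell}$ and each $X$ a factor $p^{-u_\ell}$.
\item The letters $Z_p$ commute among themselves, so the order in which the blocks are moved is irrelevant.
\end{itemize}
Summing the exponents over $\ell$ gives exactly $\mathscr{L}({\bf m},{\bf n},{\bf u})$ from \eqref{PQExponent}, hence
\[
\mathrm{w}_{{\bf m},{\bf n},{\bf u}}=p^{\mathscr{L}}\,\mathrm{w}^\ast Z_p^{|{\bf u}|}.
\]
The only real care needed here is the floor bookkeeping, i.e.\ checking, separately for odd and even $\ell$, which $X$- and $Y$-blocks lie to the right of $Z_p^{u_\ell}$.

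\emph{Step 2 (normal ordering $\mathrm{w}^\ast$).} The reduced word $\mathrm{w}^\ast=Y^{n_r}X^{m_r}\cdots Y^{n_1}X^{m_1}$ is a word in $X$ and $Y$ only. Allowing zero exponents at the two ends, it can be rewritten in the form \eqref{wordxy}. I will apply \eqref{Main} to it, with ${\mathcal P}^\ast_r$, ${\mathcal Q}^\ast_{r|s}$, ${\mathcal A}^\ast_{r|s;p,q}$ denoting the functions evaluated on the correspondingly relabeled exponent vectors. This produces terms of the form
\[
h^{|{\bf k}|}p^{{\mathcal P}^\ast}q^{{\mathcal Q}^\ast}{\bf m\choose \bf k}_{p,q^{s-1}}{\mathcal A}^\ast\,Y^{|{\bf n}|+(s-1)|{\bf k}|}Z_p^{|{\bf k}|}X^{|{\bf m}|-|{\bf k}|}.
\]
The relabeling only shifts indices. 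It does not change $|{\bf n}|$, $|{\bf m}|$ or the range of ${\bf k}$, since padding by zero exponents adds only forced indices $k_j=0$.

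\emph{Step 3 (placing $Z_p^{|{\bf u}|}$).} Each term from Step~2 is still multiplied on the right by $Z_p^{|{\bf u}|}$. The identity $X^aZ_p^{|{\bf u}|}=p^{|{\bf u}|a}Z_p^{|{\bf u}|}X^a$ with $a=|{\bf m}|-|{\bf k}|$ moves it past the $X$'s. It then merges with $Z_p^{|{\bf k}|}$, producing $Z_p^{|{\bf u}|+|{\bf k}|}$ and the extra factor $p^{|{\bf u}|(|{\bf m}|-|{\bf k}|)}$. Collecting the three powers of $p$ gives ${\mathfrak N}_{p,q;h,s}({\bf m},{\bf n},{\bf u}|{\bf k})$, where the $k$ in the statement is read as $|{\bf k}|$.

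I expect the main obstacle to be Step~1: the sign conventions (a $Y$ gives $p^{+1}$, an $X$ gives $p^{-1}$) and the index ranges must match \eqref{PQExponent} precisely. Steps~2 and~3 are direct substitutions.
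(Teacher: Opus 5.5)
Your proposal is correct and follows the paper's own argument. The paper likewise pulls every block $Z_p^{u_\ell}$ to the right to obtain $\mathrm{w}_{{\bf m},{\bf n},{\bf u}}=p^{\mathscr{L}({\bf m},{\bf n},{\bf u})}\mathrm{w}^{\ast}_{{\bf m},{\bf n},{\bf u}}Z_p^{|{\bf u}|}$, applies \eqref{Main} to the reduced word, and moves $Z_p^{|{\bf u}|}$ past $X^{|{\bf m}|-|{\bf k}|}$ to pick up $p^{|{\bf u}|(|{\bf m}|-|{\bf k}|)}$. Your zero-padding of $\mathrm{w}^\ast$ to fit the shape of \eqref{wordxy}, and your reading of $k$ as $|{\bf k}|$, make explicit what the paper's asterisk convention leaves implicit.
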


\subsection{The connection to generalized Stirling numbers}\label{SectGenStir}
Let us recall that in this context {\it generalized Stirling numbers} $\mathfrak{S}_{s;h}(n,k|p,q)$ were introduced as normal ordering coefficients of $(YX)^n$ in $A_{s;h|p,q}$ as follows (see \cite[Definition 3.3]{LO2021}),
\begin{equation}\label{StirGen}
(YX)^n=\sum_{k=0}^n \mathfrak{S}_{s;h}(n,k|p,q) \, Y^{s(n-k)+k}Z_p^{n-k} X^{k}.
\end{equation}
They satisfy the recurrence relation \cite[Theorem 3.5]{LO2021}
\begin{equation}\label{Stirlingrecu}
\mathfrak{S}_{s;h}(n+1,k|p,q)=p^{n-k+1}q^{s(n-k+1)+k-1}\mathfrak{S}_{s;h}(n,k-1|p,q)+h[s(n-k)+k]_{p, q}\mathfrak{S}_{s;h}(n,k|p,q).
\end{equation}
\begin{example}\label{pqStirling} Let $s=0$. Then \eqref{StirGen} reduces to 
\begin{equation}\label{StirGenS0}
(YX)^n=\sum_{k=0}^n \mathfrak{S}_{0;h}(n,k|p,q) \, Y^{k}Z_p^{n-k} X^{k}.
\end{equation}
Comparing this with \cite[Proposition 3]{LO2020}, we can identify $\mathfrak{S}_{0;1}(n,k|p,q)=S_{p,q}(n,k)$ with the $(p,q)$-Stirling numbers of the second kind discussed in \cite{LO2020}. One has $S_{p,q}(0,0)=1$ and 
\begin{equation}\label{pqStirlingRecu}
S_{p,q}(n,k)=p^{n-k}q^{k-1}S_{p,q}(n-1,k-1)+[k]_{p,q}S_{p,q}(n-1,k),
\end{equation}
following also from \eqref{Stirlingrecu} for $s=0$ and $h=1$. 
\end{example}
Since $(YX)^n=Y(XY)^{n-1}X$, we can use Corollary~\ref{corpower} for $r=n-1$ to obtain
$$
(YX)^n=\sum_{k=0}^{n-1} h^{k}\bigg\{ \sum_{{\bf k}  \in \{0,1\}^{n-1}\atop |{\bf k}|=k}  p^{{\mathcal P}_{n-1}({\bf 1},{\bf k})}  q^{{\mathcal Q}_{n-1|s}({\bf 1},{\bf 1},{\bf k})}   {\mathcal A}_{n-1|s;p,q}({\bf 1},{\bf k}) \bigg\} Y^{n+(s-1)k}Z_{p}^{k}X^{n-k}.
$$
Letting $\kappa = n-k$, this gives
$$
(YX)^n=\sum_{\kappa=1}^{n} h^{n-\kappa}\bigg\{ \sum_{{\bf k}  \in \{0,1\}^{n-1}\atop |{\bf k}|=n-\kappa}  p^{{\mathcal P}_{n-1}({\bf 1},{\bf k})}  q^{{\mathcal Q}_{n-1|s}({\bf 1},{\bf 1},{\bf k})}   {\mathcal A}_{n-1|s;p,q}({\bf 1},{\bf k}) \bigg\} Y^{s(n-\kappa)+\kappa}Z_{p}^{n-\kappa}X^{\kappa}.
$$
Comparing this with \eqref{StirGen}, one obtains the following result.
\begin{proposition} The generalized Stirling numbers $\mathfrak{S}_{s;h}(n,k|p,q)$ can be expressed as follows,
\begin{equation}\label{StirExpr}
\mathfrak{S}_{s;h}(n,k|p,q) =h^{n-k}\sum_{{\bf k}  \in\{0,1\}^{n-1}\atop |{\bf k}|=n-k}  p^{{\mathcal P}_{n-1}({\bf 1},{\bf k})}  q^{{\mathcal Q}_{n-1|s}({\bf 1},{\bf 1},{\bf k})}   {\mathcal A}_{n-1|s;p,q}({\bf 1},{\bf k}).
\end{equation}
\end{proposition}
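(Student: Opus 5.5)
The plan is to read off the coefficients by comparing two normal-ordered expansions of $(YX)^n$; the derivation just before the statement already sketches this. First I handle $n=1$ separately. There $(YX)^1=YX$ is already normal ordered, so $\mathfrak{S}_{s;h}(1,1|p,q)=1$ and $\mathfrak{S}_{s;h}(1,0|p,q)=0$. The right-hand side of \eqref{StirExpr} ranges over $\{0,1\}^0=\{()\}$ with the empty product convention, so it gives $h^0\cdot 1=1$ for $k=1$ and an empty sum for $k=0$. The two agree. For $n\ge 2$, I write $(YX)^n=Y(XY)^{n-1}X$ and substitute Corollary~\ref{corpower} with $r=n-1$ for $(XY)^{n-1}$.

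Left multiplication by $Y$ causes no reordering, since $Y\cdot Y^{a}=Y^{a+1}$. Right multiplication by $X$ only extends the trailing power $X^{r-k}$ to $X^{n-k}$. Hence no further commutations, and no extra powers of $p$ or $q$, are produced. This yields exactly the displayed expansion with monomials $Y^{n+(s-1)k}Z_p^kX^{n-k}$. The reindexing $\kappa=n-k$ turns the $Y$-exponent into $s(n-\kappa)+\kappa$, the $Z_p$-exponent into $n-\kappa$, and the $X$-exponent into $\kappa$. Here $\kappa$ runs from $1$ to $n$, because $k\le n-1$.

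The key step, and the main obstacle, is justifying that the coefficients may be identified with those in \eqref{StirGen}. This requires the normal-ordered monomials $Y^aZ_p^bX^c$ to be linearly independent in $A_{s;h|p,q}$, so that the normal ordered form is unique. I will take this as the convention implicit in the paper's use of "normal ordered form" and in \eqref{StirGen} itself; \eqref{StirGen} is understood as the definition of the coefficients via that unique normal form. If an independent justification is wanted, the standard route is a PBW/Diamond-lemma argument for the rewriting rules $XY\to qYX+hY^sZ_p$, $Z_pY\to pYZ_p$, $XZ_p\to pZ_pX$. One checks that the single overlap ambiguity $XZ_pY$ resolves; this is precisely the compatibility $p\cdot p=p\cdot p$ already used implicitly in Lemma~\ref{lem1}.

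Under that uniqueness, the coefficient of $Y^{s(n-k)+k}Z_p^{n-k}X^{k}$ is read off for each $k$. For $1\le k\le n$ this gives \eqref{StirExpr}. For $k=0$ the right-hand side of \eqref{StirExpr} is an empty sum, since $|{\bf k}|=n$ is impossible in $\{0,1\}^{n-1}$. So it equals $0$, consistent with the absence of an $X^0$ term in the expansion, which gives $\mathfrak{S}_{s;h}(n,0|p,q)=0$ for $n\ge1$.
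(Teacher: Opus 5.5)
Your computation is exactly the paper's argument: write $(YX)^n=Y(XY)^{n-1}X$, insert Corollary~\ref{corpower} with $r=n-1$, shift $\kappa=n-k$, and compare with \eqref{StirGen}. Treating $n=1$ and $k=0$ separately is a sensible addition, since Corollary~\ref{corpower} is only stated for $r\in\mathbb{N}$.

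The problem is the justification you sketch for the step you yourself call the crux. The overlap $XZ_pY$ does \emph{not} resolve, and your check $p\cdot p=p\cdot p$ only matches the $YZ_pX$ terms. Reducing $XZ_p$ first gives $pZ_pXY=p^2qYZ_pX+p^{s+1}hY^sZ_p^2$. Reducing $Z_pY$ first gives $pXYZ_p=p^2qYZ_pX+phY^sZ_p^2$. So the defining relations of $A_{s;h|p,q}$ force $ph(p^s-1)Y^sZ_p^2=0$. For $s=0$ this is harmless. For $s\geq 1$, $p\neq 0$, $p^s\neq 1$, however, it gives $Y^sZ_p^2=0$, and the monomials $Y^aZ_p^bX^c$ are linearly dependent. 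For example, the term $h^2[s+1]_{p,q}Y^{2s+1}Z_p^2X$ of $(YX)^3$ is zero in the algebra. Hence the normal ordered form is not unique, and \eqref{StirGen} by itself does not pin down $\mathfrak{S}_{s;h}(n,k|p,q)$. Reading off coefficients therefore needs a justification other than a PBW/Diamond-lemma argument; the paper's one-line comparison tacitly assumes the same uniqueness.

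The identification can be made rigorous without uniqueness. One option: Corollary~\ref{corpower} and the recurrence \eqref{Stirlingrecu} of \cite{LO2021} come from the same formal procedure, namely left multiplication of a normal-ordered word by $X$ via Lemma~\ref{lem1}, followed by $XZ_p^b=p^bZ_p^bX$. This procedure is compatible with appending a factor $X$ on the right.

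A cleaner option is to check directly that the right-hand side of \eqref{StirExpr} satisfies \eqref{Stirlingrecu}. For $\mathfrak{S}_{s;h}(n+1,k|p,q)$, split off the coordinate $k_n$ of ${\bf k}\in\{0,1\}^n$ (the leftmost factor $XY$), exactly as in the inductive step of Theorem~\ref{thmword}.
\begin{itemize}
\item If $k_n=0$, then $\sum_{l<n}k_l=n+1-k$. This choice contributes $p^{n+1-k}q^{n+(s-1)(n+1-k)}=p^{n-k+1}q^{s(n-k+1)+k-1}$ times the right-hand side of \eqref{StirExpr} at $(n,k-1)$.
\item If $k_n=1$, then $\sum_{l<n}k_l=n-k$. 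This choice contributes $h[n+(s-1)(n-k)]_{p,q}=h[s(n-k)+k]_{p,q}$ times the right-hand side at $(n,k)$.
\end{itemize}
The powers of $h$ match in both cases. With your base case $n=1$, induction on $n$ then proves \eqref{StirExpr} for the numbers characterized by \eqref{Stirlingrecu}.
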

Note that from \eqref{StirExpr} it is obvious that $\mathfrak{S}_{s;h}(n,k|p,q)=h^{n-k}\mathfrak{S}_{s;1}(n,k|p,q)$ (\cite[Corollary 3.6]{LO2021}). If we define the corresponding $(p,q)$-generalized Bell polynomials by 
\begin{equation}\label{PQBellpol}
\mathfrak{B}^{(s;h)}_{n|p,q}(x):=\sum_{k=0}^n \mathfrak{S}_{s;h}(n,k|p,q)x^k,
\end{equation}
this translates into
\begin{equation}\label{Bellscale}
\mathfrak{B}^{(s;h)}_{n|p,q}(x)=h^n \, \mathfrak{B}^{(s;1)}_{n|p,q}(x/h).
\end{equation}
The generalized Stirling numbers $\mathfrak{S}_{s;h}(n,k|p,q)$ are defined as normal ordering coefficients of the particular word $(YX)^n$, see \eqref{StirGen}. Following \cite{CCG2017}, we consider the normal ordering coefficients of arbitrary words. 
\begin{definition}\label{rooknum}
Let $X, Y$ and $Z_p$ be variables satisfying the commutation relations \eqref{pqdefgenWeyl} of the $(p, q)$-deformed generalized Weyl algebra $A_{s;h|p,q}$. The $(p, q)$-deformed generalized Stirling numbers $S_{s, h; p, q}^{{\bf m}, {\bf n}}[k]$ are defined as normal ordering coefficients of the string $\mathrm{w}_{{\bf m}, {\bf n}}=Y^{n_r}X^{m_r}\cdots Y^{n_1}X^{m_1}$, i.e., by
\begin{equation}\label{pqrook}
\mathrm{w}_{{\bf m}, {\bf n}}=\sum_{k=m_1}^{|{\bf m}|}S_{s, h; p, q}^{{\bf m}, {\bf n}}[k]Y^{|{\bf n}|-(|{\bf m}|-k)(1-s)}Z_{p}^{|{\bf m}|-k}X^{k}.
\end{equation}
\end{definition}

For the special case ${\bf m}={\bf 1}_r=(
\raisebox{0pt}[\height][0pt]{$\underbrace{1, 1\ldots1}_{r\ \text{times}})$}$ and ${\bf n}={\bf 1}_r=(
\raisebox{0pt}[\height][0pt]{$\underbrace{1, 1\ldots1}_{r\ \text{times}})$}$ we have
\vspace{.5cm}
\begin{equation}
(YX)^r=\sum_{k=1}^{r}S_{s, h; p, q}^{{\bf 1}_r,{\bf 1}_r}[k] Y^{s(r-k)+k}Z_{p}^{r-k}X^{k},
\end{equation}
which by comparison with \eqref{StirGen} shows that $S_{s, h; p, q}^{{\bf 1}_r,{\bf 1}_r}[k]=\mathfrak{S}_{s;h}(r,k|p,q)$, for $r \geq 1$. The expression of the following proposition for the numbers $S_{s, h; p, q}^{{\bf m}, {\bf n}}[k]$ provides an extension of \cite[Equation (7)]{CCG2017} (where $p=1$) to arbitrary $p$.
\begin{proposition}\label{pqrookfor}
The numbers $S_{s, h; p, q}^{{\bf m}, {\bf n}}[k]$ are given by
\begin{equation}\label{rookform2}
S_{s, h; p, q}^{{\bf m}, {\bf n}}[k]=h^{|{\bf m}|-k}\sum_{k_1+k_2+\cdots +k_{r-1}=|{\bf m}|-k}\prod_{i=1}^{r-1}\Psi_{s; p, q}\left[k_i, \sum_{j=1}^{i}n_j+(s-1)\sum_{j=1}^{i-1}k_j, \sum_{j=1}^{i-1}k_j, m_{i+1}\right],
\end{equation}
where
$$\Psi_{s; p, q}[j, k, l, m]=p^{(l-j)m}q^{k(l-j)} {l\choose j}_{p, q^{s-1}}\prod_{\ell=0}^{j-1}[k+\ell(s-1)]_{p, q}.$$
\end{proposition}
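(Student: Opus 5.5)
The plan is to reduce the word $\mathrm{w}_{{\bf m},{\bf n}}=Y^{n_r}X^{m_r}\cdots Y^{n_1}X^{m_1}$ to the form already treated in Theorem~\ref{thmword}. Peel off the outermost letters:
$$
\mathrm{w}_{{\bf m},{\bf n}}=Y^{n_r}\,\bigl(X^{m_r}Y^{n_{r-1}}X^{m_{r-1}}\cdots X^{m_2}Y^{n_1}\bigr)\,X^{m_1}.
$$
The middle factor is a word of the shape \eqref{wordxy} with $r-1$ blocks. Relabel it with $\tilde m_i=m_{i+1}$ and $\tilde n_i=n_i$ for $i=1,\ldots,r-1$.

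First, I apply \eqref{Main} to this middle word. This gives a sum over $\tilde{\bf k}=(k_1,\ldots,k_{r-1})$ with $0\le k_i\le m_{i+1}$ of $h^{|\tilde{\bf k}|}p^{\mathcal P_{r-1}}q^{\mathcal Q_{r-1|s}}\binom{\tilde{\bf m}}{\tilde{\bf k}}_{p,q^{s-1}}\mathcal A_{r-1|s;p,q}$ times $Y^{|\tilde{\bf n}|+(s-1)|\tilde{\bf k}|}Z_p^{|\tilde{\bf k}|}X^{|\tilde{\bf m}|-|\tilde{\bf k}|}$. Multiplying on the left by $Y^{n_r}$ and on the right by $X^{m_1}$ keeps the expression normal ordered. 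The result is $Y^{|{\bf n}|+(s-1)|\tilde{\bf k}|}Z_p^{|\tilde{\bf k}|}X^{|{\bf m}|-|\tilde{\bf k}|}$.

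Next, I set $k=|{\bf m}|-|\tilde{\bf k}|$. The exponent of $Y$ becomes $|{\bf n}|-(|{\bf m}|-k)(1-s)$, which matches \eqref{pqrook}. Since $|\tilde{\bf k}|\le m_2+\cdots+m_r$, we get $k\ge m_1$, which explains the summation range $k=m_1,\ldots,|{\bf m}|$. Uniqueness of normal ordered forms (the elements $Y^aZ_p^bX^c$ being a basis, as used implicitly throughout the paper) then identifies $S^{{\bf m},{\bf n}}_{s,h;p,q}[k]$ with $h^{|{\bf m}|-k}$ times the sum over $|\tilde{\bf k}|=|{\bf m}|-k$ of the bracketed coefficient.

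The remaining task, which I expect to be the main bookkeeping obstacle, is to factor that coefficient as $\prod_{i=1}^{r-1}\Psi_{s;p,q}[\cdot]$. I will unfold the definitions of $\mathcal P_{r-1}$, $\mathcal Q_{r-1|s}$ and $\mathcal A_{r-1|s;p,q}$ block by block. For block $i$, the contributions are:
\begin{itemize}
\item the binomial $\binom{m_{i+1}}{k_i}_{p,q^{s-1}}$;
\item the product $\prod_{\ell=0}^{k_i-1}\bigl[\sum_{j\le i}n_j+(s-1)\sum_{j<i}k_j+(s-1)\ell\bigr]_{p,q}$, which is exactly the product in $\Psi$ with its second argument $K_i=\sum_{j\le i}n_j+(s-1)\sum_{j<i}k_j$;
\item the $q$-exponent $(m_{i+1}-k_i)K_i$;
\item the $p$-exponent $(m_{i+1}-k_i)\sum_{j<i}k_j$.
\end{itemize}
These match $\Psi$ provided its arguments are read as (number of contractions $k_i$, current $Y$-degree $K_i$, accumulated $Z_p$-count $\sum_{j<i}k_j$, block size $m_{i+1}$). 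The exponents $p^{(l-j)m}q^{k(l-j)}$ and the binomial $\binom{l}{j}$ as printed suggest the roles of $l$ and $m$ are swapped relative to this reading. So I will carefully check the index convention against the $r=2$ case, i.e.\ against \eqref{xmyn} with a trailing $X^{m_1}$, and read the printed formula accordingly. Once the factorwise identification is confirmed, the product over $i$ reassembles the full coefficient and \eqref{rookform2} follows.
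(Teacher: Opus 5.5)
This is the paper's own argument: apply Theorem~\ref{thmword} to the inner word $X^{m_r}Y^{n_{r-1}}\cdots X^{m_2}Y^{n_1}$, multiply by $Y^{n_r}$ on the left and $X^{m_1}$ on the right, and compare coefficients; your blockwise matching with $\mathcal{P}_{r-1}$, $\mathcal{Q}_{r-1|s}$, $\mathcal{A}_{r-1|s;p,q}$ and the binomials is correct, and so is your $Y$-exponent $|{\bf n}|+(s-1)|\tilde{\bf k}|$ (the paper's \eqref{rookform1} prints it with a minus sign). Your suspicion about $\Psi_{s;p,q}$ is also right: read literally, the third argument $l=\sum_{j<i}k_j$ gives the binomial $\binom{0}{k_1}_{p,q^{s-1}}$ at $i=1$, so for $r=2$ the formula would yield $S^{{\bf m},{\bf n}}_{s,h;p,q}[m_1+m_2]=1$ and all lower coefficients zero, contradicting \eqref{xmyn}; the correct reading is $\Psi_{s;p,q}\bigl[k_i,\sum_{j=1}^{i}n_j+(s-1)\sum_{j=1}^{i-1}k_j,\,m_{i+1},\,\sum_{j=1}^{i-1}k_j\bigr]$, which is precisely what your identification produces.
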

\begin{proof}
By virtue of Theorem~\ref{thmword}, the string $\mathrm{w}_{{\bf m}, {\bf n}}$ can be written as
\begin{align}\label{rookform1}
\mathrm{w}_{{\bf m}, {\bf n}}=\sum_{k_1=0}^{m_2}&\cdots \sum_{k_{r-1}}^{m_r}h^{k_1+\cdots +k_{r-1}}\prod_{i=1}^{r-1}\Psi_{s; p, q}\left[k_i, \sum_{j=1}^{i}n_j+(s-1)\sum_{j=1}^{i-1}k_j, \sum_{j=1}^{i-1}k_j, m_{i+1}\right]\\
&Y^{|{\bf n}|-(s-1)(k_1+\cdots+k_{r-1})}Z_{p}^{k_1+\cdots+k_{r-1}}X^{|{\bf m}|-(k_1+\cdots+k_{r-1})}.\nonumber
\end{align}
By comparing the coefficient of $X^k$ in \eqref{pqrook} and \eqref{rookform1}, we obtain \eqref{rookform2}.
\end{proof}
\begin{corollary}
For the special case ${\bf m}={\bf 1}_r=(
\raisebox{0pt}[\height][0pt]{$\underbrace{1, 1\ldots1}_{r\ \text{times}})$}$ and ${\bf n}={\bf 1}_r=(
\raisebox{0pt}[\height][0pt]{$\underbrace{1, 1\ldots1}_{r\ \text{times}})$}$ we have $S_{s, h; p, q}^{{\bf 1}_r,{\bf 1}_r}[k]=\mathfrak{S}_{s;h}(r,k|p,q)$, hence
$$
\mathfrak{S}_{s;h}(r,k|p,q)=h^{r-k}\sum_{k_1+\cdots +k_{r-1}=r-k}\quad\prod_{i=1}^{r-1}\Psi_{s; p, q}[k_i, i+(s-1)\sum_{j=1}^{i-1}k_j,\sum_{j=1}^{i-1}k_j,1].
$$
\end{corollary}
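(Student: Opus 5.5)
The corollary is the specialization of Proposition~\ref{pqrookfor} to ${\bf m}={\bf n}={\bf 1}_r$, combined with the identification $S_{s,h;p,q}^{{\bf 1}_r,{\bf 1}_r}[k]=\mathfrak{S}_{s;h}(r,k|p,q)$. I would therefore establish two ingredients and then substitute.

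\emph{Step 1: identify the coefficients.} Setting ${\bf m}={\bf n}={\bf 1}_r$ makes the word $\mathrm{w}_{{\bf m},{\bf n}}=Y^{n_r}X^{m_r}\cdots Y^{n_1}X^{m_1}$ equal to $(YX)^r$. The monomials in \eqref{pqrook} then become
$$
Y^{r-(r-k)(1-s)}Z_p^{r-k}X^k=Y^{s(r-k)+k}Z_p^{r-k}X^k .
$$
These are exactly the monomials appearing in \eqref{StirGen}. The monomials $Y^aZ_p^bX^c$ are linearly independent in $A_{s;h|p,q}$, since they form the normal-ordered basis. Comparing coefficients therefore gives $S_{s,h;p,q}^{{\bf 1}_r,{\bf 1}_r}[k]=\mathfrak{S}_{s;h}(r,k|p,q)$ for $1\le k\le r$, as already noted before Proposition~\ref{pqrookfor}. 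Here the lower limit is $k=m_1=1$, and the $k=0$ coefficient of \eqref{StirGen} vanishes for $r\ge1$, which we also see from \eqref{StirExpr}. A direct check for this vanishing is that $(YX)^r$ ends in $X$, so every normal-ordered term has at least one $X$.

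\emph{Step 2: substitute.} In \eqref{rookform2}, put $n_j=1$ and $m_{i+1}=1$. Then $\sum_{j=1}^i n_j=i$ and $|{\bf m}|-k=r-k$. The $i$-th factor becomes
$$
\Psi_{s;p,q}\Bigl[k_i,\ i+(s-1)\sum_{j=1}^{i-1}k_j,\ \sum_{j=1}^{i-1}k_j,\ 1\Bigr],
$$
and the prefactor becomes $h^{r-k}$. This is precisely the stated formula.

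\emph{Main obstacle.} The only delicate point is that the summation in \eqref{rookform1} implicitly restricts $0\le k_i\le m_{i+1}=1$, that is, ${\bf k}\in\{0,1\}^{r-1}$. In the corollary this restriction is instead carried by the $(p,q)$-binomial coefficient inside $\Psi$, which vanishes when $j$ exceeds its upper entry. So I would check that each summand with some $k_i\ge2$ contains a vanishing $\binom{\cdot}{k_i}_{p,q^{s-1}}$ factor and hence drops out. That reconciles the unrestricted composition sum with Proposition~\ref{pqrookfor}, and after this check the corollary follows immediately.
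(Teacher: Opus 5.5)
Your route is the paper's own: the corollary is \eqref{rookform2} specialized to ${\bf m}={\bf n}={\bf 1}_r$, combined with the identification $S_{s,h;p,q}^{{\bf 1}_r,{\bf 1}_r}[k]=\mathfrak{S}_{s;h}(r,k|p,q)$ made before Proposition~\ref{pqrookfor}. Your Step~1 and the formal substitution in Step~2 are fine. The gap is in the step you single out as the only delicate one.

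In $\Psi_{s;p,q}[j,k,l,m]$ the binomial coefficient is $\binom{l}{j}_{p,q^{s-1}}$. Its upper entry is therefore the \emph{third} argument, which in the corollary is $\sum_{j=1}^{i-1}k_j$, not $m_{i+1}=1$. If you carry out the check you propose, the $i=1$ factor contains $\binom{0}{k_1}_{p,q^{s-1}}$, which kills every term with $k_1\ge 1$. Then $\binom{0}{k_2}_{p,q^{s-1}}$ kills $k_2\ge 1$, and so on. So not only the terms with some $k_i\ge 2$ disappear, but every term except ${\bf k}={\bf 0}$, whose factors are $\Psi_{s;p,q}[0,i,0,1]=1$. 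The displayed right-hand side is then $1$ for $k=r$ and $0$ otherwise. This contradicts $(YX)^2=Y(qYX+hY^sZ_p)X=qY^2X^2+hY^{s+1}Z_pX$, which gives $\mathfrak{S}_{s;h}(2,2|p,q)=q$ and $\mathfrak{S}_{s;h}(2,1|p,q)=h$. So the substitution only transports a slip already present in \eqref{rookform2}. Your claim that the $\{0,1\}$-restriction ``is carried by the binomial inside $\Psi$'' is false for $\Psi$ as defined.

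What your argument actually proves is the version with the third and fourth arguments of $\Psi$ interchanged. You should derive it from Theorem~\ref{thmword} (equivalently \eqref{StirExpr}) rather than from \eqref{rookform2}. Write $(YX)^r=Y(XY)^{r-1}X$ and $K_{i-1}=\sum_{j=1}^{i-1}k_j$. Then the $i$-th block of \eqref{Main} contributes
\[
p^{(1-k_i)K_{i-1}}\,q^{(i+(s-1)K_{i-1})(1-k_i)}\binom{1}{k_i}_{p,q^{s-1}}\prod_{\ell=0}^{k_i-1}\bigl[i+(s-1)K_{i-1}+(s-1)\ell\bigr]_{p,q}=\Psi_{s;p,q}\bigl[k_i,\,i+(s-1)K_{i-1},\,1,\,K_{i-1}\bigr].
\]
In general, the factor in Proposition~\ref{pqrookfor} should read $\Psi_{s;p,q}[k_i,\sum_{j\le i}n_j+(s-1)K_{i-1},m_{i+1},K_{i-1}]$.

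With this order your reconciliation goes through verbatim. Since $\binom{1}{k_i}_{p,q^{s-1}}=0$ for $k_i\ge 2$, the unrestricted composition sum collapses to ${\bf k}\in\{0,1\}^{r-1}$, and the result is exactly \eqref{StirExpr}. As a sanity check, $r=3$, $k=2$ gives $h(q^2+pq+pq^{s+1})$, in agreement with \eqref{Stirlingrecu}. So either prove the corrected statement directly from \eqref{StirExpr}, or state explicitly that the printed argument order must be swapped. As written, the verification you propose would fail.
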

Generalizing \eqref{PQBellpol}, we can define the $(p, q)$-generalized Bell polynomials $B_{s, h; p, q}^{{\bf m}, {\bf n}}(x)$ and $(p, q)$-generalized Bell numbers $B_{s, h; p, q}^{{\bf m}, {\bf n}}$ associated to $S_{s, h; p, q}^{{\bf m}, {\bf n}}[k]$ in a straightforward fashion,
$$
B_{s, h; p, q}^{{\bf m}, {\bf n}}(x):=\sum_{k=m_1}^{|{\bf m}|}S_{s, h;p, q}^{{\bf m}, {\bf n}}[k]x^k, \,\, \,\, B_{s, h; p, q}^{{\bf m}, {\bf n}}:=B_{s, h; p, q}^{{\bf m}, {\bf n}}(1)=\sum_{k=m_1}^{|{\bf m}|}S_{s, h;p, q}^{{\bf m}, {\bf n}}[k].
$$

\section{Conclusion}\label{concl}
In this paper, we considered variables $X, Y$ and $Z_p$ satisfying the $(p, q)$-commutation relation $XY-qYX=hf(Y)Z_p$ with $XZ_p=pZ_pX$ and $Z_pY=pZ_pY$ for a polynomial $f$. Due to the variable $Z_p$ there are three variables which do not commute. However, $Z_p$ ``almost'' commutes with $X$ and $Y$, so the situation is ``close'' to the one where only $X$ and $Y$ exist and satisfy $XY-qYX=hf(Y)$. For most of the results we considered the special case where $f$ is a monomial, $f(Y)=Y^s$ with $s\in \mathbb{N}_0$. In this case, the algebraic structure generated by $X, Y$ and $Z_p$ satisfying the above commutation relations was called $(p,q)$-deformed generalized Weyl algebra. We presented the normal ordering of arbitrary words in the letters $X,Y,Z_p$ and related aspects in the $(p,q)$-deformed generalized Weyl algebra. In particular, the relation to $(p,q)$-deformed generalized Stirling numbers was discussed in detail.

In a sequel \cite{TLM2}, we introduce $(p,q)$-rook numbers and give an interpretation of the normal ordering coefficients of words in the $(p,q)$-deformed generalized Weyl algebra in terms of these $(p,q)$-rook numbers. In further work \cite{TLM3}, the binomial formula for $(X+Y)^n$ is studied in the $(p,q)$-deformed generalized Weyl algebra.


\begin{thebibliography}{99}

\bibitem{AB2010} 
A.N.F. Aleixo and A.B. Balantekin, {\em The ladder operator normal ordering problem for quantum confined systems and the generalization of the Stirling and Bell numbers}, J. Phys. A, Math. Theor. {\bf 43} (2010), Article 045302.

\bibitem{AB2015} 
A.N.F. Aleixo and A.B. Balantekin, {\em Normal ordering for nonlinear deformed ladder operators and the f-generalization of the Stirling and Bell numbers}, J. Math. Phys. {\bf 56} (2015), Article 122108.

\bibitem{ABGB2022}
A. Beaudoin, G. Bergeron, A. Brillant, J Gaboriaud, L. Vinet and A. Zhedanov, \emph{Orthogonal polynomials and the deformed Jordan plane}, J. Math. Anal. Appl. {\bf 507} (2022), Article 125717.

\bibitem{BLO2013} 
G. Benkart, S.A. Lopes and M. Ondrus, {\em A parametric family of subalgebras of the Weyl algebra. II: Irreducible modules}, Recent developments in algebraic and combinatorial aspects of representation theory. Contemp. Math. 602, Amer. Math. Soc., Providence, RI (2013), 73--98.

\bibitem{BLO2015} 
G. Benkart, S.A. Lopes and M. Ondrus, {\em A parametric family of subalgebras of the Weyl algebra. I: Structure and automorphisms}, Trans. Am. Math. Soc. {\bf 367} (2015), 1993--2021.

\bibitem{BLO2015a} 
G. Benkart, S.A. Lopes and M. Ondrus, {\em Derivations of a parametric family of subalgebras of the Weyl algebra}, J. Algebra {\bf 424} (2015), 46--97.

\bibitem{BF2011} 
P. Blasiak and P. Flajolet, {\em Combinatorial models of creation-annihilation}, S{\'e}m. Lothar. Combin. {\bf 65} (2011), Article B65c.

\bibitem{SB2006}
S.C. Blumen, \emph{Two generalisations of the binomial theorem}, Austral. Math. Soc. Gaz. \textbf{33} (2006), 39--43.

\bibitem{BK1994}
I.M. Burban and A.U. Klimyk, \emph{$(P,Q)$-differentiation, $(P,Q)$-integration, and $(P,Q)$-hypergeometric functions related to quantum groups}, Integral Transforms Spec. Funct. {\bf 2} (1994), 15--36. 

\bibitem{Burde2005}
D. Burde, \emph{On the matrix equation $XA-AX = X^p$}, Linear Algebra Appl. {\bf 404} (2005), 147--165. 

\bibitem{CCG2017}
R.O. Celeste, R.B. Corcino and K.J.M. Gonzales, {\em Two approaches to normal order coefficients}, J. Integer Seq. {\bf 20} (2017), Art. 17.3.5.

\bibitem{JG2016}
J. Gaddis, \emph{Two-parameter analogs of the Heisenberg enveloping algebra},  Commun. Algebra \textbf{44} (2016), 4637--4653.

\bibitem{HOT2026}
A. Habib, A. Ouahhabi and E.H. Tahri, \emph{Operator-valued generalized Stirling numbers of the first kind from generalized boson algebra}, Phys. Lett. A {\bf 592} (2026), Article 131948.

\bibitem{Jag1997}
R. Jagannathan, \emph{$(P,Q)$-special functions}, in: Special functions and differential equations. Proceedings of a workshop, WSSF '97, Madras, India, January 13--24, 1997, 158--164.

\bibitem{KK1992}
J. Katriel and M. Kibler, \emph{Normal ordering for deformed boson operators and operator-valued deformed Stirling numbers}, J. Phys. A: Math. Gen. \textbf{25} (1992), 2683--2691

\bibitem{EKLS1993}
E.E. Kirkman and L.W. Small, \emph{$q$-analogs of harmonic oscillators and related rings}, Israel J. Math. \textbf{81} (1993), 111--127.

\bibitem{KHT1995}
H.T. Koelink, \emph{Addition formulas for $q$-special functions}. In: Proceedings Special Functions, $q$-Series and Related Topics, Toronto, June (1995).

\bibitem{KTH1997}
T.H. Koornwinder, \emph{Special functions and $q$-commuting variables}, Fields Inst. Commun. \textbf{14} (1997), 131--166.

\bibitem{YIM1988} 
Y.I. Manin, {\it Quantum Groups and Noncommutive Geometry}, Centre de Recherches Mathématiques, Montréal (1988).

\bibitem{TLM2} 
T. Mansour, L. Oussi and M. Schork, \emph{Normal ordering in the $(p,q)$-deformed generalized Weyl algebra. II: Interpretation in terms of rook placements}, Available at: \url{https://doi.org/10.48550/arXiv.2607.01141}.

\bibitem{TLM3} 
T. Mansour, L. Oussi and M. Schork, \emph{Normal ordering in the $(p,q)$-deformed generalized Weyl algebra. III: The binomial formula}, Available at: \url{https://doi.org/10.48550/arXiv.2607.11693}.

\bibitem{TMMS2011} 
T. Mansour and M. Schork, \emph{The commutation relation $xy=qyx+hf(y)$ and Newton's binomial formula}, Ramanujan J. \textbf{25} (2011), 405--445.

\bibitem{TMMS2016} T. Mansour, M. Schork, {\em Commutation relations, normal ordering, and Stirling numbers}, CRC Press, Boca Raton, FL (2016).

\bibitem{GCMDCH} 
J.C. Mason and D.C. Handscomb, \emph{Chebyshev Polynomials}, Chapman and Hall/CRC, 2003.

\bibitem{OT2024}
A. Ouahhabi and E.H. Tahri, \emph{On generalized normal ordering and generalized Stirling operators}, Phys. Lett. A {\bf 525} (2024), Article 129917.

\bibitem{OT2026}
A. Ouahhabi and E.H. Tahri, \emph{On generalized Bell numbers and normal ordering problem}, Int. J. Geom. Methods Mod. Phys. {\bf 23} (2026), Article 2550285.

\bibitem{LO2020}
L. Oussi, \emph{A $(p, q)$-deformed recurrence for the Bell numbers}, J. Integer Seq. {\bf 23} (2020), Article 20.5.2.

\bibitem{LO2021}
L. Oussi, \emph{$(p, q)$-analogues of the generalized Touchard polynomials and Stirling numbers}, Indag. Math. \textbf{33} (2022), 664--681.

\bibitem{LO2024}
L. Oussi, \emph{A note on the $(p, q)$-derivative operator}, Int. J. Appl. Comput. Math. \textbf{10} (2024), Article 172.

\bibitem{HSAP1950}
H.S.A. Potter, \emph{On the latent roots of quasi-commutative matrices}, Amer. Math. Monthly \textbf{57} (1950), 321--322.

\bibitem{RAS1958}
R.A. Sack, \emph{Taylor’s theorem for shift operators}, Philos. Mag. VIII (1958), 497--503.

\bibitem{JS1978}
J. Sau, \emph{Quantum conjugate momentum of angular momentum modulus}, J. Phys. A. {\bf 11} (1978), 69--79.

\bibitem{MS2021}
M. Schork, {\em Recent developments in combinatorial aspects of normal ordering}, Enumer. Combin. Appl. {\bf 1} (2021), Article S2S2. 

\bibitem{MPS1953}
M.P. Sch\"{u}tzenberger, \emph{Une interpr\'{e}tation de certains solutions de l'\'{e}quation fonctionnelle: $F(x+y) = F(x)F(y)$},  C. R. Acad. Sci. Paris \textbf{236} (1953), 352--353.

\bibitem{AV2005}
A. Varvak, \emph{Rook numbers and the normal ordering problem}, J. Combin. Theory Ser. A. \textbf{112} (2005), 292--307.

\bibitem{MWDW1991}
M. Wachs and D. White, \emph{$p,q$-Stirling numbers and set partition statistics}, J. Combin. Theory Ser. A. \textbf{56} (1991), 27--46.

\bibitem{RMW1967}
R.M. Wilcox, \emph{Exponential operators and parameter differentiation in quantum physics}, J. Math. Phys. \textbf{8} (1967), 962–-982.

\bibitem{WW1974}
W. Witschel, \emph{Ordered operator expansions by comparison}, J. Phys. A. \textbf{8} (1974), 143--154.

\end{thebibliography}
\end{document}